\newcommand{\whp}{whp}%with high probability}
\newcommand{\prob}[1]{\mathbb{P}\left[#1\right]}    %Probability
\newcommand{\condprob}[2]{\mathbb{P}\left[#1 \;\middle|\; #2\right]}
\newcommand{\expec}[1]{\mathbb{E}\left[#1\right]}    %Expectation
\DeclareMathOperator{\po}{Po}
\newcommand{\poisson}[1]{\po\left(#1\right)}
\def\Erdos{Erd\H{o}s}
\def\Renyi{R\'enyi}
\def\Luczak{\L{}uczak}
\def\ER{\Erdos--\Renyi}
\def\GW{Galton--Watson}
\newtheorem{thm}{Theorem}[section]
\newtheorem{coro}[thm]{Corollary}
\newtheorem{lem}[thm]{Lemma}
\theoremstyle{remark}
\theoremstyle{definition}
\newtheorem{definition}[thm]{Definition}
\newtheorem*{construction*}{Construction}
\newcommand{\proofof}[1]{\subsection{Proof of \Cref{#1}}}
\newcommand{\proofofW}[1]{\subsection*{Proof of \Cref{#1}}}
\crefname{thm}{theorem}{theorems}
\crefname{prop}{proposition}{propositions}
\crefname{coro}{corollary}{corollaries}
\crefname{lem}{lemma}{lemmas}
\crefname{definition}{definition}{definitions}
\crefname{question}{question}{questions}
\crefname{problem}{problem}{problems}
\crefname{conjecture}{conjecture}{conjectures}
\newtheoremstyle{claim}%name
{}%Space above
{}%Space below
{\itshape}%Body font
{}%Indent amount
{\bf}%Theorem head font
{.}%Punctuation after theorem head
{.5em}%Space after theorem head
{}%Theorem head spec(can be left empty, meaning ‘normal’)
\theoremstyle{claim}
\crefname{claim}{claim}{claims}
\def\N{\mathbb{N}} %natural numbers
\def\ur{\in_R} %element chosen uniformly at random
\def\forest{F}
\def\forestClass{\mathcal{F}}
\newcommand{\smallo}[1]{o\left(#1\right)}
\newcommand{\bigo}[1]{O\left(#1\right)}
\newcommand{\smallomega}[1]{\omega\left(#1\right)}
\def\planargraph{P}     %planar graph
\def\planarclass{\mathcal{P}} %class of planar graphs
\def\erClass{\mathcal{G}} %class of graphs
\def\nocomplexClass{\mathcal{U}} %class of graphs
\newcommand{\largestcomponent}[1]{L\left(#1\right)} %largest component
\def\Largestcomponent{L} %largest component
\newcommand{\rest}[1]{S\left(#1\right)} %graph without largest component ('rest')
\def\Rest{S} %graph without largest component ('rest')
\newcommand{\vertexSet}[1]{V\left(#1\right)} %vertex set of a graph
\newcommand{\edgeSet}[1]{E\left(#1\right)} %edge set of a graph
\newcommand{\numberVertices}[1]{v\left({#1}\right)} %number of vertices in a graph
\newcommand{\numberEdges}[1]{e\left({#1}\right)} %number of edges in a graph
\DeclareMathOperator{\dist}{dist}
\newcommand{\distance}[3]{\dist_{#1}\left(#2, #3\right)}
\newcommand{\degree}[2]{d_{#2}\left(#1\right)} %degree of a vertex
\newcommand{\core}[1]{C\left(#1\right)} %core
\newcommand{\kernel}[1]{K\left(#1\right)} %kernel
\newcommand{\complexpart}[1]{Q\left(#1\right)} %complex part
\newcommand{\restcomplex}[1]{U\left(#1\right)} %graph without complex part
\def\Restcomplex{U} %graph without complex part
\def\numberVerticesOutside{n_U}
\def\numberEdgesOutside{m_U}
\newcommand{\subdivisionNumber}[2]{S_{#1}\left(#2\right)}
\def\cl{\mathcal{A}} %class of graphs
\def\func{\Phi} %function with domain a set of graphs
\def\seq{\mathbf{a}} %sequence
\def\term{a} %element of the sequence
\newcommand{\condGraph}[2]{#1 \mid #2} %conditional random graph
\def\imageSet{\mathcal{S}} 
\def\root{r} %root of a graph
\def\rootSet{R} %root of a graph
\def\radius{\ell} %root of a graph
\newcommand{\ball}[3]{B_{#1}\left(#2, #3\right)}
\newcommand{\ballP}[2]{B_{#1}\left(#2\right)}
\def\isomorphic{\cong}
\newcommand{\rootedGraph}[2]{\left(#1, #2\right)}
\newcommand{\distributionalLimit}[2]{#1~\xrightarrow{~d~}~#2}
\DeclareMathOperator{\GWT}{GWT}
\newcommand{\gwt}[1]{\GWT\left(#1\right)}
\def\skeletonTree{T_\infty}
\newcommand{\skeletonTreeRays}[1]{T_\infty^{\left(#1\right)}}
\newcommand{\unbiased}[1]{#1_\circ}
\newcommand{\contiguous}[2]{#1 \triangleleft \triangleright #2}
\def\detGraph{W} %a deterministic rooted graph
\def\planeTree{Z} %a deterministic plane tree
\newcommand{\infinitepath}[1]{P_{\infty}^{\left(#1\right)}}
\newcommand{\setbuilder}[2]{\left\{#1 \mid #2\right\}} %set-builder notation
\newcommand{\setbuilderBig}[2]{\big\{#1 \mid #2\big\}}
\newcommand{\symmetricDifference}[2]{#1\triangle#2}
\def\ntrees{t} %number of trees
\title{Local limit of sparse random planar graphs}
\author{Mihyun Kang, Michael Missethan}
\address{Institute of Discrete Mathematics, Graz University of Technology, Steyrergasse 30, 8010 Graz, Austria}
\email{\{kang,missethan\}@math.tugraz.at}
\thanks{Supported by Austrian Science Fund (FWF): W1230}
\keywords{Benjamini--Schramm local weak limit, planar graphs, random graphs, \GW\ tree, Skeleton tree}
\begin{document}

\begin{abstract}
Let $\planargraph(n,m)$ be a graph chosen uniformly at random from the class of all planar graphs on vertex set $\left\{1, \ldots, n\right\}$ with $m=m(n)$ edges. We determine the (Benjamini--Schramm) local weak limit of $\planargraph(n,m)$ in the sparse regime when $m\leq n+\smallo{n\left(\log n\right)^{-2/3}}$. Assuming that the average degree $2m/n$ tends to a constant $c\in[0,2]$ the local weak limit of $\planargraph(n,m)$ is a \GW\ tree with offspring distribution $\poisson{c}$ if $c\leq 1$, while it is the \lq Skeleton tree\rq\ if $c=2$. Furthermore, there is a \lq smooth\rq\ transition between these two cases in the sense that the local weak limit of $\planargraph(n,m)$ is a \lq linear combination\rq\ of a \GW\ tree and the Skeleton tree if $c\in\left(1,2\right)$.
\end{abstract}

\maketitle

\section{Introduction and results}\label{sec:intro}

\subsection{Motivation}\label{sub:motivation}
In 1959 \Erdos\ and \Renyi\ \cite{erdoes1} introduced the so--called \ER\ random graph $G(n,m)$, which is a graph chosen uniformly at random from the class $\mathcal{G}(n,m)$ of all vertex--labelled graphs on vertex set $[n]:=\left\{1, \ldots, n\right\}$ with $m=m(n)$ edges, denoted by $G(n,m)\ur\mathcal{G}(n,m)$. Since then, $G(n,m)$ and the closely related binomial random graph $G(n,p)$ have been intensively studied (see \cite{rg1,rg2,rg3} for an overview). Prominent examples of properties that were considered in $G(n,m)$ are the order of the largest component \cite{erdoes2,luczak_component,bollobas_component}, the length of the longest cycle \cite{luczak_cycles,luczak_pittel_wierman,longCycle}, and the maximum degree \cite{vertices_given_degree,max_degree1,max_degree2}. While these are all \lq global\rq\ properties, also the \lq local\rq\ structure of $G(n,m)$ was studied. For instance, it is well known that if $m=cn/2$ for a constant $c\geq 0$, then $G(n,m)$ locally looks like a \GW\ tree $\gwt{c}$ with offspring distribution $\poisson{c}$. To make that statement more formal, we use the concept of {\em local weak convergence} of random rooted graphs, which was introduced by Benjamini and Schramm \cite{benjamini_schramm} and by Aldous and Steele \cite{objective_method}: A {\em rooted graph} is a pair $\rootedGraph{H}{\root}$, where $H$ is a graph and $\root$ is a vertex of $H$. Two rooted graphs $\rootedGraph{H_1}{\root_1}$ and $\rootedGraph{H_2}{\root_2}$ are {\em isomorphic}, denoted by $\rootedGraph{H_1}{\root_1}\isomorphic\rootedGraph{H_2}{\root_2}$, if there exists an isomorphism of $H_1$ onto $H_2$ which takes $r_1$ to $r_2$. Given a rooted graph $\rootedGraph{H}{\root}$ and $\radius\in\N:=\left\{1, 2, \ldots\right\}$, the ball $\ball{\radius}{H}{\root}$ of radius $\radius$ and centre $\root$ in $H$ is the subgraph of $H$ induced on the vertices with distance at most $\radius$ to $\root$. For each $n\in\N$ let $\rootedGraph{G}{\root}=\rootedGraph{G}{\root}(n)$ be a random rooted graph, that is a random variable which takes rooted graphs as values. We say that another random rooted graph $\rootedGraph{G_0}{\root_0}$ is the {\em local weak limit}, or just the local limit, of $\rootedGraph{G}{\root}$, denoted by $\distributionalLimit{\rootedGraph{G}{\root}}{\rootedGraph{G_0}{\root_0}}$, if for each fixed rooted graph $\rootedGraph{H}{\root_H}$ and $\radius\in\N$ we have
\begin{align*}
\prob{\ball{\radius}{G}{\root}\isomorphic\rootedGraph{H}{\root_H}}~\to~ \prob{\ball{\radius}{G_0}{\root_0}\isomorphic\rootedGraph{H}{\root_H}} \text{~~~~as~~~~} n\to \infty.
\end{align*}
In the literature the local weak limit is also called the distributional limit or Benjamini--Schramm local weak limit. We can consider the \ER\ random graph $G=G(n,m)$ as a random rooted graph $\rootedGraph{G}{\root}$ by assuming that given the realisation $H$ of $G$ the root $\root$ is chosen uniformly at random from the vertices of $H$, denoted by $r\ur\vertexSet{G}$. Using the notions introduced above we can describe the local structure of $G$ as follows.
\begin{thm}\label{thm:local_er}
Let $G=G(n,m)\ur\mathcal{G}(n,m)$ be the \ER\ random graph, $\root=\root(n)\ur \vertexSet{G}$, $c\geq 0$, and $m=m(n)$ such that $2m/n\to c$ as $n \to \infty$. Then $\distributionalLimit{\rootedGraph{G}{\root}}{\gwt{c}}$.
\end{thm}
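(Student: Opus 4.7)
The plan is to prove the local convergence by exploring the ball around the root via breadth--first search (BFS) and showing that, at each step, the revealed offspring numbers are well approximated in distribution by $\poisson{c}$, so that the BFS process converges in distribution to the analogous exploration of $\gwt{c}$.

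First, by the very definition of local weak convergence it suffices to fix $\radius\in\N$ and a finite rooted graph $\rootedGraph{H}{\root_H}$, and to establish
\[
\probLarge{\ball{\radius}{G}{\root}\isomorphic\rootedGraph{H}{\root_H}}~\to~\probLarge{\ball{\radius}{T}{\root_0}\isomorphic\rootedGraph{H}{\root_H}},
\]
where $\rootedGraph{T}{\root_0}=\gwt{c}$. Using the standard contiguity between $G(n,m)$ and the binomial random graph $G(n,p)$ with $p=c/n+\smallo{1/n}$, it is enough to prove the convergence for $G(n,p)$; by vertex--label symmetry the uniformly chosen root $\root$ may then be replaced by the fixed vertex~$1$.

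Next, I would reveal $\ball{\radius}{G}{1}$ in $G(n,p)$ by BFS, level by level, starting at vertex~$1$. If $v$ is a frontier vertex and $\Rest$ is the set of already--explored vertices, then the unexplored neighbours of $v$ form a $\mathrm{Bin}(n-|\Rest|,p)$ random variable, whose total--variation distance to $\poisson{c}$ tends to $0$ as $n\to\infty$ as long as $|\Rest|=\smallo{n}$; meanwhile, the corresponding vertex in $\rootedGraph{T}{\root_0}$ has exactly $\poisson{c}$ children. Iterating over $\radius$ generations, the expected size of the explored set is $\bigo{c^{\radius}}$, so the BFS never leaves the regime $|\Rest|=\Op{1}$ and the joint level--by--level offspring sequence in $G(n,p)$ converges in distribution to the first $\radius$ generations of $\gwt{c}$.

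Two items remain. If $\rootedGraph{H}{\root_H}$ contains a cycle, the right--hand side is $0$ because $\gwt{c}$ is almost surely a tree; the left--hand side is bounded by the probability that the BFS creates a cycle, which by a union bound over pairs of explored vertices is at most $\binom{|\Rest|}{2}p=\Op{c^{2\radius}/n}=\smallo{1}$. When $H$ is a tree, one must additionally account for the constraint that every vertex at depth strictly less than $\radius$ has no further neighbour outside the revealed set; this matches the Galton--Watson event $\poisson{c}=0$, of probability $e^{-c}$, via the Poisson limit of $(1-p)^{n-|\Rest|}$. Symmetry over vertex labels in $G(n,p)$ and over orderings of children in $\gwt{c}$ takes care of the automorphism factor of $\rootedGraph{H}{\root_H}$. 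I expect the main technical point to be quantifying the total--variation error in the Poisson approximation uniformly over the $\bigo{c^{\radius}}$ BFS steps and showing that no cycle is created; however, since $\radius$ and $c$ are fixed, classical Binomial--to--Poisson estimates together with the union bound above comfortably suffice.
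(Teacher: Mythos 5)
Your approach is genuinely different from the paper's. You reduce to the binomial model $G(n,p)$, run a breadth--first exploration, and couple the offspring counts with $\poisson{c}$; the paper instead proves the stronger counting statement \Cref{lem:local_er} directly in $G(n,m)$ by exact enumeration together with the first and second moment method, passing through plane trees (\Cref{lem:local_er2}) to sidestep all automorphism bookkeeping. Your route is the standard exploration argument, as in the reference the paper cites for \Cref{thm:local_er}; the paper's route is self--contained for $G(n,m)$ and delivers \Cref{lem:local_er}, which is then reused to treat the non--complex part.

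Two points need tightening. First, $G(n,m)$ and $G(n,p)$ with $p\approx m/\binom{n}{2}$ are \emph{not} contiguous in the usual probabilistic sense: the event that the graph has exactly $m$ edges has probability $1$ under $G(n,m)$ and $\Th{n^{-1/2}}\to 0$ under $G(n,p)$. What you actually need is asymptotic equivalence of the two models for the specific local event $\{\ball{\radius}{G}{1}\isomorphic\rootedGraph{H}{\root_H}\}$. This does hold, but it requires an argument --- for example, one can show that incrementing $m$ by one changes $\prob{\ball{\radius}{G(n,m)}{1}\isomorphic\rootedGraph{H}{\root_H}}$ by only $\bigo{\expec{\numberVertices{\ball{\radius-1}{G}{1}}}/n}$, so that the $\Th{\sqrt{n}}$ fluctuation of the edge count in $G(n,p)$ induces only an $\smallo{1}$ discrepancy; alternatively, prove the high--probability counting statement (the analogue of \Cref{lem:local_er}) in $G(n,p)$ and invoke the standard transfer of events holding with probability $1-\smallo{n^{-1/2}}$. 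As written, this reduction is merely asserted. Second, the remark that ``symmetry over vertex labels and over orderings of children'' absorbs the automorphism factor of $\rootedGraph{H}{\root_H}$ is slippery once $H$ has nontrivial symmetries; the paper resolves this cleanly by reducing to plane trees in \Cref{lem:local_er2}, which eliminates the bookkeeping entirely. Neither issue is fatal, but both must be filled in for a complete proof.
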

A proof of \Cref{thm:local_er} can be found for example in \cite{local_limit_er_proof}. Another random graph for which the local structure is well understood is the random tree $T=T(n)$, which is a graph chosen uniformly at random from the class of all vertex--labelled trees on vertex set $[n]$. Grimmett \cite{limit_random_tree} showed that the local limit of $T$ is the {\em Skeleton tree} $\skeletonTree$, which can be obtained by connecting independent \GW\ trees $T_1, T_2, \ldots$ with offspring distribution $\poisson{1}$ and roots $\root_1, \root_2, \ldots$ via edges $\root_1\root_2, \root_2\root_3, \ldots$ and considering $\root_1$ as the root of $\skeletonTree$. We refer to \Cref{fig:skeleton_tree} for an illustration of $\skeletonTree$ and note that further results on limits of random trees can be found for example in \cite{tree_limit1,tree_limit2,continuum_random_tree}.
\begin{figure}[t]
	\begin{tikzpicture}[scale=1, line width=0.4mm, every node/.style={circle,fill=gray, inner sep=0, minimum size=0.22cm}]
		\node (1) at (0,0) [rectangle,draw]  {};
		\node (1a) at (0,-0.4) [draw=none, fill=none]  {$\root_1$};
		\node (2) at (1,0) [draw]  {};
		\node (2a) at (1,-0.4) [draw=none, fill=none]  {$\root_2$};
		\node (3) at (2,0) [draw]  {};
		\node (3a) at (2,-0.4) [draw=none, fill=none]  {$\root_3$};
		\node (4) at (3,0) [draw]  {};
		\node (4a) at (3,-0.4) [draw=none, fill=none]  {$\root_4$};
		\node (5) at (4,0) [draw]  {};
		\node (5a) at (4,-0.4) [draw=none, fill=none]  {$\root_5$};
		\node (6) at (5,0) [draw]  {};
		\node (6a) at (5,-0.4) [draw=none, fill=none]  {$\root_6$};
		\node (7) at (6,0) [draw]  {};
		\node (7a) at (6,-0.4) [draw=none, fill=none]  {$\root_7$};
		\node (8) at (7,0) [draw]  {};
		\node (8a) at (7,-0.4) [draw=none, fill=none]  {$\root_8$};
		\draw[-] (1) to (2);
		\draw[-] (2) to (3);
		\draw[-] (3) to (4);
		\draw[-] (4) to (5);
		\draw[-] (5) to (6);
		\draw[-] (6) to (7);
		\draw[-] (7) to (8);
		
		\node (9) at (0,0.7) [draw]  {};
		\draw[-] (1) to (9);
		\node (10) at (0.7,0.7) [draw]  {};
		\node (11) at (1.3,0.7) [draw]  {};
		\draw[-] (2) to (10);
		\draw[-] (2) to (11);
		\node (12) at (0.2,1.4) [draw]  {};
		\node (13) at (0.7,1.4) [draw]  {};
		\node (14) at (1.2,1.4) [draw]  {};
		\draw[-] (10) to (12);
		\draw[-] (10) to (13);
		\draw[-] (10) to (14);
		\node (15) at (0.2,2.1) [draw]  {};
		\node (16) at (0.9,2.1) [draw]  {};
		\node (17) at (1.5,2.1) [draw]  {};
		\draw[-] (12) to (15);
		\draw[-] (14) to (16);
		\draw[-] (14) to (17);
		\node (18) at (-0.1,2.8) [draw]  {};
		\node (19) at (0.5,2.8) [draw]  {};	
		\draw[-] (15) to (18);
		\draw[-] (15) to (19);
		\node (20) at (0.5,3.5) [draw]  {};
		\draw[-] (19) to (20);	
		
		\node (21) at (5.6,0.7) [draw]  {};
		\node (22) at (6,0.7) [draw]  {};
		\node (23) at (6.4,0.7) [draw]  {};
		\draw[-] (7) to (21);	
		\draw[-] (7) to (22);	
		\draw[-] (7) to (23);
		\node (24) at (5.3,1.4) [draw]  {};
		\node (25) at (5.9,1.4) [draw]  {};
		\node (26) at (6.4,1.4) [draw]  {};
		\draw[-] (21) to (24);	
		\draw[-] (21) to (25);	
		\draw[-] (23) to (26);
		\node (27) at (5.3,2.1) [draw]  {};
		\node (28) at (6,2.1) [draw]  {};
		\node (29) at (6.4,2.1) [draw]  {};
		\node (30) at (6.8,2.1) [draw]  {};
		\draw[-] (24) to (27);	
		\draw[-] (26) to (28);	
		\draw[-] (26) to (29);
		\draw[-] (26) to (30);	
		\node (31) at (5,2.8) [draw]  {};
		\node (32) at (5.6,2.8) [draw]  {};
		\node (33) at (6.4,2.8) [draw]  {};
		\node (34) at (6.8,2.8) [draw]  {};
		\draw[-] (27) to (31);	
		\draw[-] (27) to (32);	
		\draw[-] (29) to (33);
		\draw[-] (30) to (34);
		\node (35) at (5,3.5) [draw]  {};
		\node (36) at (6.2,3.5) [draw]  {};
		\node (37) at (6.6,3.5) [draw]  {};
		\draw[-] (31) to (35);	
		\draw[-] (33) to (36);	
		\draw[-] (33) to (37);
		\node (38) at (5,4.2) [draw]  {};
		\node (39) at (6.2,4.2) [draw]  {};
		\draw[-] (35) to (38);	
		\draw[-] (36) to (39);	
		
		\node (40) at (4,0.7) [draw]  {};
		\node (41) at (3.7,1.4) [draw]  {};
		\node (42) at (4.3,1.4) [draw]  {};
		\node (43) at (3.7,2.1) [draw]  {};	
		\draw[-] (5) to (40);	
		\draw[-] (40) to (41);	
		\draw[-] (40) to (42);	
		\draw[-] (41) to (43);	
		
		\node (44) at (3,0.7) [draw]  {};
		\node (45) at (3,1.4) [draw]  {};	
		\draw[-] (4) to (44);	
		\draw[-] (44) to (45);	
		
		\node (46) at (7.8,0) [draw=none, fill=none]  {};
		\draw[->] (8) to (46);
		\node (47) at (8.2,1) [draw=none, fill=none]  {\Huge $\ldots$};
	\end{tikzpicture}
	\caption{The Skeleton Tree $\skeletonTree$ with root $\root=\root_1$.}
	\label{fig:skeleton_tree}
\end{figure}
\begin{thm}[\cite{limit_random_tree}]\label{thm:limit_tree1}
Let $T=T(n)$ be a random tree and $\root=\root(n)\ur\vertexSet{T}$. Then $\distributionalLimit{\rootedGraph{T}{\root}}{\skeletonTree}$.
\end{thm}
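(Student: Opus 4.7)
To prove \Cref{thm:limit_tree1} I verify, for every fixed rooted graph $\rootedGraph{H}{\root_H}$ (necessarily a tree) and every $\radius\in\N$, that $\prob{\ball{\radius}{T}{\root}\isomorphic\rootedGraph{H}{\root_H}}$ converges to $\prob{\ball{\radius}{\skeletonTree}{\root_1}\isomorphic\rootedGraph{H}{\root_H}}$. The plan is to compute both sides in closed form: the left hand side by a direct combinatorial count via Cayley's formula, and the right hand side by the explicit recursive decomposition of $\skeletonTree$.

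For the left hand side I exploit label symmetry to fix $\root=1$. Set $k=\numberVertices{H}$ and let $s$ be the number of vertices of $H$ at distance exactly $\radius$ from $\root_H$. The count of trees $T$ on $[n]$ with $\ball{\radius}{T}{1}\isomorphic\rootedGraph{H}{\root_H}$ factors as the number of labelled copies of $\rootedGraph{H}{\root_H}$ in $[n]$ sending $\root_H$ to $1$, namely $(n-1)!/\big((n-k)!\,|\operatorname{Aut}(H,\root_H)|\big)$, times the number of tree extensions. Since extending without enlarging the ball of radius $\radius$ forces all new edges to attach to the frontier, each extension corresponds to a spanning forest on the remaining $n-k$ vertices together with the $s$ frontier vertices as distinguished roots lying in distinct components, and the standard generalisation of Cayley's formula counts these as $s(n-k+s)^{n-k-1}$. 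Dividing by $n^{n-1}$ and using $(n)_k\sim n^k$ together with $(n-k+s)^{n-k-1}\sim n^{n-k-1}e^{s-k}$ yields
\[
\prob{\ball{\radius}{T}{\root}\isomorphic\rootedGraph{H}{\root_H}}~\longrightarrow~\frac{s\cdot e^{s-k}}{|\operatorname{Aut}(H,\root_H)|}.
\]

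For the right hand side I decompose $\ball{\radius}{\skeletonTree}{\root_1}$ into the spine segment $\root_1,\ldots,\root_{\radius+1}$ together with independent $\poisson{1}$ \GW\ subtrees $T_i$ truncated at depth $\radius-i+1$ attached at $\root_i$. Each candidate frontier vertex $v^*\in V(H)$ playing the role of $\root_{\radius+1}$ fixes a spine in $H$ (the unique $\root_H$--$v^*$ path) and partitions $H$ into rooted subtrees $D_1^{v^*},\ldots,D_{\radius+1}^{v^*}$. Using the truncated-\GW\ probability $\prob{\text{truncation at depth }d\text{ of a }\poisson{1}\text{ \GW\ tree}\isomorphic D}=e^{-(|D|-s_D)}/|\operatorname{Aut}(D)|$ (with $s_D$ the number of depth-$d$ vertices of $D$) together with the orbit--stabiliser identity $\prod_i|\operatorname{Aut}(D_i^{v^*})|=|\operatorname{Aut}(H,\root_H)|/|\operatorname{orbit}(v^*)|$ (valid because any root-preserving automorphism fixing $v^*$ must fix the unique $\root_H$--$v^*$ path pointwise), each orbit $\omega$ of frontier vertices contributes $e^{s-k}|\omega|/|\operatorname{Aut}(H,\root_H)|$ to the event probability, and summing over orbits collapses via $\sum_\omega|\omega|=s$ to exactly $s\cdot e^{s-k}/|\operatorname{Aut}(H,\root_H)|$, matching the left hand side.

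The main obstacle is the bookkeeping in this final matching: recognising that two candidate spine endpoints yield the same event in $\skeletonTree$ precisely when they share an orbit of $\operatorname{Aut}(H,\root_H)$, and correctly tracking the automorphism factors of the subtrees $D_i^{v^*}$. A conceptually cleaner but less elementary route is to invoke the classical equivalence between a uniformly random rooted labelled tree on $[n]$ and a $\poisson{1}$ \GW\ tree conditioned on total size $n$, together with Kesten's theorem identifying the local weak limit of a conditioned critical \GW\ tree from a uniformly random vertex as precisely $\skeletonTree$.
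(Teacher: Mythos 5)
The paper does not prove this theorem; it is cited verbatim from Grimmett, so there is no internal proof to compare against. I therefore evaluate your argument on its own merits, and I believe it is correct and in the same combinatorial spirit as Grimmett's original.

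Your left-hand computation is correct, modulo a small notational inconsistency: having fixed $\root=1$, the denominator should be $n^{n-2}$ (Cayley) and the embedding count is $(n-1)!/(n-k)!\sim n^{k-1}$, not $(n)_k\sim n^k$ with denominator $n^{n-1}$; you appear to have switched mid-calculation to the ``count pairs $(T,\root)$'' normalisation, which also gives the right answer, but you should commit to one convention. The generalised Cayley formula $s(n-k+s)^{n-k-1}$ for forests with $s$ distinguished roots in distinct components is the right one, and the asymptotics yield $s\,e^{s-k}/|\operatorname{Aut}(H,\root_H)|$ as you state. On the right-hand side, the truncated-Galton--Watson formula $e^{-(|D|-s_D)}/|\operatorname{Aut}(D)|$ is correct (the exponent counts vertices of $D$ at depth strictly below the truncation level, and the $\prod c_v!$ from plane-tree multiplicity cancels against the Poisson factorials), the identity $\sum_i s_{D_i^{v^*}}=s$ and $\sum_i|D_i^{v^*}|=k$ hold because the spine decomposition is vertex-disjoint and distances in a tree add along the unique path, and the automorphism identity $\prod_i|\operatorname{Aut}(D_i^{v^*})|=|\operatorname{Stab}(v^*)|=|\operatorname{Aut}(H,\root_H)|/|\omega|$ follows exactly as you argue. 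The one step you state but do not justify in full is that the events indexed by distinct orbits of frontier vertices are disjoint: this requires observing that if $D_i^{v^*}\isomorphic D_i^{w^*}$ for every $i$, then gluing the isomorphisms along the two spines yields a root-preserving automorphism of $H$ sending $v^*$ to $w^*$; that is a short argument worth writing out. Your alternative route via the conditioned-Galton--Watson / Kesten-tree picture is a reasonable shortcut, though one must be careful that the version of Kesten's theorem invoked concerns the ball around a uniformly random vertex (size-biased re-rooting) rather than around the tree's own root, as these limits differ.
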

In the last decades random planar graphs and, more generally, random graphs on surfaces have attracted attention \cite{planar,planar1,planar2,planar3,planar4,planar5,mcd,surface,surface1}. In this paper we investigate the local structure of $\planargraph(n,m)$, which is a graph chosen uniformly at random from the class $\planarclass(n,m)$ of all vertex--labelled planar graphs on vertex set $[n]$ with $m=m(n)$ edges, denoted by $\planargraph(n,m)\ur\planarclass(n,m)$. In particular, we determine the local weak limit of $\planargraph(n,m)$, in the flavour of \Cref{thm:local_er}. We note that there are results on similar types of convergence in related structures known, like random planar maps \cite{local_maps}, random outerplanar graphs \cite{local_outerplanar}, and $n$--vertex random planar graphs \cite{local_n_vertex}.

\subsection{Main results}\label{sub:main}
Throughout the paper, we use the standard Landau notations for asymptotic orders and all asymptotics are taken as $n\to\infty$. We investigate the local weak limit of $\planargraph(n,m)$ in the sparse regime when $m\leq n+\smallo{n\left(\log n\right)^{-2/3}}$ and distinguish four cases depending on how large the number of edges $m$ is. The first regime is when $m\leq n/2+\bigo{n^{2/3}}$, where $\planargraph(n,m)$ behaves very similarly as the \ER\ random graph $G(n,m)$ and has therefore also a \GW\ tree as its local limit.
\begin{thm}\label{thm:main1}
Let $\planargraph=\planargraph(n,m)\ur\planarclass(n,m)$, $\root=\root(n)\ur \vertexSet{\planargraph}$, and $m=m(n)\leq n/2+\bigo{n^{2/3}}$ such that $2m/n \to c\in[0,1]$. Then $\distributionalLimit{\rootedGraph{\planargraph}{\root}}{\gwt{c}}$.	
\end{thm}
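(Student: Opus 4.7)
The plan is to transfer the local limit result for the \ER\ random graph (\Cref{thm:local_er}) to $\planargraph(n,m)$ by exploiting that in the sparse regime $m\leq n/2+\bigo{n^{2/3}}$ the only obstruction to planarity is confined to a tiny \lq complex\rq\ part of $G(n,m)$. First I would collect standard estimates on the critical random graph $G=G(n,m)\ur\erClass(n,m)$: in this regime the complex part $\complexpart{G}$ (the union of components with at least two independent cycles) \whp\ contains $\smallo{n}$ vertices and has bounded kernel, hence $\complexpart{G}$ is planar with probability bounded away from zero (and tending to one when $c<1$). Since forests and unicyclic components are always planar, $\{G\in\planarclass(n,m)\}=\{\complexpart{G}\text{ is planar}\}$, and consequently $\prob{G\in\planarclass(n,m)}\geq p_0$ for some constant $p_0>0$.

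Next, I would exploit the following structural decomposition. By uniformity, $\planargraph(n,m)$ has the same law as $G(n,m)$ conditioned on the planarity event above. For any fixed labelled complex part $Q$, the conditional law of the non--complex remainder given $\complexpart{\cdot}=Q$ is the uniform distribution over complex--free graphs on $[n]\setminus\vertexSet{Q}$ with $m-\numberEdges{Q}$ edges, \emph{identically} under $G(n,m)$ and under $\planargraph(n,m)$. The planarity conditioning only reweights the marginal law of the complex part; everything outside is untouched.

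With this decomposition in hand, I would analyse $\ball{\radius}{\planargraph}{\root}$ for $\root\ur\vertexSet{\planargraph}$ and a fixed radius $\radius$. Since $\complexpart{\planargraph}$ has $\smallo{n}$ vertices \whp\ and non--complex components are small in this subcritical/critical regime, a standard neighbourhood--expansion bound shows that with probability $1-\smallo{1}$ the ball $\ball{\radius}{\planargraph}{\root}$ is disjoint from $\complexpart{\planargraph}$. The same holds for $G(n,m)$, and by the decomposition above these two balls share the same conditional distribution given the non--complex vertex and edge counts. Combining this with \Cref{thm:local_er} then yields the claimed convergence to $\gwt{c}$.

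The main obstacle I anticipate is making the conditional decomposition fully rigorous: one needs a clean enumerative statement that, conditional on the complex part as a labelled subgraph, the remainder is uniform over complex--free graphs with the appropriate parameters, together with quantitative \Luczak--type control of $\complexpart{G}$ that survives the planarity conditioning. Once this conditional uniformity is nailed down, the remaining steps reduce to concentration bounds for $\numberVertices{\complexpart{G}}$ and a routine union bound over bounded--size neighbourhoods of a uniform root.
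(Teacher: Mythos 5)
Your proposal is on the right track conceptually but takes a substantially longer route than the paper, and it reconstructs machinery (the complex/non--complex decomposition, conditional uniformity of the remainder) that the paper reserves for the harder supercritical regimes of \Cref{thm:main2,thm:main3,thm:main4}. For \Cref{thm:main1} the paper does something much shorter. Britikov's result (\Cref{thm:non_complex}) gives $\liminf_n\prob{G(n,m)\text{ has no complex component}}>0$, and since a graph without complex components is automatically planar this yields $\liminf_n\prob{G(n,m)\text{ planar}}>0$. Because $\planargraph(n,m)$ is $G(n,m)$ conditioned on an event of probability bounded away from zero, \emph{every whp property of $G(n,m)$ transfers automatically to $\planargraph(n,m)$}. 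The crucial ingredient that makes this one--line transfer sufficient is \Cref{lem:local_er}, which is a whp statement about the whole graph (the fraction of vertices $v$ with $\ball{\radius}{G}{v}\isomorphic\detGraph$ concentrates at $\prob{\ballP{\radius}{\gwt{c}}\isomorphic\detGraph}$), not merely a statement about a randomly chosen root. Transferring this whp concentration to $\planargraph$ and then taking expectation over the uniform root immediately gives $\distributionalLimit{\rootedGraph{\planargraph}{\root}}{\gwt{c}}$.

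Your plan, by contrast, fixes the complex part, argues conditional uniformity of the remainder, shows the root's ball avoids the complex part whp, and then tries to invoke \Cref{thm:local_er}. There is a subtle gap here: after conditioning, the remainder is a uniform complex--free graph $U(n_U,m_U)$, so what you actually need is the local limit of $U(n_U,m_U)$, not of $G(n,m)$. That limit is itself obtained via the same Britikov transfer (this is exactly the paper's \Cref{coro:local_no_complex}), so your argument would end up applying the transfer twice, once buried inside the conditional step and once to control the marginal of $(n_U,m_U)$ under the planarity conditioning. Your proposal also leans on \Cref{thm:local_er} directly, but that is a convergence--of--probabilities statement about a random root and does not transfer across conditioning on a probability-$p_0$ event; you need the whp vertex--counting form (\Cref{lem:local_er}) for the transfer to go through. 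So the conclusion is reachable along your route, but the decomposition machinery is unnecessary for \Cref{thm:main1}, and the precise version of the \ER\ local--structure statement you invoke needs to be strengthened to the whp counting form before anything transfers.
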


Kang and \Luczak\ \cite{planar} showed that if $m\geq n/2+\smallomega{n^{2/3}}$, then with high probability (meaning with probability tending to 1 as $n$ tends to infinity, {\em \whp} for short) the largest component of $\planargraph=\planargraph(n,m)$ has significantly more vertices than the second largest component. Therefore, we determine in the following the local limit not only for $\planargraph$, but also for the largest component $\largestcomponent{\planargraph}$ of $\planargraph$ and the remaining part $\rest{\planargraph}:=\planargraph\setminus\largestcomponent{\planargraph}$, which we call (the union of) the small components. We write $\root_L\ur\vertexSet{\Largestcomponent}$ (and $\root_\Rest\ur\vertexSet{\Rest}$) for a vertex that is obtained by first picking the realisation $H$ of $\planargraph$ and then choosing uniformly at random a vertex from the largest component $\largestcomponent{H}$ of $H$ (and the small components $\rest{H}$, respectively). The next case is when $m\geq n/2+\smallomega{n^{2/3}}$, but the average degree $2m/n$ still tends to 1.
\begin{thm}\label{thm:main2}
Let $\planargraph=\planargraph(n,m)\ur\planarclass(n,m)$, $\Largestcomponent=\largestcomponent{P}$ be the largest component of $P$, and $\Rest=\rest{\planargraph}=\planargraph\setminus\Largestcomponent$. Assume $m=m(n)=n/2+s$ for $s=s(n)>0$ such that $s=\smallo{n}$ and $s^3n^{-2}\to\infty$. Moreover, let $\root=\root(n)\ur\vertexSet{\planargraph}$, $\root_L=\root_L(n)\ur\vertexSet{\Largestcomponent}$, and $\root_\Rest=\root_\Rest(n)\ur\vertexSet{\Rest}$. Then we have
\begin{enumerate}[label=\normalfont(\roman*)]
\item\label{thm:main2A} $\distributionalLimit{\rootedGraph{\Largestcomponent}{\root_L}}{\skeletonTree}$;
\item\label{thm:main2B}
$\distributionalLimit{\rootedGraph{\Rest}{\root_\Rest}}{\gwt{1}}$;
\item\label{thm:main2C}
$\distributionalLimit{\rootedGraph{\planargraph}{\root}}{\gwt{1}}$.
\end{enumerate}
\end{thm}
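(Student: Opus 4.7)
My plan is to first establish a structural description of $\planargraph$ in this barely supercritical regime and then reduce each of (i), (ii), (iii) to a known local limit theorem: \Cref{thm:limit_tree1} (Grimmett) for convergence to $\skeletonTree$, and \Cref{thm:main1} for convergence to $\gwt{1}$. The key structural input I expect to prove, adapting the analysis of Kang and \Luczak\ \cite{planar}, is that with high probability
\begin{align*}
	\numberVertices{\Largestcomponent} = (4+o(1))s \qquad\text{and}\qquad \numberEdges{\Largestcomponent} - \numberVertices{\Largestcomponent} + 1 = o(s).
\end{align*}
Since the excess (first Betti number) of $\Largestcomponent$ is $o(s)$, its kernel $\kernel{\Largestcomponent}$, obtained from the 2-core $\core{\Largestcomponent}$ by suppressing degree-2 vertices, has $o(s)$ vertices, as its minimum degree is at least $3$. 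The 2-core is recovered from the kernel by subdividing each edge into a path, and the whole of $\Largestcomponent$ by attaching pendant trees at the vertices of the 2-core. A complementary ingredient is the standard fact that, conditional on $\vertexSet{\Largestcomponent}$ and $\numberEdges{\Largestcomponent}$, $\Largestcomponent$ is uniformly distributed among connected planar graphs on $\vertexSet{\Largestcomponent}$ with $\numberEdges{\Largestcomponent}$ edges.

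\textbf{Part (i).} Since $\numberVertices{\kernel{\Largestcomponent}} = o(\numberVertices{\Largestcomponent})$, the uniformly chosen root $\root_L$ lies at distance greater than $\radius$ from every vertex of $\kernel{\Largestcomponent}$ with probability $1-o(1)$, for every fixed $\radius\in\N$. On that event, $\ball{\radius}{\Largestcomponent}{\root_L}$ is determined entirely by a segment of the subdivision path of $\core{\Largestcomponent}$ containing $\root_L$ together with the pendant trees attached to the $2\radius+1$ path vertices closest to $\root_L$. A direct counting argument (using Cayley's formula, since planarity is governed by the planar kernel alone while pendant trees are always planar) shows that, conditional on the kernel structure, subdivision path lengths, and sizes and attachment points of the pendant trees, these pendant trees are independent uniform random rooted labelled trees of their prescribed sizes. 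Applying \Cref{thm:limit_tree1}, which identifies the local limit of a large uniform random tree rooted at a uniform vertex as $\skeletonTree$, then yields $\distributionalLimit{\rootedGraph{\Largestcomponent}{\root_L}}{\skeletonTree}$.

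\textbf{Parts (ii) and (iii), and main obstacle.} For (ii), note that $\numberVertices{\Rest} = (1-o(1))n$ and $\numberEdges{\Rest} = m - \numberEdges{\Largestcomponent} = n/2 - 3s + o(s)$, so $2\numberEdges{\Rest}/\numberVertices{\Rest}\to 1$ and $\numberEdges{\Rest}\leq\numberVertices{\Rest}/2 + \bigo{\numberVertices{\Rest}^{2/3}}$. Conditional on $\vertexSet{\Rest}$ and $\numberEdges{\Rest}$, $\Rest$ is uniform among planar graphs on $\vertexSet{\Rest}$ with $\numberEdges{\Rest}$ edges, subject only to the whp vacuous constraint that no component exceeds $\numberVertices{\Largestcomponent}$. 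Hence $\Rest$ is asymptotically distributed like $\planargraph(\numberVertices{\Rest},\numberEdges{\Rest})$ in the regime of \Cref{thm:main1}, which gives $\distributionalLimit{\rootedGraph{\Rest}{\root_\Rest}}{\gwt{1}}$. For (iii), since $\numberVertices{\Rest}/n \to 1$ whp, a uniform root $\root$ of $\planargraph$ lies in $\Rest$ with probability $1-o(1)$; because $\Rest$ consists of complete components of $\planargraph$, $\ball{\radius}{\planargraph}{\root}$ coincides on that event with $\ball{\radius}{\Rest}{\root_\Rest}$, so (iii) follows from (ii). The chief obstacle is (i): obtaining the quantitative structural description of $\Largestcomponent$ and rigorously justifying the conditional uniformity of the pendant trees require precise enumerative control over connected labelled planar graphs of given order and small excess, and constitute the technical heart of the argument.
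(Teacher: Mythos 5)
Your overall plan — establish a structural decomposition of $\planargraph$, then reduce (i) to Grimmett's theorem and (ii) to an Erd\H{o}s--R\'enyi local limit — is the same strategy the paper uses. The paper works with the complex/non--complex decomposition $\planargraph = \complexpart{\planargraph} \cup \restcomplex{\planargraph}$ rather than your $\Largestcomponent/\Rest$ split, and this is a genuinely cleaner choice: conditional on its core and vertex count, $\complexpart{\planargraph}$ is exactly a uniform complex graph with that core, and conditional on $(\numberVerticesOutside, \numberEdgesOutside)$, $\restcomplex{\planargraph}$ is exactly a uniform graph with no complex components — no tie--breaking or ``no component exceeds $\numberVertices{\Largestcomponent}$'' constraints to wave away. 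The paper then transfers to $\Largestcomponent$ and $\Rest$ by contiguity, using $\largestcomponent{\planargraph} = \largestcomponent{\complexpart{\planargraph}}$ whp and $\numberVertices{\rest{\complexpart{\planargraph}}} = \smallo{\numberVertices{\complexpart{\planargraph}}}$, $\smallo{\numberVerticesOutside}$ whp. Your $\Largestcomponent/\Rest$ route can be made to work, but you incur exactly the two issues you would have to police by hand.

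There is, however, a genuine gap in part (i), and it is the kind of slip that, if carried through, gives the \emph{wrong} local limit. You argue that $\root_L$ lies far from the \emph{kernel}, and then describe $\ball{\radius}{\Largestcomponent}{\root_L}$ as ``a segment of the subdivision path of $\core{\Largestcomponent}$ containing $\root_L$ together with the pendant trees attached to the $2\radius+1$ path vertices closest to $\root_L$.'' If $\root_L$ really sat on a subdivision path with trees hanging off both ways, the local limit would be $\skeletonTreeRays{2}$, the Skeleton tree with \emph{two} rays (the paper proves precisely this for a root chosen uniformly from the core: $\distributionalLimit{\planargraph_C}{\skeletonTreeRays{2}}$ in \Cref{thm:local_core}) — not $\skeletonTree = \skeletonTreeRays{1}$. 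What actually happens: since $\numberVertices{\core{\planargraph}} = \smallo{\numberVertices{\complexpart{\planargraph}}}$, whp $\root_L$ is \emph{not on the core at all} but deep inside one of the pendant trees $T'$, and you need $\root_L$ to be far from the \emph{root} of $T'$ (its core attachment vertex), not merely far from the kernel. ``Far from the kernel'' does not imply this, and is not even immediate from $\numberVertices{\kernel{\Largestcomponent}} = \smallo{\numberVertices{\Largestcomponent}}$ without a degree/expansion argument. The paper establishes the needed ``far from the core'' statement via \Cref{lem:limit_forest}\ref{lem:limit_forest2}, which in turn rests on the fact (\Cref{thm:limit_tree2}) that the distance between a uniform random vertex and the root of a uniform random $k$--vertex tree is of order $k^{1/2}$. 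Once that is in place, $\ball{\radius}{\Largestcomponent}{\root_L}$ lies entirely inside a single pendant tree and Grimmett's theorem gives $\skeletonTree$, as you intend. Separately, you should double--check the claim $\numberVertices{\Largestcomponent} = (4+\smallo{1})s$: the constant $4$ is the Erd\H{o}s--R\'enyi value, and for the random planar graph the correct coefficient in this regime is different; the paper sidesteps this by only using $\numberVertices{\Largestcomponent} = \smallo{n}$ together with the more directly usable estimates of \Cref{thm:internal_structure}\ref{thm:internal_structure4}, \ref{thm:internal_structure5} for the non--complex part.
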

Next, we consider the regime when the average degree $2m/n$ tends to a constant $c\in (1,2)$. Then the local limit of $\planargraph(n,m)$ is a \lq linear combination\rq\ of the \GW\ tree $\gwt{1}$ and the Skeleton tree $\skeletonTree$. More formally, given two random rooted graphs $\rootedGraph{G_1}{\root_1}, \rootedGraph{G_2}{\root_2}$ and a constant $a\in[0,1]$, we write $a\rootedGraph{G_1}{\root_1}+(1-a)\rootedGraph{G_2}{\root_2}$ for the random rooted graph $\rootedGraph{G}{\root}$ that satisfies 
\begin{align*}
\prob{\rootedGraph{G}{\root}=\rootedGraph{H}{\root_H}}=a~\prob{\rootedGraph{G_1}{\root_1}=\rootedGraph{H}{\root_H}}+(1-a)~\prob{\rootedGraph{G_2}{\root_2}=\rootedGraph{H}{\root_H}},
\end{align*}
for each fixed rooted graph $\rootedGraph{H}{\root_H}$.
\begin{thm}\label{thm:main3}
	Let $\planargraph=\planargraph(n,m)\ur\planarclass(n,m)$, $\Largestcomponent=\largestcomponent{P}$ be the largest component of $P$, and $\Rest=\rest{\planargraph}=\planargraph\setminus\Largestcomponent$. Assume $m=m(n)=\alpha n/2$, where $\alpha=\alpha(n)$ tends to a constant $c\in (1,2)$. Moreover, let $\root=\root(n)\ur\vertexSet{\planargraph}$, $\root_L=\root_L(n)\ur\vertexSet{\Largestcomponent}$, and $\root_\Rest=\root_\Rest(n)\ur\vertexSet{\Rest}$. Then we have
	\begin{enumerate}[label=\normalfont(\roman*)]
		\item\label{thm:main3A} $\distributionalLimit{\rootedGraph{\Largestcomponent}{\root_L}}{\skeletonTree}$;
		\item\label{thm:main3B}
		$\distributionalLimit{\rootedGraph{\Rest}{\root_\Rest}}{\gwt{1}}$;
		\item\label{thm:main3C}
		$\distributionalLimit{\rootedGraph{\planargraph}{\root}}{\left(c-1\right)\skeletonTree+\left(2-c\right)\gwt{1}}$.
	\end{enumerate}
\end{thm}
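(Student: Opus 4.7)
The plan is to prove parts (i) and (ii) separately by conditioning on the vertex set and edge count of $L$ (respectively $S$) and reducing to local limits that are already known (Grimmett's \Cref{thm:limit_tree1} for (i) and the techniques of \Cref{thm:main1} at $c=1$ for (ii)), and then to deduce part (iii) from (i), (ii) and the component-size asymptotics. I first record the following size facts, available from the structural results of Kang and \Luczak\ on sparse random planar graphs in this regime: whp $\numberVertices{L} = (c-1)n + o(n)$ and $\numberEdges{L} = (c-1)n + o(n)$, so the excess $\numberEdges{L} - \numberVertices{L} + 1$ of the giant is $o(n)$; consequently, whp $\numberVertices{S} = (2-c)n + o(n)$ and $2\numberEdges{S}/\numberVertices{S} \to 1$.

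For part (i), conditional on $\vertexSet{L}$ and $\edgeSet{L}$ the graph $L$ is uniform among connected planar graphs on $\vertexSet{L}$ with $\numberEdges{L}$ edges; since the excess is $o(n)$, the complex part $\complexpart{L}$ has $o(n)$ vertices, and a first-moment estimate shows that for each fixed $\ell$ the number of vertices of $L$ within distance $\ell$ of $\complexpart{L}$ is also $o(n)$. Hence a uniformly random root $\root_L$ of $L$ lies whp at distance $>\ell$ from $\complexpart{L}$, so $\ball{\ell}{L}{\root_L}$ agrees with the corresponding ball in one of the pendant trees attached to $\complexpart{L}$. Conditional on $\complexpart{L}$ and the sizes of the pendant trees, each such pendant tree is uniformly random among labelled rooted trees of its size, and since the typical pendant-tree size diverges, \Cref{thm:limit_tree1} transfers to give $\distributionalLimit{\rootedGraph{L}{\root_L}}{\skeletonTree}$. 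For part (ii), conditional on $\vertexSet{S}$ and $\edgeSet{S}$ the graph $S$ is a uniformly random planar graph on $\vertexSet{S}$ with $\numberEdges{S}$ edges all of whose components have size $o(n)$; with $\numberVertices{S} \sim (2-c)n$ and $2\numberEdges{S}/\numberVertices{S} \to 1$, the proof of \Cref{thm:main1} at $c=1$ applies on the reduced vertex set $\vertexSet{S}$, yielding $\distributionalLimit{\rootedGraph{S}{\root_\Rest}}{\gwt{1}}$.

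For part (iii), since $\ball{\ell}{\planargraph}{\root}$ depends only on the component of $\planargraph$ containing $\root$, for any fixed rooted graph $\rootedGraph{H}{\root_H}$ and $\ell\in\N$ we have
\begin{align*}
	\prob{\ball{\ell}{\planargraph}{\root}\isomorphic\rootedGraph{H}{\root_H}} = \expec{\tfrac{\numberVertices{L}}{n}\, f_L(\planargraph) + \tfrac{\numberVertices{S}}{n}\, f_S(\planargraph)},
\end{align*}
where $f_L(\planargraph) := \condprob{\ball{\ell}{L}{\root_L}\isomorphic\rootedGraph{H}{\root_H}}{\planargraph}$ and $f_S$ is the analogue for $S$. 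Writing $\numberVertices{L}/n = (c-1) + \varepsilon_n$ with $\varepsilon_n \to 0$ in probability and $0\le f_L \le 1$, dominated convergence yields $\expec{(\numberVertices{L}/n)\, f_L(\planargraph)} = (c-1)\expec{f_L(\planargraph)} + o(1)$, and analogously for $S$ with constant $2-c$. Parts (i) and (ii) identify the limits of $\expec{f_L(\planargraph)}$ and $\expec{f_S(\planargraph)}$, producing the claimed $(c-1)\skeletonTree+(2-c)\gwt{1}$ limit.

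The main obstacle will be part (i). In contrast to the \Erdos--\Renyi\ giant at $c\in(1,2)$, which has linear excess and a very different local limit, planarity forces the excess of $L$ to be $o(n)$, concentrating the cycles in a sublinear-sized complex part $\complexpart{L}$. Quantitatively controlling the size of $\complexpart{L}$ and its $\ell$-neighbourhood, so that a uniformly random vertex of $L$ sits deep inside a uniformly random pendant tree and \Cref{thm:limit_tree1} can be invoked, is the technical crux.
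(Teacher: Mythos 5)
Your overall plan mirrors the paper's: locate a uniformly random root of $\Largestcomponent$ deep inside a pendant tree hanging off a sublinear subgraph of minimum degree two, reduce $\Rest$ to the subcritical regime, and combine via a linear-combination lemma (your part (iii) is essentially the paper's \Cref{lem:linear_combination} and is fine). However, there are two substantive problems. First, in part (i) what you call $\complexpart{\Largestcomponent}$ is actually the core $\core{\Largestcomponent}$: since $\Largestcomponent$ is connected with at least two cycles, $\complexpart{\Largestcomponent}=\Largestcomponent$ itself and has $\sim(c-1)n$ vertices, not $o(n)$; it is the core that is sublinear. Moreover, "excess $o(n)$" does not by itself imply the core is $o(n)$ (a cycle of length $n/2$ with pendant leaves has excess one and a core of linear size). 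The sublinearity of the core is a separate structural fact, \Cref{thm:internal_structure}\ref{thm:internal_structure2}, which must be invoked explicitly. Once this is corrected, the remainder of your part (i) tracks the paper's route through \Cref{lem:limit_forest} and \Cref{lem:local_random_complex}.

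Second, part (ii) has a genuine gap. Conditioning directly on $\left(\numberVertices{\Rest},\numberEdges{\Rest}\right)$ and then invoking "the techniques of \Cref{thm:main1} at $c=1$" requires the tight bound $\numberEdges{\Rest}\leq \numberVertices{\Rest}/2+\bigo{\numberVertices{\Rest}^{2/3}}$ so that Britikov's \Cref{thm:non_complex} can transfer properties from $G(\numberVertices{\Rest},\numberEdges{\Rest})$. But $\Rest$ is not the non--complex part: whp $\Rest=\rest{\complexpart{\planargraph}}\cup\restcomplex{\planargraph}$, and while $\numberVertices{\rest{\complexpart{\planargraph}}}=\smallo{\numberVerticesOutside}$ (\Cref{thm:internal_structure}\ref{thm:internal_structure8}), these small complex components may well have $\smallomega{\numberVerticesOutside^{2/3}}$ vertices and hence $\smallomega{\numberVerticesOutside^{2/3}}$ excess, pushing $\numberEdges{\Rest}-\numberVertices{\Rest}/2$ past the range where Britikov applies. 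The hypotheses of \Cref{thm:main1} give only $2\numberEdges{\Rest}/\numberVertices{\Rest}\to 1$, which is insufficient. The paper avoids this by first proving $\contiguous{\planargraph_\Rest}{\planargraph_\Restcomplex}$ (a uniformly random root whp lies in $\Restcomplex$ and its ball stays there) and then analysing $\Restcomplex$, for which \Cref{thm:internal_structure}\ref{thm:internal_structure5} supplies the required tightness of $\numberEdgesOutside-\numberVerticesOutside/2$. You should insert this reduction to the non--complex part before applying the subcritical machinery.
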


Finally, we consider the case that the average degree $2m/n$ tends to 2 and $m\leq n+\smallo{n\left(\log n\right)^{-2/3}}$.
\begin{thm}\label{thm:main4}
	Let $\planargraph=\planargraph(n,m)\ur\planarclass(n,m)$, $\Largestcomponent=\largestcomponent{P}$ be the largest component of $P$, and $\Rest=\rest{\planargraph}=\planargraph\setminus\Largestcomponent$. Assume $m=m(n)$ is such that $m=n+\smallo{n}$ and $m\leq n+\smallo{n\left(\log n\right)^{-2/3}}$. Moreover, let $\root=\root(n)\ur\vertexSet{\planargraph}$, $\root_L=\root_L(n)\ur\vertexSet{\Largestcomponent}$, and $\root_\Rest=\root_\Rest(n)\ur\vertexSet{\Rest}$. Then we have
	\begin{enumerate}[label=\normalfont(\roman*)]
		\item $\distributionalLimit{\rootedGraph{\Largestcomponent}{\root_L}}{\skeletonTree}$;
		\item
		$\distributionalLimit{\rootedGraph{\Rest}{\root_\Rest}}{\gwt{1}}$;
		\item
		$\distributionalLimit{\rootedGraph{\planargraph}{\root}}{\skeletonTree}$.
	\end{enumerate}
\end{thm}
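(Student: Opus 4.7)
The approach is to decompose $\planargraph$ into its complex part $\complexpart{\planargraph}$ (the union of $2$-cores of components containing at least two independent cycles) and the tree-like remainder $\restcomplex{\planargraph} := \planargraph \setminus \complexpart{\planargraph}$, mirroring the strategy used to prove \Cref{thm:main3}. The principal difference here is that $2m/n \to 2$, so the complex part is sub-linear in $n$ and essentially all vertices concentrate in the largest component.

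The first step is structural. The plan is to use the bound $m \leq n + \smallo{n(\log n)^{-2/3}}$ together with estimates on the size of the complex part (in the flavour of those used by Kang--\Luczak\ and previously exploited for \Cref{thm:main2,thm:main3}) to show that whp $\numberVertices{\complexpart{\planargraph}} = \smallo{n(\log n)^{-2/3}}$, $\numberVertices{\Largestcomponent} = (1 - \smallo{1})n$, and $\numberVertices{\Rest} = \smallo{n}$. These estimates immediately reduce part (iii) to part (i): a uniformly random vertex of $\planargraph$ lies in $\Largestcomponent$ whp, so the local limit of $\rootedGraph{\planargraph}{\root}$ coincides with that of $\rootedGraph{\Largestcomponent}{\root_L}$.

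For part (ii) the plan is to follow the argument from \Cref{thm:main3}\ref{thm:main3B}: conditional on its vertex and edge counts, the distribution of $\Rest$ is, up to negligible corrections, uniform over planar forests on the corresponding number of vertices and edges (the small components are whp trees save for a $\smallo{1}$ proportion of vertices, by the bound on the complex part). A uniformly random vertex of such a forest lies in a tree whose local structure converges to $\gwt{1}$, yielding \ref{thm:main2B}-type convergence.

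The heart of the argument is part (i). Conditional on $\complexpart{\planargraph}$ and on the data describing how trees attach to $\complexpart{\planargraph}$, the complement $\restcomplex{\planargraph}$ is (up to negligible corrections) a uniformly random planar forest realising the prescribed attachment data. Since $\numberVertices{\complexpart{\planargraph}} = \smallo{n}$, a uniformly random vertex $\root_L$ of $\Largestcomponent$ is whp at distance greater than any fixed radius $\radius$ from $\complexpart{\planargraph}$, so the ball $\ball{\radius}{\Largestcomponent}{\root_L}$ lies inside a single tree of this forest. The goal is then to show that this tree, viewed from $\root_L$, is locally distributed as a uniformly random vertex in a uniformly random tree on $(1+\smallo{1})n$ labelled vertices; \Cref{thm:limit_tree1} then delivers convergence to $\skeletonTree$. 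The main obstacle is precisely this last step: although the attachment points at $\complexpart{\planargraph}$ could in principle bias the local distribution of the forest pieces, one must exploit the fact that only $\smallo{n}$ vertices are anchored to $\complexpart{\planargraph}$ to conclude that the conditional local law at a typical vertex matches that of a uniform random tree in the limit.
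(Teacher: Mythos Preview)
Your overall strategy matches the paper's, but there is a terminological confusion that propagates into a real gap. In the paper, the \emph{complex part} $\complexpart{\planargraph}$ is the union of all components with at least two cycles; this \emph{contains} the giant component, so $\numberVertices{\complexpart{\planargraph}}=(1-\smallo{1})n$ here, not $\smallo{n}$. What you describe as ``the union of $2$-cores of components containing at least two independent cycles'' is the \emph{core} $\core{\planargraph}$, and it is the core that is small: $\numberVertices{\core{\planargraph}}=\smallo{\numberVertices{\complexpart{\planargraph}}}$ by \Cref{thm:internal_structure}\ref{thm:internal_structure2}. With this correction your decomposition becomes the paper's: condition on the core, realise $\complexpart{\planargraph}$ as core plus a random forest $\forest(q,\numberVertices{C})$ of pendant trees, and argue that a uniform root in $\Largestcomponent$ typically sees only its own tree.

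Two substantive points remain. First, the assertion that $\root_L$ is whp at distance $>\radius$ from the core does \emph{not} follow from $\numberVertices{\core{\planargraph}}=\smallo{n}$ alone; a small set can still be within bounded distance of almost every vertex. The paper proves this separately (\Cref{lem:limit_forest}\ref{lem:limit_forest2}) by invoking that two uniform vertices of a random tree on $k$ vertices are at distance $\smallomega{k^{1/3}}$ (\Cref{thm:limit_tree2}). Second, your claim that the tree containing $\root_L$ has $(1+\smallo{1})n$ vertices is unjustified and generally false: the forest has $\numberVertices{\core{\planargraph}}$ components, and this number is $\smallomega{1}$, so individual trees can be far smaller than $n$. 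The paper only shows that the tree containing the root has $\smallomega{1}$ vertices (via a pigeonhole bound: at most $t\sqrt{n/t}$ vertices lie in trees of size $\le\sqrt{n/t}$), which suffices because \Cref{thm:limit_tree1} gives $\distributionalLimit{\unbiased{T(k)}}{\skeletonTree}$ for \emph{any} $k\to\infty$.

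For part (ii), your sketch is in the right spirit but imprecise: $\Rest$ is not a uniform forest. The paper's route is cleaner: whp $\Rest$ and the non-complex part $\Restcomplex$ differ only in the small components of $\complexpart{\planargraph}$, which have $\smallo{\numberVertices{\Restcomplex}}$ vertices (\Cref{thm:internal_structure}\ref{thm:internal_structure8}), and $\Restcomplex$ conditioned on $(\numberVerticesOutside,\numberEdgesOutside)$ is uniform over graphs with no complex component; since $\numberEdgesOutside=\numberVerticesOutside/2+\bigo{h\numberVerticesOutside^{2/3}}$ (\Cref{thm:internal_structure}\ref{thm:internal_structure5}), its local limit is $\gwt{1}$ via comparison with $G(\numberVerticesOutside,\numberEdgesOutside)$. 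Part (iii) then follows exactly as you say, from $\numberVertices{\Largestcomponent}=(1-\smallo{1})n$ and \Cref{lem:linear_combination}.
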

We conclude this section with some observations on the case that $m$ is as in \Cref{thm:main2}, \ref{thm:main3}, or \ref{thm:main4}. The local limit of the largest component $\largestcomponent{\planargraph}$ of $\planargraph=\planargraph(n,m)$ is the Skeleton tree $\skeletonTree$ and that of the small components $\rest{\planargraph}$ is the \GW\ tree $\gwt{1}$. In view of \Cref{thm:local_er,thm:limit_tree1} this indicates that $\largestcomponent{\planargraph}$ locally looks like a tree and $\rest{\planargraph}$ like the \ER\ random graph with average degree 1. Moreover, \Cref{thm:main3} provides a {\em smooth} transition between the cases when the average degree $2m/n$ is tending to 1 and 2, in the sense that we obtain the statements of \Cref{thm:main2,thm:main4} if we plug in the extreme cases $c=1$ and $c=2$ in \Cref{thm:main3}, respectively. \Cref{thm:main3}\ref{thm:main3C} says that the local limit of $\planargraph$ is a \lq linear combination\rq\ of the local limit of the largest component $\largestcomponent{\planargraph}$ and that of the small components $\rest{\planargraph}$. 

\subsection{Outline of the paper} 
The rest of the paper is structured as follows. After setting the basic notations and definitions in \Cref{sec:prelim}, we present our proof strategy in \Cref{sec:strategy}. In \Cref{sec:local_er_non_complex} we provide statements on the local structure of the \ER\ random graph and the so--called non--complex part, which will be later the main ingredients to determine the local limit of the small components. Similarly, we consider in \Cref{sec:local_complex} the so--called complex part, which we will use later to deduce the local limit of the largest component. \Cref{sec:proofs} is devoted to the proofs of the main results. Subsequently in \Cref{sec:local_core_kernel}, we look deeper into the local structure of a random planar graph by considering cores and kernels. In \Cref{sec:discussion}, we conclude with a possible generalisation of our results.

\section{Preliminaries}\label{sec:prelim}
\subsection{Notations for graphs}
Throughout the paper, we consider only undirected (simple or multi) graphs.
\begin{definition}
	Given a (simple or multi) graph $H$ we denote by
	\begin{itemize}
		\item 
		$\vertexSet{H}$ the vertex set of $H$ and
		\item[]
		$\numberVertices{H}$ the order of $H$, i.e. the number of vertices in $H$;
		\item 
		$\edgeSet{H}$ the edge set of $H$ and
		\item[]
		$\numberEdges{H}$ the size of $H$, i.e. the number of edges in $H$;	
		\item 
		$\largestcomponent{H}$ the largest component of $H$;
		\item
		$\rest{H}:=H \setminus \largestcomponent{H}$ the graph obtained from $H$ by deleting the largest component, called (the union of) the small components;
		\item
		$\distance{H}{v}{w}$ the distance between the vertices $v,w\in\vertexSet{H}$ in $H$, i.e. the length of the shortest path from $v$ to $w$. 
	\end{itemize}
\end{definition}
\begin{definition}\label{def:graph_class}
	Given a class $\cl$ of graphs, we write
	$\cl(n)$ for the subclass of $\cl$ containing the graphs on vertex set $[n]$ 
	and $\cl(n,m)$ for the subclass of $\cl$ containing the graphs on vertex set $[n]$ with $m$ edges, respectively. We denote by $A(n)\ur \cl(n)$ a graph chosen uniformly at random from $\cl(n)$
	and by $A(n,m)\ur \cl(n,m)$ a graph chosen uniformly at random from $\cl(n,m)$, respectively. We tacitly assume that $|\cl(n)|$ and $|\cl(n,m)|$ are finite for all considered classes $\cl$ and all $n,m\in \N$.  
\end{definition}

\subsection{Complex part, core, and kernel}\label{sub:decomposition}
Given a graph $H$, we call a component {\em complex} if it has at least two cycles. Furthermore, we say that $H$ is {\em complex} if all its components are. We denote by $\complexpart{H}$ the {\em complex part} of $H$, which is the union of all complex components. The remaining part $\restcomplex{H}:=H\setminus\complexpart{H}$ is called the {\em non--complex part} of $H$. We denote by $n_U(H):=\numberVertices{\restcomplex{H}}$ and $m_U(H):=\numberEdges{\restcomplex{H}}$ the number of vertices and edges in $\restcomplex{H}$, respectively. The {\em core} $\core{H}$ is the maximal subgraph of $\complexpart{H}$ with minimum degree at least two. Equivalently, the core $\core{H}$ is obtained by recursively deleting vertices of degree one in $\complexpart{H}$. Conversely, the complex part $\complexpart{H}$ can be constructed by replacing each vertex in $\core{H}$ by a rooted tree. Finally, we obtain the {\em kernel} $\kernel{H}$ of $H$ by replacing all maximal paths in $\core{H}$ having only internal vertices of degree two by an edge between the endpoints of the path. We note that the kernel $\kernel{H}$ can contain multiple edges and loops. Reversing this construction, the core $\core{H}$ arises from the kernel $\kernel{H}$ by subdividing edges.

Later we will see that the local structures of the largest component $\largestcomponent{\planargraph}$ of $\planargraph=\planargraph(n,m)$ and the small components $\rest{\planargraph}$ are closely related to the local limits of the complex part $\complexpart{\planargraph}$ and the non--complex part $\restcomplex{\planargraph}$, respectively. In order to analyse $\complexpart{\planargraph}$ and $\restcomplex{\planargraph}$, we will use the following random graphs.

\begin{definition}\label{def:random_complex}
	Let $C$ be a core, i.e. a graph with minimum degree at least two, and $q\in\N$. We denote by $Q(C,q)$ a graph chosen uniformly at random from the class of all complex graphs having core $C$ and vertex set $[q]$.
\end{definition}

\begin{definition}\label{def:random_non_complex}
	We let $\nocomplexClass$ be the class of all graphs without complex components. For $n,m\in\N$ we denote by $\nocomplexClass(n,m)$ the subclass of $\nocomplexClass$ containing all graphs on vertex set $[n]$ with $m$ edges and write $U(n,m)\ur \nocomplexClass(n,m)$ for a graph chosen uniformly at random from $\nocomplexClass(n,m)$.
\end{definition}

\subsection{Internal structure of a random planar graph}
In the proofs of \Cref{thm:main2,thm:main3,thm:main4} we will use results on the internal structure of $\planargraph=\planargraph(n,m)$ from \cite{surface}, e.g. the order of the largest component $\largestcomponent{\planargraph}$, the complex part $\complexpart{\planargraph}$, and the core $\core{\planargraph}$.
\begin{thm}[\cite{surface}]\label{thm:internal_structure}
	Let $\planargraph=\planargraph(n,m)\ur\planarclass(n,m)$ be the random planar graph, $Q=\complexpart{\planargraph}$ the complex part of $\planargraph$, $C=\core{\planargraph}$ the core, $K=\kernel{\planargraph}$ the kernel, $\largestcomponent{\planargraph}$ the largest component, and $\rest{\planargraph}$ the small components. We denote by $\largestcomponent{Q}$ and $\rest{Q}$ the largest component and the small components of $Q$, respectively. Moreover, let $\numberVerticesOutside=\numberVerticesOutside(\planargraph)$ and $\numberEdgesOutside=\numberEdgesOutside(\planargraph)$ be the number of vertices and edges in the non--complex part $\restcomplex{\planargraph}$ of $\planargraph$, respectively. We assume that either $m=n/2+s$ for $s=s(n)>0$ such that $s=\smallo{n}$ and $s^3n^{-2}\to \infty$, $m=\alpha n/2$ for $\alpha=\alpha(n)$ tending to a constant in $(1,2)$, or $m=n+\smallo{n}$ and $m\leq n+\smallo{n\left(\log n\right)^{-2/3}}$. Then \whp
	\begin{enumerate}[label=\normalfont(\roman*)]
		\item\label{thm:internal_structure1}
		$\numberEdges{K}=\smallo{\numberVertices{C}}$;
		\item\label{thm:internal_structure9}
		$\numberEdges{K}=\left(3/2+\smallo{1}\right)\numberVertices{K}$;
		\item\label{thm:internal_structure2}
		$\numberVertices{C}=\smallo{\numberVertices{Q}}$;
		\item\label{thm:internal_structure3}
		$\numberVertices{\largestcomponent{\planargraph}}=\left(2m/n-1+\smallo{1}\right)n$;
		\item\label{thm:internal_structure4}
		$\numberVerticesOutside=\smallomega{1}$;
		\item\label{thm:internal_structure5}
		$m_U= n_U/2+\bigo{hn_U^{2/3}}$ for each function $h=h(n)=\smallomega{1}$;
		\item\label{thm:internal_structure6}
		$\largestcomponent{\planargraph}=\largestcomponent{Q}$;
		\item\label{thm:internal_structure7}
		$\numberVertices{\rest{Q}}=\smallo{\numberVertices{Q}}$;
		\item\label{thm:internal_structure8}
		$\numberVertices{\rest{Q}}=\smallo{\numberVerticesOutside}$.
	\end{enumerate}
\end{thm}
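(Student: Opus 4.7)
The plan is to perform the classical core--kernel--tree decomposition of $\planargraph=\planargraph(n,m)$ and analyse each layer via enumeration. Every planar graph decomposes uniquely into a (planar) kernel $K=\kernel{\planargraph}$ (a multigraph of minimum degree at least $3$), a core $C=\core{\planargraph}$ obtained from $K$ by subdividing each edge into an internally-disjoint path, a complex part $Q=\complexpart{\planargraph}$ obtained from $C$ by attaching a rooted tree at each core vertex, and a non-complex part $\restcomplex{\planargraph}\in\nocomplexClass$ on the remaining vertices. Since planarity of $\planargraph$ is equivalent to planarity of $K$, one obtains the counting identity
\[
|\planarclass(n,m)|=\sum_{K,\,C,\,Q}\binom{n}{\numberVertices{Q}}\,p(K)\,\subdivisionNumber{\numberEdges{K}}{\numberVertices{C}}\,t(C,Q)\,|\nocomplexClass(n-\numberVertices{Q},m-\numberEdges{Q})|,
\]
where $p(K)$ counts labelled planar kernels and $t(C,Q)$ counts tree attachments. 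The task is then to identify, for each of the three regimes, where the mass of this sum concentrates.

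For items \ref{thm:internal_structure1} and \ref{thm:internal_structure9}, the key input is the exponential-type enumeration of planar kernels via singularity analysis in the spirit of Gim\'enez--Noy and Bender--Gao. It implies that the dominant contribution to the sum comes from kernels of essentially \emph{cubic} type, because a degree-$4$ kernel vertex can be replaced by two degree-$3$ vertices at an exponential gain once the subdivision and tree-attachment factors are folded in; this yields $\numberEdges{K}/\numberVertices{K}=3/2+\smallo{1}$. In the stated regimes the kernel is moreover forced to be small relative to the core, because the sparse planar enumeration combined with $\subdivisionNumber{\numberEdges{K}}{\numberVertices{C}}$ favours heavy subdivision, so $\numberEdges{K}=\smallo{\numberVertices{C}}$. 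Item \ref{thm:internal_structure2} then follows since the average size of the rooted trees attached to core vertices diverges in all three regimes, giving $\numberVertices{Q}=\numberVertices{C}\cdot\smallomega{1}$.

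Items \ref{thm:internal_structure3}--\ref{thm:internal_structure5} and \ref{thm:internal_structure7}--\ref{thm:internal_structure8} are obtained by conditioning on $(K,C,Q)$: on the remaining $n_U=n-\numberVertices{Q}$ vertices and $m_U=m-\numberEdges{Q}$ edges, the non-complex part is distributed uniformly on $\nocomplexClass(n_U,m_U)$. A contiguity argument identifies $\nocomplexClass(n_U,m_U)$ with subcritical $G(n_U,m_U)$ conditioned on the absence of complex components, and the classical \ER\ theory then yields \ref{thm:internal_structure4}, \ref{thm:internal_structure5} (the ratio $m_U/n_U$ is forced to $1/2+\bigo{hn_U^{-1/3}}$, since any further edge surplus would typically create a complex component), and \ref{thm:internal_structure3} (by combining the order of $Q$ computed from the kernel enumeration with the order of $\restcomplex{\planargraph}$). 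Items \ref{thm:internal_structure6}--\ref{thm:internal_structure8} reflect that in each of these regimes the complex part has a \emph{unique} giant component: splitting a planar kernel into two subkernels of comparable size is exponentially unlikely under the planar enumeration, so whp $Q$ is connected up to a negligible remainder, which in particular forces $\largestcomponent{\planargraph}=\largestcomponent{Q}$.

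The hardest step is item \ref{thm:internal_structure9}. Subdivision and tree-attachment smear out differences between kernels, so small perturbations in the kernel degree sequence become invisible to most observables yet must be pinned down sharply to rule out exponentially many non-cubic kernels. Controlling this requires a delicate analytic-combinatorial treatment of the generating function of $3$-connected planar graphs, or a refined switching argument directly on the kernel, to show that non-cubic kernels contribute only a $\smallo{1}$ fraction of the total. Once \ref{thm:internal_structure9} is in hand, the remaining items amount to adapting the classical sparse $G(n,m)$ analysis, with the planarity constraint entering only through the small kernel.
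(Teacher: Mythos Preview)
This theorem is not proved in the present paper at all: it is quoted verbatim from \cite{surface} (Kang, Mo{\ss}hammer, and Spr\"ussel) and used here as a black box. There is therefore no ``paper's own proof'' to compare your proposal against. What can be said is that your outline does track, at a very high level, the strategy actually carried out in \cite{surface}: the core--kernel decomposition, a concentration analysis for where the mass of the enumeration sits, and the identification of the non--complex part with a conditioned \ER\ graph.

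That said, your write-up is a sketch rather than a proof, and several of the load-bearing steps are asserted without argument. The claim that non-cubic kernels are exponentially suppressed (``a degree-$4$ kernel vertex can be replaced by two degree-$3$ vertices at an exponential gain'') is the heart of \ref{thm:internal_structure9}, and you acknowledge yourself that this requires either singularity analysis of the relevant generating functions or a careful switching argument; you do not supply either. Similarly, ``the average size of the rooted trees attached to core vertices diverges'' for \ref{thm:internal_structure2}, and ``splitting a planar kernel into two subkernels of comparable size is exponentially unlikely'' for \ref{thm:internal_structure6}--\ref{thm:internal_structure8}, are the conclusions one wants, not arguments for them. The actual proofs in \cite{surface} are lengthy precisely because each of these heuristics must be made quantitative across all three regimes simultaneously, and the planar constraint on the kernel interacts nontrivially with the subdivision and tree-attachment counts. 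If your goal is to reproduce a self-contained proof, you would need to fill in each of these steps; as it stands, the proposal is a correct table of contents but not a proof.
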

\subsection{Rooted graphs}
We can relate each random graph $G$ to a random rooted graph $\unbiased{G}$ in a canonical way as follows.
\begin{definition}\label{def:unbiased}
Given a random graph $G$, the random rooted graph $\unbiased{G}$ is the random rooted graph $\rootedGraph{G}{\root}$ where we first sample $G$ and given the realisation $H$ of $G$ the root $\root\ur\vertexSet{H}$ is chosen uniformly at random from $\vertexSet{H}$.
\end{definition}
In \cite{benjamini_schramm} the random rooted graph $\unbiased{G}$ is also called {\em unbiased}, as the root of $\unbiased{G}$ is chosen {\em uniformly} at random. In some cases we want to pick the root only from a subset of vertices rather than from the whole vertex set as in \Cref{def:unbiased}, e.g. from the set of all vertices that are in the largest component. 

\begin{definition}\label{def:R_rooted}
Let $\cl$ be a class of graphs and for each graph $H\in\cl$ let $\rootSet=\rootSet(H)\subseteq\vertexSet{H}$ be a non--empty subset. The {\em random $\rootSet$--rooted graph} $A_{\rootSet}=A_{\rootSet}(n)$ is the random rooted graph $\rootedGraph{A}{\root_{\rootSet}}$ where
\begin{itemize}
\item
$A=A(n)\ur\cl(n)$ is chosen uniformly at random from $\cl(n)$ and
\item
given the realisation $H$ of $A$, $\root_{\rootSet}\ur\rootSet(H)$ is chosen uniformly at random from $\rootSet(H)$.
\end{itemize}
\end{definition}
In most of our applications of \Cref{def:R_rooted} the subset $\rootSet=\rootSet(H)$ will be given as the vertex set $V(H')$ of a subgraph $H'$ of $H$, i.e. $\rootSet=\vertexSet{H'}$. With a slight abuse of notation we will write $A_{H'}$ instead of $A_{V(H')}$, e.g. $A_\Largestcomponent$ denotes the random rooted graph where the root is chosen uniformly at random from $V(\Largestcomponent)$, i.e. the set of all vertices that lie in the largest component $\Largestcomponent$. If we want to specify the root of the random rooted graph $A_{\rootSet}$ explicitly, we will use $\rootedGraph{A}{\root_{\rootSet}}$ instead of $A_{\rootSet}$. The next lemma will allow to analyse the local structure of a random rooted graph by means of random $\rootSet$--rooted graphs. In particular, we will use it to deduce the local limit of the random planar graph $\planargraph=\planargraph(n,m)$ from the local limits of the largest component $\largestcomponent{\planargraph}$ and the small components $\rest{\planargraph}$.

\begin{lem}\label{lem:linear_combination}
Let $\cl$ be a class of graphs and $A=A(n)\ur\cl(n)$. Furthermore, for each $H\in\cl$ let $\rootSet=\rootSet(H)\subseteq\vertexSet{H}$ and $\overline{\rootSet}=\overline{\rootSet}(H):=\vertexSet{H}\setminus\rootSet$ be non--empty subsets. We suppose that there exist a constant $a \in [0,1]$ and random rooted graphs $G_1$ and $G_2$ such that \begin{enumerate}[label=\normalfont(\roman*)]
\item\label{ass:1}
\whp\ $\left|\rootSet\left(A\right)\right|=\left(a+\smallo{1}\right)n$;
\item\label{ass:2}
$\distributionalLimit{A_{\rootSet}}{G_1}$ and $\distributionalLimit{A_{\overline{\rootSet}}}{G_2}$.
\end{enumerate}
Then we have
\begin{align*}
\distributionalLimit{\unbiased{A}}{a~G_1+\left(1-a\right)G_2}.
\end{align*} 
\end{lem}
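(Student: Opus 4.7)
The plan is to reduce the convergence of $\unbiased{A}$ to the two convergences assumed in \ref{ass:2}, using the density estimate \ref{ass:1} as the mixing weight. Fix an arbitrary rooted graph $\rootedGraph{H_0}{\root_0}$ and radius $\radius\in\N$, and set
$$p_1\defined\prob{\ball{\radius}{G_1}{\root_1}\isomorphic\rootedGraph{H_0}{\root_0}}, \qquad p_2\defined\prob{\ball{\radius}{G_2}{\root_2}\isomorphic\rootedGraph{H_0}{\root_0}},$$
where $\root_i$ denotes the root of $G_i$. Letting $\root\ur\vertexSet{A}$ be the uniform root of $\unbiased{A}$, the aim is to show
$$\prob{\ball{\radius}{A}{\root}\isomorphic\rootedGraph{H_0}{\root_0}}\longrightarrow ap_1+(1-a)p_2.$$

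First I would introduce the counting variables
$$X_{\rootSet}(A)\defined\left|\setbuilder{v\in\rootSet(A)}{\ball{\radius}{A}{v}\isomorphic\rootedGraph{H_0}{\root_0}}\right|,$$
and $X_{\overline{\rootSet}}(A)$ defined analogously with $\overline{\rootSet}$ in place of $\rootSet$. Conditioning on which side of the partition $\rootSet(A)\cup\overline{\rootSet}(A)=\vertexSet{A}$ the uniform root $\root$ falls into yields
$$\prob{\ball{\radius}{A}{\root}\isomorphic\rootedGraph{H_0}{\root_0}}=\expec{\frac{X_{\rootSet}(A)}{n}}+\expec{\frac{X_{\overline{\rootSet}}(A)}{n}},$$
while the definition of $A_{\rootSet}$ together with \ref{ass:2} gives
$$\expec{\frac{X_{\rootSet}(A)}{|\rootSet(A)|}}=\prob{\ball{\radius}{A}{\root_{\rootSet}}\isomorphic\rootedGraph{H_0}{\root_0}}\longrightarrow p_1,$$
and the analogous statement for $\overline{\rootSet}$ with $p_2$ as limit.

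Next I would bridge the two quantities via the factorisation
$$\frac{X_{\rootSet}(A)}{n}=\frac{|\rootSet(A)|}{n}\cdot\frac{X_{\rootSet}(A)}{|\rootSet(A)|}.$$
By \ref{ass:1} the first factor equals $a+\smallo{1}$ \whp, while the second factor lies in $[0,1]$ deterministically. Splitting the expectation on the \whp\ event $\{|\rootSet(A)|/n=a+\smallo{1}\}$, whose complement contributes at most $\smallo{1}$ because the integrand is uniformly bounded by $1$, yields
$$\expec{\frac{X_{\rootSet}(A)}{n}}=a\,\expec{\frac{X_{\rootSet}(A)}{|\rootSet(A)|}}+\smallo{1}\longrightarrow ap_1.$$
The identical argument applied to $\overline{\rootSet}$ with $1-a$ in place of $a$ gives $\expec{X_{\overline{\rootSet}}(A)/n}\to (1-a)p_2$, and summing the two limits finishes the proof. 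In the degenerate cases $a\in\{0,1\}$, one of $\rootSet$ or $\overline{\rootSet}$ has vanishing density and the bound $X_{\bullet}(A)/n\leq|\bullet(A)|/n$ forces the corresponding expectation to $0$, while the complementary term absorbs the full mass; no change is needed.

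The main obstacle is entirely bookkeeping: one has to justify interchanging the high--probability estimate of \ref{ass:1} with the expectation implicit in the definition of local weak convergence. This is a standard bounded--convergence style argument exploiting the universal bound $X_{\rootSet}(A)\leq|\rootSet(A)|\leq n$, and no planarity input or deeper probabilistic tool is required.
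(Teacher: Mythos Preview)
Your proof is correct and follows essentially the same approach as the paper: both split according to whether the root lands in $\rootSet$ or $\overline{\rootSet}$, use the \whp\ concentration $|\rootSet(A)|/n=a+\smallo{1}$ to replace the random weight by $a$, and control the error via the uniform bound $X_{\rootSet}(A)/|\rootSet(A)|\leq 1$. The only cosmetic difference is that you package the computation as expectations of the ratios $X_{\rootSet}(A)/n$ and $X_{\rootSet}(A)/|\rootSet(A)|$, whereas the paper writes out the same quantities as explicit sums over realisations $H\in\cl(n)$.
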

\begin{proof}
Assume $\unbiased{A}=\rootedGraph{A}{\root}$, where $r=r(n)\ur[n]$. We denote by $\tilde{\cl}(n)\subseteq\cl(n)$ the subclass containing those graphs $H\in\cl(n)$ satisfying $\left|\rootSet\left(H\right)\right|=\left(a+\smallo{1}\right)n$. By assumption \ref{ass:1} we have \whp\ $A\in\tilde{\cl}(n)$. Using that we obtain for each rooted graph $\detGraph$ and $\radius\in \N$

\begin{align}\label{eq:2}
\prob{\ballP{\ell}{\unbiased{A}}\isomorphic\detGraph}&=~~~~\sum_{H\in\tilde{\cl}(n)}\prob{A=H,\root\in \rootSet(H)}\condprob{\ballP{\ell}{\unbiased{A}}\isomorphic\detGraph}{A=H,\root\in \rootSet(H)}\nonumber
\\&~~~~+\sum_{H\in\tilde{\cl}(n)}\prob{A=H,\root\in \overline{\rootSet}(H)}\condprob{\ballP{\ell}{\unbiased{A}}\isomorphic\detGraph}{A=H,\root\in \overline{\rootSet}(H)}+\smallo{1}.
\end{align}
By definition of $\tilde{\cl}$ we have uniformly over all $H\in\tilde{\cl}(n)$
\begin{align}\label{eq:5}
\prob{A=H,\root\in \rootSet(H)}=\prob{A=H}\condprob{\root\in\rootSet(H)}{A=H}=\left|\cl(n)\right|^{-1}\frac{\left|\rootSet(H)\right|}{n}= \left(a+\smallo{1}\right)\left|\cl(n)\right|^{-1}.
\end{align}
Moreover, we have
\begin{align}\label{eq:6}
\condprob{\ballP{\ell}{\unbiased{A}}\isomorphic\detGraph}{A=H,\root\in \rootSet(H)}=\condprob{\ballP{\ell}{A_{\rootSet}}\isomorphic\detGraph}{A=H}=\left|\cl(n)\right|\prob{\ballP{\ell}{A_{\rootSet}}\isomorphic\detGraph, A=H}.
\end{align}
Combining (\ref{eq:5}) and (\ref{eq:6}) yields
\begin{align}\label{eq:7}
\sum_{H\in\tilde{\cl}(n)}\prob{A=H,\root\in \rootSet(H)}\condprob{\ballP{\ell}{\unbiased{A}}\isomorphic\detGraph}{A=H,\root\in \rootSet(H)}&=\left(a+\smallo{1}\right)\sum_{H\in\tilde{\cl}(n)}\prob{\ballP{\ell}{A_{\rootSet}}\isomorphic\detGraph, A=H}\nonumber
\\
&=\left(a+\smallo{1}\right)\prob{\ballP{\ell}{A_{\rootSet}}\isomorphic\detGraph, A\in\tilde{\cl}}\nonumber
\\
&
=\left(a+\smallo{1}\right)\prob{\ballP{\ell}{A_{\rootSet}}\isomorphic\detGraph},
\end{align}
where we used in the last equality that \whp\ $A\in\tilde{\cl}$. Analogously, we obtain
\begin{align}\label{eq:8}
	\sum_{H\in\tilde{\cl}(n)}\prob{A=H,\root\in \overline{\rootSet}(H)}\condprob{\ballP{\ell}{\unbiased{A}}\isomorphic\detGraph}{A=H,\root\in \overline{\rootSet}(H)}=\left(1-a+\smallo{1}\right)\prob{\ballP{\ell}{A_{\overline{\rootSet}}}\isomorphic\detGraph}.
\end{align}
Next we plug (\ref{eq:7}) and (\ref{eq:8}) in equation (\ref{eq:2}) to get
\begin{align*}
\prob{\ballP{\ell}{\unbiased{A}}\isomorphic\detGraph}&=\left(a+\smallo{1}\right)\prob{\ballP{\ell}{A_{\rootSet}}\isomorphic\detGraph}+\left(1-a+\smallo{1}\right)\prob{\ballP{\ell}{A_{\overline{\rootSet}}}\isomorphic\detGraph}+\smallo{1}
\\
&=a~\prob{\ballP{\ell}{G_1}\isomorphic\detGraph}+\left(1-a\right)\prob{\ballP{\ell}{G_2}\isomorphic\detGraph}+\smallo{1},
\end{align*}
as $\distributionalLimit{A_{\rootSet}}{G_1}$ and $\distributionalLimit{A_{\overline{\rootSet}}}{G_2}$ by assumption \ref{ass:2}. As this is true for all rooted graphs $\detGraph$ and $\radius\in\N$, we obtain $\distributionalLimit{\unbiased{A}}{a~G_1+\left(1-a\right)G_2}$, as desired.
\end{proof}

\subsection{Conditional random rooted graphs}\label{sub:conditional_random_graphs}
Instead of considering $\planargraph=\planargraph(n,m)$ directly, we will later analyse the local structure of $\planargraph$ conditioned that certain properties hold, e.g. $\planargraph$ has a given graph $C$ as its core. We expect that if \lq most\rq\ of these \lq conditional\rq\ random rooted graphs have the same local limit $G$, then $G$ should also be the local limit of $\planargraph$. In the following we make that more precise by considering conditional random rooted graphs, which generalise the concept of conditional random graphs introduced in \cite[Section 3.4]{cycles}.
\begin{definition}
Let $\cl$ be a class of graphs, $\rootSet=\rootSet(H)\subseteq \vertexSet{H}$ a non--empty subset for each graph $H\in\cl$, $\imageSet$ a set, and $\func:\cl\to \imageSet$ a function. We call a sequence $\seq=\left(\term_n\right)_{n\in\N}$ {\em feasible} for $\left(\cl, \func\right)$ if for each $n\in\N$ there exists a graph $H\in\cl(n)$ such that $\func(H)=\term_n$. Furthermore, for each $n\in\N$ we denote by $\left(\condGraph{A_{\rootSet}}{\seq}\right)(n)$ the random rooted graph $\rootedGraph{G}{\root}$ which is constructed in two steps as follows:
\begin{itemize}
	\item
	$G$ is chosen uniformly at random from the set $\setbuilder{H\in\cl(n)}{\func(H)=\term_n}$;
	\item
	Given the realisation $H$ of $G$, the root $\root$ is chosen uniformly at random from $\rootSet(H)$.
\end{itemize} 
We will often omit the dependence on $n$ and write just $\condGraph{A_{\rootSet}}{\seq}$.
\end{definition}

\begin{lem}\label{lem:conditional}
Let $\cl$ be a class of graphs, $\rootSet=\rootSet(H)\subseteq\vertexSet{H}$ a non--empty subset for each graph $H\in\cl$, $\imageSet$ a set, and $\func:\cl\to \imageSet$. Suppose that $G$ is a random rooted graph such that $\distributionalLimit{\condGraph{A_{\rootSet}}{\seq}}{G}$ for each sequence $\seq$ that is feasible for $\left(\cl, \func\right)$. Then we have $\distributionalLimit{A_{\rootSet}}{G}$.
\end{lem}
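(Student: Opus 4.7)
The plan is to prove this by a straightforward law-of-total-probability argument, combined with a contradiction obtained by selecting a bad feasible sequence.

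Fix an arbitrary rooted graph $\detGraph$ and $\radius \in \N$. Set $p := \prob{\ballP{\radius}{G}\isomorphic\detGraph}$ and $q_n := \prob{\ballP{\radius}{A_\rootSet(n)}\isomorphic\detGraph}$. The goal is to show $q_n \to p$ as $n \to \infty$, which by the arbitrariness of $\detGraph$ and $\radius$ is exactly the statement $\distributionalLimit{A_\rootSet}{G}$. For each $n$, let $\imageSet_n := \setbuilder{\func(H)}{H\in\cl(n)}$ be the set of values attained by $\func$ on $\cl(n)$. By partitioning $\cl(n)$ according to the value of $\func$ and using the law of total probability, I would write
\begin{align*}
q_n \;=\; \sum_{s\in\imageSet_n} \prob{\func(A)=s}\,\condprob{\ballP{\radius}{A_\rootSet}\isomorphic\detGraph}{\func(A)=s}.
\end{align*}
The point of the definition of $\condGraph{A_\rootSet}{\seq}$ is that, whenever $\term_n = s$, the conditional probability on the right equals $p_n(s) := \prob{\ballP{\radius}{(\condGraph{A_\rootSet}{\seq})(n)}\isomorphic\detGraph}$, which does not otherwise depend on the values $\term_m$ for $m\neq n$.

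Next I would argue by contradiction: suppose $q_n \not\to p$. Then there exist $\varepsilon > 0$ and a subsequence $(n_k)$ along which $\left|q_{n_k}-p\right|\geq\varepsilon$. After passing to a further subsequence if necessary, I may assume that either $q_{n_k}\geq p+\varepsilon$ for all $k$, or $q_{n_k}\leq p-\varepsilon$ for all $k$. Consider the first case. Since $q_{n_k}$ is a convex combination of the values $p_{n_k}(s)$ with $s\in\imageSet_{n_k}$ and weights $\prob{\func(A)=s}$ summing to $1$, there must exist some $s_{n_k}\in\imageSet_{n_k}$ with $p_{n_k}(s_{n_k})\geq p+\varepsilon$. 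The second case is analogous with $\leq p-\varepsilon$.

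Now I construct a feasible sequence $\seq=(\term_n)_{n\in\N}$ that violates the hypothesis. For $n=n_k$ set $\term_n := s_{n_k}$; for any other $n$ (for which $\imageSet_n$ is non-empty by the tacit finiteness and non-triviality of $\cl(n)$) pick $\term_n$ arbitrarily from $\imageSet_n$. By construction $\seq$ is feasible for $\left(\cl,\func\right)$, yet $p_{n_k}(\term_{n_k})$ stays at distance at least $\varepsilon$ from $p$ along $(n_k)$. Hence $\condGraph{A_\rootSet}{\seq}$ does not converge locally weakly to $G$, contradicting the assumption of the lemma. Therefore $q_n \to p$, and the proof is complete.

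There is no real obstacle here; the only mild subtlety is bookkeeping the definitions so that the conditional ball probability under $\func(A)=s$ is genuinely identified with $p_n(s)$ for any feasible sequence whose $n$-th term is $s$, and that one may always extend a partially specified sequence to a fully feasible one using the non-emptiness of the classes $\cl(n)$.
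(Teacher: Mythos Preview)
Your proof is correct and follows essentially the same idea as the paper's: both decompose $q_n$ via the law of total probability and exploit that a convex combination is bounded by its extreme values, then build a feasible sequence from those extremes. The paper's version is slightly more direct---for each $n$ it simply takes $\term_n^\ast$ to be the value maximising (respectively minimising) $\condprob{\ballP{\radius}{A_\rootSet}\isomorphic\detGraph}{\func(A)=\term}$ and sandwiches $q_n$ between the two resulting feasible-sequence probabilities, avoiding the contradiction and subsequence extraction---but the substance is the same.
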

\begin{proof}
Assume $A_{\rootSet}=\rootedGraph{A}{\root_{\rootSet}}$, where $A=A(n)\ur\cl(n)$. Furthermore, let $\detGraph$ be a rooted graph and $\radius\in\N$. We define $\seq^\ast=\left(\term_n^\ast\right)_{n\in\N}$ such that for each $n\in \N$ the probability $\condprob{\ballP{\radius}{A_{\rootSet}(n)}\isomorphic \detGraph}{\func(A)=\term}$ is maximised for $\term=\term_n^\ast$ (among all $\term\in \imageSet$ for which there is a graph $H\in\cl(n)$ with $\func(H)=\term$). Then we have for $n\in\N$
\begin{align}\label{eq:9}
\prob{\ballP{\radius}{A_{\rootSet}}\isomorphic \detGraph}&=\sum_{\term\in \imageSet}\prob{\func(A)=\term}\condprob{\ballP{\radius}{A_{\rootSet}}\isomorphic \detGraph}{\func(A)=\term}\nonumber
\\
&
\leq\sum_{\term\in \imageSet}\prob{\func(A)=\term}\condprob{\ballP{\radius}{A_{\rootSet}}\isomorphic \detGraph}{\func(A)=\term_n^\ast}\nonumber
\\
&
=\condprob{\ballP{\radius}{A_{\rootSet}}\isomorphic \detGraph}{\func(A)=\term_n^\ast}\nonumber
\\
&
=\prob{\ballP{\radius}{\condGraph{A_{\rootSet}}{\seq^\ast}}\isomorphic \detGraph}.
\end{align}
We note that the sequence $\seq^\ast$ is feasible and therefore we have $\distributionalLimit{\condGraph{A_{\rootSet}}{\seq^\ast}}{G}$. Using that in (\ref{eq:9}) yields $\prob{\ballP{\radius}{A_{\rootSet}}\isomorphic \detGraph}\leq\prob{\ballP{\radius}{G}\isomorphic \detGraph}+\smallo{1}$. Analogously, we obtain $\prob{\ballP{\radius}{A_{\rootSet}}\isomorphic \detGraph}\geq \prob{\ballP{\radius}{G}\isomorphic \detGraph}+\smallo{1}$ and therefore $\distributionalLimit{A_{\rootSet}}{G}$, as desired.
\end{proof}

\subsection{Contiguity of random rooted graphs}
To express that the local structures of two random rooted graphs are asymptotically similar, we use the concept of contiguity.  
\begin{definition}
For each $n\in\N$, let $G_1=G_1(n)$ and $G_2=G_2(n)$ be random rooted graphs. We say that $G_1$ and $G_2$ are {\em contiguous}, denoted by $\contiguous{G_1}{G_2}$, if for all rooted graphs $\detGraph$ and $\radius\in\N$
\begin{align*}
\Big|\prob{\ballP{\radius}{G_1}\isomorphic\detGraph}-\prob{\ballP{\radius}{G_2}\isomorphic\detGraph}\Big|=\smallo{1}.
\end{align*}
\end{definition}

As expected, the local limits of two contiguous random rooted graphs coincide provided they exist.
\begin{lem}\label{lem:contiguous1}
For each $n\in\N$, let $G_1=G_1(n)$ and $G_2=G_2(n)$ be random rooted graphs such that $\contiguous{G_1}{G_2}$. If there exists a random rooted graph $G$ such that $\distributionalLimit{G_1}{G}$, then also $\distributionalLimit{G_2}{G}$.
\end{lem}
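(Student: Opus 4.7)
The plan is to prove this by a direct application of the triangle inequality to the defining probabilities of local weak convergence. By definition, we need to establish that for every fixed rooted graph $\detGraph$ and every $\radius \in \N$, the probability $\prob{\ballP{\radius}{G_2} \isomorphic \detGraph}$ converges to $\prob{\ballP{\radius}{G} \isomorphic \detGraph}$ as $n \to \infty$.

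First I would fix an arbitrary rooted graph $\detGraph$ and $\radius \in \N$ and consider the difference $\prob{\ballP{\radius}{G_2} \isomorphic \detGraph} - \prob{\ballP{\radius}{G} \isomorphic \detGraph}$. By inserting the term $\prob{\ballP{\radius}{G_1} \isomorphic \detGraph}$ and applying the triangle inequality, the absolute value of this difference is bounded above by
\begin{align*}
\Big|\prob{\ballP{\radius}{G_2} \isomorphic \detGraph} - \prob{\ballP{\radius}{G_1} \isomorphic \detGraph}\Big| + \Big|\prob{\ballP{\radius}{G_1} \isomorphic \detGraph} - \prob{\ballP{\radius}{G} \isomorphic \detGraph}\Big|.
\end{align*}
The first summand is $\smallo{1}$ by the contiguity assumption $\contiguous{G_1}{G_2}$, while the second summand is $\smallo{1}$ by the hypothesis $\distributionalLimit{G_1}{G}$. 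Since $\detGraph$ and $\radius$ were arbitrary, this yields $\distributionalLimit{G_2}{G}$.

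There is no real obstacle here: the statement is essentially tautological once contiguity is unpacked, and the only step is the triangle inequality combined with the fact that a sum of two $\smallo{1}$ terms is itself $\smallo{1}$. The lemma is best viewed as a convenience result that allows us to transfer local weak limits along contiguity relations, which will presumably be invoked later when approximating $\planargraph(n,m)$ (or its components) by more tractable random rooted graphs whose local limits are easier to analyse.
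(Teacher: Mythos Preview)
Your proof is correct and takes essentially the same approach as the paper: fix $\detGraph$ and $\radius$, then combine the $\smallo{1}$ estimate from contiguity with the $\smallo{1}$ estimate from $\distributionalLimit{G_1}{G}$. The paper simply chains the two approximations as $\prob{\ballP{\radius}{G_2}\isomorphic\detGraph}=\prob{\ballP{\radius}{G_1}\isomorphic\detGraph}+\smallo{1}=\prob{\ballP{\radius}{G}\isomorphic\detGraph}+\smallo{1}$, which is exactly your triangle-inequality argument written more tersely.
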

\begin{proof}
Let $\detGraph$ be a rooted graph and $\radius\in\N$. We have
\begin{align*}
\prob{\ballP{\radius}{G_2}\isomorphic\detGraph}=\prob{\ballP{\radius}{G_1}\isomorphic\detGraph}+\smallo{1}=\prob{\ballP{\radius}{G}\isomorphic\detGraph}+\smallo{1},
\end{align*}
which shows the statement.
\end{proof}

Finally, we again consider the random $\rootSet$--rooted graph $A_{\rootSet}$ introduced in \Cref{def:R_rooted}. Roughly speaking, the next lemma states that the local structure of $A_{\rootSet}$ is \lq robust\rq\ against minor changes of the class $\cl$ or the root set $\rootSet$.
\begin{lem}\label{lem:contiguous2}
Let $\cl$ be a class of graphs, $A=A(n)\ur\cl(n)$, and for each graph $H\in\cl$ let $\rootSet=\rootSet(H)\subseteq\vertexSet{H}$ be a non--empty subset.
\begin{enumerate}[label=\normalfont(\roman*)]
\item\label{lem:contiguous2A}
Let $\tilde{\cl}\subseteq\cl$ be a subclass and $\tilde{A}=\tilde{A}(n)\ur\tilde{\cl}(n)$. If \whp\ $A\in\tilde{\cl}$, then $\contiguous{A_\rootSet}{\tilde{A}_\rootSet}$.
\item\label{lem:contiguous2B}
For each $H\in\cl$ let $\rootSet'=\rootSet'(H)\subseteq\vertexSet{H}$ be a non--empty subset. If \whp\ $\left|\symmetricDifference{\rootSet(A)}{\rootSet'(A)}\right|=\smallo{\left|\rootSet(A)\right|}$, then $\contiguous{A_{\rootSet}}{A_{\rootSet'}}$.
\end{enumerate}
\end{lem}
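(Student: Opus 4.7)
The plan is to verify both statements by bounding, for a fixed rooted graph $\detGraph$ and $\radius\in\N$, the difference $\big|\prob{\ballP{\radius}{A_\rootSet}\isomorphic\detGraph}-\prob{\ballP{\radius}{\tilde{A}_\rootSet}\isomorphic\detGraph}\big|$ in part \ref{lem:contiguous2A} (and analogously for \ref{lem:contiguous2B}) directly by unfolding the definitions.

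For part \ref{lem:contiguous2A} the key observation is that conditioning the uniform random graph $A$ on the event $\{A\in\tilde{\cl}\}$ yields exactly the distribution of $\tilde{A}$, since a uniform measure on $\cl(n)$ restricted to $\tilde{\cl}(n)$ is the uniform measure on $\tilde{\cl}(n)$. Hence by the law of total probability
\begin{align*}
\prob{\ballP{\radius}{A_\rootSet}\isomorphic\detGraph}&=\prob{A\in\tilde{\cl}}\condprob{\ballP{\radius}{A_\rootSet}\isomorphic\detGraph}{A\in\tilde{\cl}}+\prob{A\notin\tilde{\cl}}\condprob{\ballP{\radius}{A_\rootSet}\isomorphic\detGraph}{A\notin\tilde{\cl}}\\
&=(1-\smallo{1})\prob{\ballP{\radius}{\tilde{A}_\rootSet}\isomorphic\detGraph}+\smallo{1},
\end{align*}
using $\prob{A\in\tilde{\cl}}=1-\smallo{1}$ by hypothesis and bounding the second conditional probability by $1$. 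Rearranging gives the claim, so \ref{lem:contiguous2A} is essentially immediate.

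For part \ref{lem:contiguous2B} I first work pointwise: fix a graph $H\in\cl(n)$ and set $X_v:=\mathbbm{1}[\ball{\radius}{H}{v}\isomorphic \detGraph]\in\{0,1\}$. Writing $S_1=\rootSet(H)\cap \rootSet'(H)$, $S_2=\rootSet(H)\setminus\rootSet'(H)$, $S_3=\rootSet'(H)\setminus\rootSet(H)$, the conditional probabilities given $A=H$ become $|\rootSet(H)|^{-1}\sum_{v\in \rootSet(H)}X_v$ and $|\rootSet'(H)|^{-1}\sum_{v\in \rootSet'(H)}X_v$, respectively. Splitting the sums over $S_1,S_2,S_3$ and using the trivial bound $X_v\leq 1$, their difference is at most
\begin{align*}
\Big|\tfrac{1}{|\rootSet(H)|}-\tfrac{1}{|\rootSet'(H)|}\Big||S_1|+\tfrac{|S_2|}{|\rootSet(H)|}+\tfrac{|S_3|}{|\rootSet'(H)|}\leq \tfrac{\big||\rootSet(H)|-|\rootSet'(H)|\big|}{|\rootSet'(H)|}+\tfrac{|\symmetricDifference{\rootSet(H)}{\rootSet'(H)}|}{\min\{|\rootSet(H)|,|\rootSet'(H)|\}}.
\end{align*}
Whenever $|\symmetricDifference{\rootSet(H)}{\rootSet'(H)}|\leq \varepsilon |\rootSet(H)|$ for a sufficiently small $\varepsilon$, the right hand side is $\bigo{\varepsilon}$; in particular on the subclass $\cl^\ast(n):=\{H\in\cl(n):|\symmetricDifference{\rootSet(H)}{\rootSet'(H)}|\leq \varepsilon_n |\rootSet(H)|\}$ for an appropriately chosen $\varepsilon_n=\smallo{1}$, the pointwise difference is $\smallo{1}$ uniformly in $H$.

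To conclude \ref{lem:contiguous2B} I split the total probability into the contribution from $H\in\cl^\ast(n)$ and from $H\notin \cl^\ast(n)$. On $\cl^\ast(n)$ the pointwise bound gives a contribution of $\smallo{1}$; on its complement, the hypothesis $\prob{A\in\cl^\ast}=1-\smallo{1}$ bounds the total mass by $\smallo{1}$, so the contribution is also $\smallo{1}$ (using the trivial bound on conditional probabilities). Since $\detGraph$ and $\radius$ were arbitrary this establishes $\contiguous{A_\rootSet}{A_{\rootSet'}}$. I do not anticipate any real obstacle: both parts are routine applications of conditioning, the only mild care required is in \ref{lem:contiguous2B} where one must track the normalisation $|\rootSet'(H)|=(1+\smallo{1})|\rootSet(H)|$ to absorb the difference of the denominators.
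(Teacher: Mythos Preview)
Your proof is correct. Part~\ref{lem:contiguous2A} is essentially identical to the paper's argument: both condition on $\{A\in\tilde{\cl}\}$, use that the conditional law of $A$ is then that of $\tilde{A}$, and absorb the complementary event into an $\smallo{1}$ term.

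For part~\ref{lem:contiguous2B} you take a genuinely different route from the paper. The paper argues probabilistically via a coupling: since \whp\ $|\rootSet(A)\cap\rootSet'(A)|=(1+\smallo{1})|\rootSet(A)|=(1+\smallo{1})|\rootSet'(A)|$, both roots $\root_{\rootSet}$ and $\root_{\rootSet'}$ lie \whp\ in the intersection $\rootSet(A)\cap\rootSet'(A)$; but conditioned on landing in the intersection, each root is uniform on $\rootSet(A)\cap\rootSet'(A)$, so the two conditional ball--distributions coincide exactly, and one passes back to the unconditional statement with an $\smallo{1}$ error. You instead bound the difference of the two empirical averages $|\rootSet(H)|^{-1}\sum_{v\in\rootSet(H)}X_v$ and $|\rootSet'(H)|^{-1}\sum_{v\in\rootSet'(H)}X_v$ pointwise in $H$ by splitting over $S_1,S_2,S_3$, and then average over $H$ using the whp hypothesis. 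Your approach is more hands--on and makes the dependence on $|\symmetricDifference{\rootSet}{\rootSet'}|/|\rootSet|$ fully explicit; the paper's approach is slicker in that it avoids any arithmetic with the normalisations by reducing both sides to the same conditional law. Both are short and neither gains real generality over the other.
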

\begin{proof}
Let $\detGraph$ be a rooted graph and $\radius\in\N$. Conditioned on the event $A\in\tilde{\cl}$ the random rooted graph $A_\rootSet$ is distributed like $\tilde{A}_{\rootSet}$. Hence, we obtain
\begin{align*}
\prob{\ballP{\radius}{A_{\rootSet}}\isomorphic\detGraph}&=\prob{A\in \tilde{\cl}}\condprob{\ballP{\radius}{A_{\rootSet}}\isomorphic\detGraph}{A\in \tilde{\cl}}+\smallo{1}
\\
&=\left(1-\smallo{1}\right)\prob{\ballP{\radius}{\tilde{A}_{\rootSet}}\isomorphic\detGraph}+\smallo{1}
\\
&=\prob{\ballP{\radius}{\tilde{A}_{\rootSet}}\isomorphic\detGraph}+\smallo{1},
\end{align*}
which shows \ref{lem:contiguous2A}. For \ref{lem:contiguous2B} let $A_\rootSet=\rootedGraph{A}{\root_{\rootSet}}$ and $A_{\rootSet'}=\rootedGraph{A}{\root_{\rootSet'}}$, i.e. $\root_{\rootSet}\ur\rootSet(A)$ and $\root_{\rootSet'}\ur\rootSet'(A)$. Since \whp\ $\left|\symmetricDifference{\rootSet(A)}{\rootSet'(A)}\right|=\smallo{\left|\rootSet(A)\right|}$, we have \whp\ $\left|\rootSet(A)\cap\rootSet'(A)\right|=\left(1+\smallo{1}\right)\left|\rootSet(A)\right|=\left(1+\smallo{1}\right)\left|\rootSet'(A)\right|$. Hence, \whp\ $\root_{\rootSet}, \root_{\rootSet'}\in \rootSet(A)\cap\rootSet'(A)$. Using that we obtain
\begin{align*}
\prob{\ball{\radius}{A}{\root_{\rootSet}}\isomorphic\detGraph}&=\condprob{\ball{\radius}{A}{\root_{\rootSet}}\isomorphic\detGraph}{\root_{\rootSet}\in \rootSet(A)\cap\rootSet'(A)}+\smallo{1}
\\
&=\condprob{\ball{\radius}{A}{\root_{\rootSet'}}\isomorphic\detGraph}{\root_{\rootSet'}\in \rootSet(A)\cap\rootSet'(A)}+\smallo{1}
\\
&=\prob{\ball{\radius}{A}{\root_{\rootSet'}}\isomorphic\detGraph}+\smallo{1},
\end{align*}
which proves \ref{lem:contiguous2B}.
\end{proof}

\section{Proof strategy for main results}\label{sec:strategy}
\subsection{Local structure of the \ER\ random graph}\label{sub:strategy_er}
In order to show \Cref{thm:main1} we will use a result of Britikov \cite{uni} which implies that the random planar graph $\planargraph(n,m)$ \lq behaves\rq\ similarly like the \ER\ random graph $G(n,m)$ as long as $m\leq n/2+\bigo{n^{2/3}}$(see \Cref{thm:non_complex}). Then we will obtain \Cref{thm:main1} by analysing the local structure of $G(n,m)$ (see \Cref{lem:local_er}).

\subsection{Global and local structure of the random planar graph}
If $m$ is as in \Cref{thm:main2,thm:main3,thm:main4} we cannot transfer asymptotic results from $G(n,m)$ to $\planargraph=\planargraph(n,m)$ any more. Roughly speaking, we will show the following instead. The largest component $\Largestcomponent=\largestcomponent{\planargraph}$ of $\planargraph$ consists of a family of trees which is connected via a \lq small\rq\ graph of minimum degree at least two. Thus, a randomly chosen root $\root_L$ from $\Largestcomponent$ will typically lie \lq far away\rq\ from all vertices which are not in the same tree as $\root_L$. This will imply that the local limit of $\Largestcomponent$ coincides with that of a random tree, which is known to be the Skeleton Tree $\skeletonTree$ (see \Cref{thm:limit_tree1}). In addition, we will see that the graph $\Rest=P\setminus\Largestcomponent$ outside the largest component $\Largestcomponent$ \lq behaves\rq\ asymptotically like the \ER\ random graph $G(n_U, n_U/2)$ for a suitable $n_U=n_U(n)\in\N$. Thus, the local limit of $\Rest$ will be $\gwt{1}$, as it is well known that this is the case for $G(n_U, n_U/2)$. Finally for the local structure of $\planargraph$, we use the known fact that \whp\ $\numberVertices{\Largestcomponent}=\left(c-1+\smallo{1}\right)n$, where $c:=\lim_{n\to\infty} 2m/n$ (see \Cref{thm:internal_structure}\ref{thm:internal_structure3}). Thus, the probability that the root $\root$ is in the largest component tends to $(c-1)$ and the local limit of $P=\Largestcomponent \cup\Rest$ will be a \lq linear combination\rq\ of the local limit of $\Largestcomponent$, which is $\skeletonTree$, and that of $\Rest$, which is $\gwt{1}$, i.e. $\left(c-1\right)\skeletonTree+\left(2-c\right)\gwt{1}$. We refer to \Cref{fig:local_structure_planar} for an illustration of the \lq typical\rq\ structure of $\planargraph(n,m)$.

\begin{figure}[t]
	\begin{tikzpicture}[scale=0.75, line width=0.4mm, every node/.style={circle,fill=gray, inner sep=0, minimum size=0.22cm}]
		\node (A) at (0,0) [draw,rectangle]  {};
		\node (B) at (0.5,-0.95) [draw,rectangle]  {};
		\node (C) at (1.5,-1.4) [draw,rectangle]  {};
		\node (D) at (2.5,-0.95) [draw,rectangle]  {};
		\node (E) at (3,0) [draw,rectangle]  {};
		\node (F) at (2.5,0.95) [draw,rectangle]  {};
		\node (G) at (1.5,1.4) [draw,rectangle]  {};
		\node (H) at (0.5,0.95) [draw,rectangle]  {};
		\draw[-] (A) to (B);
		\draw[-] (B) to (C);
		\draw[-] (C) to (D);
		\draw[-] (D) to (E);
		\draw[-] (E) to (F);
		\draw[-] (F) to (G);
		\draw[-] (G) to (H);
		\draw[-] (H) to (A);
		\draw[-] (C) to (F);
		\draw[-] (C) to (H);
		
		\node (I) at (4.2,0) [draw,rectangle]  {};
		\node (J) at (5,1) [draw,rectangle]  {};
		\node (K) at (6.2,0.7) [draw,rectangle]  {};
		\node (L) at (6.2,-0.7) [draw,rectangle]  {};
		\node (M) at (5,-1) [draw,rectangle]  {};
		\draw[-] (I) to (J);
		\draw[-] (J) to (K);
		\draw[-] (K) to (L);
		\draw[-] (L) to (M);
		\draw[-] (M) to (I);
		\draw[-] (E) to (I);
		
		\node (N) at (5.8,-1.7) [draw]  {};
		\node (O) at (5.95,-1.15) [draw]  {};
		\node (P) at (6.6,-1.3) [draw]  {};
		\node (Q) at (7.4,-1.1) [draw]  {};
		\node (R) at (8.1,-1.5) [draw]  {};
		\node (S) at (8.1,-0.7) [draw]  {};
		\node (T) at (8.8,-0.4) [draw]  {};
		\node (U) at (8.8,-1.0) [draw]  {};
		\node (V) at (9.5,-0.7) [draw]  {};
		\node (W) at (9.5,-0.1) [draw]  {};
		\node (X) at (9.5,-1.3) [draw]  {};
		\node (Y) at (10.2,-0.7) [draw]  {};
		\node (Z) at (10.2,-1.3) [draw]  {};
		\draw[-] (M) to (N);
		\draw[-] (M) to (O);
		\draw[-] (N) to (P);
		\draw[-] (P) to (Q);
		\draw[-] (Q) to (R);
		\draw[-] (Q) to (S);
		\draw[-] (S) to (T);
		\draw[-] (S) to (U);
		\draw[-] (U) to (V);
		\draw[-] (U) to (X);
		\draw[-] (T) to (W);
		\draw[-] (V) to (Y);
		\draw[-] (X) to (Z);

		\node (1) at (3.3,-0.6) [draw]  {};
		\node (2) at (3.3,-1.3) [draw]  {};
		\node (3) at (4,-0.8) [draw]  {};
		\node (4) at (4,-1.3) [draw]  {};
		\node (5) at (4,-1.8) [draw]  {};
		\node (6) at (4.7,-1.8) [draw]  {};
		\draw[-] (D) to (1);
		\draw[-] (D) to (2);
		\draw[-] (2) to (3);
		\draw[-] (2) to (4);
		\draw[-] (2) to (5);
		\draw[-] (5) to (6);
		
		\node (7) at (7,1.1) [draw]  {};
		\node (8) at (7,0.35) [draw]  {};
		\node (9) at (7.7,1.1) [draw]  {};
		\draw[-] (K) to (7);
		\draw[-] (K) to (8);
		\draw[-] (7) to (9);
		
		\node (10) at (0.6,-1.5) [draw]  {};
		\node (11) at (-0.2,-1.2) [draw]  {};
		\node (12) at (-0.9,-0.8) [draw]  {};
		\node (13) at (-0.9,-1.5) [draw]  {};
		\node (14) at (-1.6,-1.7) [draw]  {};
		\node (15) at (-1.6,-1.2) [draw]  {};
		\node (16) at (-1.6,-0.4) [draw]  {};
		\node (17) at (-1.6,-0.8) [draw]  {};
		\node (18) at (-2.1,-0.2) [draw]  {};
		\node (19) at (-2.1,-0.8) [draw]  {};
		
		\draw[-] (C) to (10);
		\draw[-] (10) to (11);
		\draw[-] (11) to (12);
		\draw[-] (11) to (13);
		\draw[-] (13) to (14);
		\draw[-] (12) to (15);
		\draw[-] (12) to (16);
		\draw[-] (12) to (17);
		\draw[-] (16) to (18);
		\draw[-] (17) to (19);
		
		\node (20) at (4.4,1.2) [draw]  {};
		\node (21) at (5.6,1.2) [draw]  {};
		\draw[-] (J) to (20);
		\draw[-] (J) to (21);
		
		\node (22) at (-0.7,0.3) [draw]  {};
		\node (23) at (-1.3,0.5) [draw]  {};
		\draw[-] (A) to (22);
		\draw[-] (22) to (23);
		
		\node (24) at (3.2,1.2) [draw]  {};
		\draw[-] (F) to (24);
		%%%%%%%%%%%%%%%%%%%%%%%%%%%%%%%%%%%%%
		\node (25) at (12.5,1.4) [draw]  {};
		\node (26) at (12.5,0.8) [draw]  {};
		\node (27) at (12.5,0.2) [draw]  {};
		\node (28) at (12.8,-0.4) [draw]  {};
		\node (29) at (12.1,-0.4) [draw]  {};
		\node (30) at (12.1,-1) [draw]  {};
		\node (31) at (11.8,-1.6) [draw]  {};
		\node (32) at (12.4,-1.6) [draw]  {};
		
		\draw[-] (25) to (26);
		\draw[-] (26) to (27);
		\draw[-] (27) to (28);
		\draw[-] (27) to (29);
		\draw[-] (29) to (30);
		\draw[-] (30) to (31);
		\draw[-] (30) to (32);
		
		\node (33) at (14,1.4) [draw]  {};
		\node (34) at (13.6,0.8) [draw]  {};
		\node (35) at (14.4,0.8) [draw]  {};
		\node (36) at (14.1,0.2) [draw]  {};
		\node (37) at (14.7,0.2) [draw]  {};
		\node (38) at (14.1,-0.4) [draw]  {};
		\draw[-] (33) to (34);
		\draw[-] (33) to (35);
		\draw[-] (35) to (36);
		\draw[-] (35) to (37);
		\draw[-] (36) to (38);
		
		\node (39) at (13.5,-1.2) [draw]  {};
		\node (40) at (14.1,-1.2) [draw]  {};
		\node (41) at (14.7,-0.8) [draw]  {};
		\node (42) at (14.7,-1.6) [draw]  {};
		\draw[-] (39) to (40);
		\draw[-] (40) to (41);
		\draw[-] (40) to (42);
		
		\node (43) at (15.4,1.3) [draw]  {};
		\node (44) at (16.1,1.3) [draw]  {};
		\draw[-] (43) to (44);
		
		\node (45) at (15.4,0.6) [draw]  {};
		\node (46) at (16.1,0.6) [draw]  {};
		\draw[-] (45) to (46);
		
		\node (47) at (15.4,-0.1) [draw]  {};
		\node (48) at (16.1,-0.1) [draw]  {};
		\draw[-] (47) to (48);
		
		\node (49) at (15.4,-0.8) [draw]  {};
		\node (50) at (16.1,-0.8) [draw]  {};
		\draw[-] (49) to (50);
		
		\node (51) at (15.4,-1.5) [draw]  {};
		\node (52) at (16.1,-1.5) [draw]  {};
		\draw[-] (51) to (52);
		
		\node (53) at (17.1,1.4) [draw]  {};
		\node (54) at (17.9,1.4) [draw]  {};
		\node (55) at (17.1,0.8) [draw]  {};
		\node (56) at (17.9,0.8) [draw]  {};
		\node (57) at (17.1,0.2) [draw]  {};
		\node (58) at (17.9,0.2) [draw]  {};
		\node (59) at (17.1,-0.4) [draw]  {};
		\node (60) at (17.9,-0.4) [draw]  {};
		\node (61) at (17.1,-1) [draw]  {};
		\node (62) at (17.9,-1) [draw]  {};
		\node (63) at (17.1,-1.6) [draw]  {};
		\node (64) at (17.9,-1.6) [draw]  {};
	\end{tikzpicture}
	\caption{\lq Typical\rq\ structure of the random planar graph $P(n,m)$ when $m$ is as in \Cref{thm:main2,thm:main3,thm:main4}: The largest component consists of trees which are connected via a \lq small\rq\ graph of minimum degree at least two (with vertices marked by squares). The remaining part behaves similarly like the \ER\ random graph $G(n_U,n_U/2)$ for some $n_U=n_U(n)\in\N$.}
	\label{fig:local_structure_planar}
\end{figure}
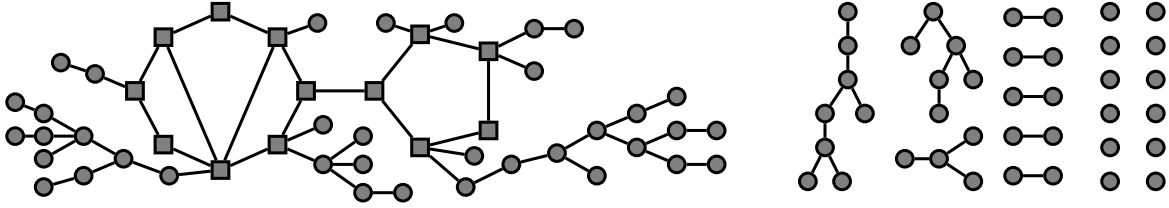

To make that more precise, we use the graph decomposition introduced in \Cref{sub:decomposition}. We recall that the complex part $Q=\complexpart{\planargraph}$ is the union of all components of $P$ that contain at least two cycles and the remaining part $\Restcomplex=\restcomplex{\planargraph}$ is called the non--complex part. Instead of dividing $\planargraph$ into $\Largestcomponent$ and $\Rest$, we actually consider $Q$ and $\Restcomplex$. Then we will use that \whp\ the largest component $\largestcomponent{Q}$ of $Q$ coincides with $\Largestcomponent$ (see \Cref{thm:internal_structure}\ref{thm:internal_structure6}) and $\rest{Q}=Q\setminus\largestcomponent{Q}$ is \lq small\rq\ compared to $Q$ and $\Restcomplex$ (see \Cref{thm:internal_structure}\ref{thm:internal_structure7} and \ref{thm:internal_structure8}). This implies $\contiguous{\unbiased{\Largestcomponent}}{\unbiased{Q}}$ and $\contiguous{\unbiased{\Rest}}{\unbiased{\Restcomplex}}$. In other words, it suffices to determine the local limits of $Q$ and $\Restcomplex$. 

\subsection{Local structure of the complex part}\label{sub:strategy_complex}
To determine the local structure of the complex part $Q$, we will use the concept of conditional random rooted graphs (see \Cref{sub:conditional_random_graphs}): For a given core $C$ and $q\in\N$, we consider $Q$ conditioned on the event that the core $\core{\planargraph}$ of $\planargraph$ is $C$ and that $\numberVertices{Q}=q$. Then $Q$ is distributed like $Q(C,q)$ as defined in \Cref{def:random_complex}, which is a graph chosen uniformly at random from the class of all complex graphs with core $C$ and vertex set $[q]$. As \whp\ $\planargraph$ satisfies $\numberVertices{\core{\planargraph}}=\smallo{\numberVertices{\complexpart{\planargraph}}}$ (see \Cref{thm:internal_structure}\ref{thm:internal_structure2}), it suffices to determine the local structure of $Q(C,q)$ for all $C$ and $q$ fulfilling $\numberVertices{C}=\smallo{q}$. We note that $Q(C,q)$ can be constructed by choosing a random forest $F=F(q, \numberVertices{C})$ on vertex set $[q]$ with $\numberVertices{C}$ tree components such that all vertices from $C$, i.e. vertices in $[\numberVertices{C}]$, lie in different tree components (cf. \Cref{def:random_forest}). Then we obtain $Q(C,q)$ by replacing each vertex $v$ in $C$ by the tree component of $F$ which is rooted at $v$. Let $\root_Q$ be a vertex chosen uniformly at random from $\vertexSet{Q(C,q)}$, $T$ the tree component of $F$ containing $\root_Q$, and $\root_T$ the root of $T$, i.e. $\root_T$ is the unique vertex of $T$ lying in the core $C$. As $\numberVertices{C}=\smallo{q}$, the tree $T$ should be \lq large\rq, i.e. \whp\ $\numberVertices{T}=\smallomega{1}$. We will use this fact to show that then also \whp\ $\distance{T}{\root_Q}{\root_T}=\smallomega{1}$ (see \Cref{lem:limit_forest}\ref{lem:limit_forest2}). In other words, \whp\ there is no vertex from the core lying in the neighbourhood of radius $\radius$ around $\root_Q$. In particular, this shows that $\contiguous{\unbiased{Q(C,q)}}{\unbiased{T}}$. Then we will use that $T$ \lq behaves\rq\ similarly like a random tree and therefore has the same local limit, which is known to be $\skeletonTree$ (see \Cref{thm:limit_tree1} and \Cref{lem:limit_forest}\ref{lem:limit_forest1}). Thus, we obtain $\distributionalLimit{\unbiased{Q(C,q)}}{\skeletonTree}$ (see \Cref{lem:local_random_complex}) and therefore also $\distributionalLimit{\unbiased{\complexpart{\planargraph}}}{\skeletonTree}$ (see \Cref{lem:local_complex}) and $\distributionalLimit{\unbiased{\Largestcomponent}}{\skeletonTree}$.

\subsection{Local structure of the non--complex part}\label{sub:strategy_non_complex}
Similarly, we determine the local structure of the non--complex part $\Restcomplex$. Conditioned on $\numberVertices{\Restcomplex}=n_U$ and $\numberEdges{\Restcomplex}=m_U$ for $n_U, m_U \in\N$, the non--complex part $\Restcomplex$ is distributed like a graph $U(n_U,m_U)$ chosen uniformly at random from the class of all graphs without complex components and $n_U$ vertices and $m_U$ edges as defined in \Cref{def:random_non_complex}. It is known that \whp\ $\numberEdges{U}=\numberVertices{U}/2+\bigo{h\numberVertices{U}^{2/3}}$ for each function $h=h(n)=\smallomega{1}$ (see \Cref{thm:internal_structure}\ref{thm:internal_structure5}). Thus, we can restrict our considerations to the case $m_U=n_U/2+\bigo{n_U^{2/3}}$. In this regime the probability that the \ER\ random graph $G=G(n_U,m_U)$ has no complex component is bounded away from zero (see \Cref{thm:non_complex}). Hence, each graph property that holds \whp\ in $G(n_U,m_U)$ is also true \whp\ in $U(n_U,m_U)$. We observe that this is not enough to deduce $\contiguous{\unbiased{G(n_U,m_U)}}{\unbiased{U(n_U,m_U)}}$. However, we will use the first and second moment method to count the number of vertices $v$ in $G=G(n_U,m_U)$ satisfying $\ball{\radius}{G}{v}\isomorphic\detGraph$ for a fixed rooted graph $\detGraph$ and $\radius\in\N$. More precisely, we will show that \whp\ this number is $\left(1+\smallo{1}\right)\prob{\ballP{\radius}{\gwt{1}}\isomorphic\detGraph}\cdot n_U$ (see \Cref{lem:local_er}). In particular, this is also true for $U(n_U,m_U)$, which immediately implies $\distributionalLimit{\unbiased{U(n_U,m_U)}}{\gwt{1}}$ (see \Cref{coro:local_no_complex}). Hence, we obtain $\distributionalLimit{\unbiased{U}}{\gwt{1}}$ (see \Cref{lem:local_non_complex}) and $\distributionalLimit{\unbiased{\Rest}}{\gwt{1}}$.

\section{Local structure of the \ER\ random graph and the non--complex part}\label{sec:local_er_non_complex}
We start this section with a stronger statement than \Cref{thm:local_er} on the local structure of the \ER\ random graph $G(n,m)$, which will have two main applications. Firstly, we will use \Cref{lem:local_er} in the proof of \Cref{thm:main1}, where the random planar graph $\planargraph=\planargraph(n,m)$ \lq behaves\rq\ similarly like $G(n,m)$ (see \Cref{sub:strategy_er}). Secondly, later in this section \Cref{lem:local_er} will be the starting point of determining the local limit of the non--complex part $\restcomplex{\planargraph}$ of $\planargraph$ as described in \Cref{sub:strategy_non_complex}.
\begin{lem}\label{lem:local_er}
Let $m=m(n)=\alpha n/2$ where $\alpha=\alpha(n)$ tends to a constant $c\geq 0$, $\detGraph$ be a rooted graph, $\radius\in\N$, and $G=G(n,m)\ur\erClass(n,m)$. Then \whp
\begin{align*}
n^{-1}\cdot\Big|\setbuilderBig{v\in\vertexSet{G}}{\ball{\radius}{G}{v}\isomorphic\detGraph}\Big|=\left(1+\smallo{1}\right)\prob{\ballP{\radius}{\gwt{c}}\isomorphic\detGraph}.
\end{align*}
\end{lem}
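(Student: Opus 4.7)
The plan is a standard first-plus-second moment argument. Let $X := \left|\setbuilder{v\in\vertexSet{G}}{\ball{\radius}{G}{v}\isomorphic\detGraph}\right|$ and $p := \prob{\ballP{\radius}{\gwt{c}}\isomorphic\detGraph}$. Since the vertex labels of $G(n,m)$ are exchangeable, $\expec{X}/n = \prob{\ball{\radius}{G}{\root}\isomorphic\detGraph}$ for $\root\ur\vertexSet{G}$, and so \Cref{thm:local_er} immediately yields $\expec{X}=(1+\smallo{1})pn$. If $p=0$ then Markov's inequality gives $X=\smallo{n}$ whp and we are finished, so we may assume $p>0$; to obtain concentration via Chebyshev's inequality it then suffices to prove $\variance{X}=\smallo{n^2}$.

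Write $X=\sum_{v\in[n]} X_v$ with $X_v := \mathbbm{1}\{\ball{\radius}{G}{v}\isomorphic\detGraph\}$. By exchangeability, $\mathrm{Cov}(X_u, X_v)$ is the same for every pair $u\ne v$, so the variance bound reduces to showing that for two uniformly chosen distinct vertices $u_1, u_2\in[n]$ (independent of $G$) one has
\begin{equation*}
q \;:=\; \prob{\ball{\radius}{G}{u_1}\isomorphic\detGraph,\;\ball{\radius}{G}{u_2}\isomorphic\detGraph} \;=\; p^2+\smallo{1}.
\end{equation*}
I would split $q$ according to whether $\distance{G}{u_1}{u_2}>2\radius$ or not. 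Applying \Cref{thm:local_er} at radius $2\radius$ (or a direct first-moment walk-counting estimate in $G(n,m)$) shows that $\expec{|\ball{2\radius}{G}{u_1}|}=\bigo{1}$, hence $\prob{\distance{G}{u_1}{u_2}\le 2\radius}=\bigo{1/n}$, so the contribution of the close case to $q$ is of order $\smallo{1}$.

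For the far case, where the two balls $\ball{\radius}{G}{u_1}$ and $\ball{\radius}{G}{u_2}$ are vertex-disjoint, I would establish a two-point extension of \Cref{thm:local_er}: the joint law of $(\ball{\radius}{G}{u_1}, \ball{\radius}{G}{u_2})$ converges to that of two independent copies of $\ballP{\radius}{\gwt{c}}$. This can be obtained either by running the breadth-first exploration argument of \cite{local_limit_er_proof} from both roots in parallel, or by first proving the analogue for $G(n,p)$ with $p\approx c/n$ -- where disjoint neighbourhoods of fixed vertices involve disjoint edge sets and are therefore genuinely independent -- and then transferring to $G(n,m)$ via the standard comparison between the two models. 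The main obstacle is precisely this joint convergence: the fixed edge count in $G(n,m)$ introduces a weak coupling between distant regions of the graph, and one must verify that this coupling is too weak to affect the joint law of two bounded-radius neighbourhoods in the limit.
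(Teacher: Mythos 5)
Your plan---first and second moment, with the second moment split according to whether the two balls are disjoint---is the same structure as the paper's proof in Appendix~A, so the strategy is right. But the proposal halts at the decisive step: you reduce the lemma to a ``two-point'' local limit statement (joint asymptotic independence of $\ball{\radius}{G}{u_1}$ and $\ball{\radius}{G}{u_2}$ in $G(n,m)$), correctly observe that this is not automatic because the fixed edge count weakly couples distant regions, and then leave it as something to ``establish.'' That joint estimate is the content of the lemma, so as written this is a plan rather than a proof. A smaller issue: deducing $\expec{\numberVertices{\ball{2\radius}{G}{u_1}}}=\bigo{1}$ from \Cref{thm:local_er} is not valid as stated, since convergence in distribution does not control expectations without uniform integrability, though the direct walk-counting estimate you mention in parentheses does give the bound.

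The paper resolves the coupling issue by exact enumeration in $\mathcal{G}(n,m)$ rather than via $G(n,p)$ or via invoking \Cref{thm:local_er}. It first reduces to plane trees (\Cref{lem:local_er2}), which removes automorphism bookkeeping from the moment computations and is a device your sketch omits. Then, for a fixed plane tree $\planeTree$, it counts the graphs $H$ with $\ball{\radius}{H}{v}=\planeTree$ by choosing the successive neighbourhood sets and then the remaining $m-\numberVertices{\planeTree}+1$ edges among the allowed pairs; the ratio of this count to $\binom{\binom{n}{2}}{m}$ is an elementary asymptotic that yields $\prob{X_v=1}=(1+\smallo{1})\gamma$. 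Repeating the construction for two roots with disjoint balls gives $(1+\smallo{1})\gamma^2\binom{\binom{n}{2}}{m}$, and the close case is disposed of by showing that the number of graphs containing a $v$--$w$ path of length $i\le 2\radius$ is at most $(n-2)\cdots(n-i)\binom{\binom{n}{2}-i+1}{m-i}=\bigo{n^{-1}}\binom{\binom{n}{2}}{m}$. This is the concrete version of the ``far case'' computation that your proposal describes but does not carry out.
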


A version of \Cref{lem:local_er} for the binomial random graph $G(n,p)$ is well known and its proof can be found e.g. in {\cite[Chapter 2]{vdH2}}, but for the sake of completeness we provide a proof of \Cref{lem:local_er} in \Cref{app:local_er}. In the next step, we transfer the statement of \Cref{lem:local_er} to the random graph $U(n,m)$, which is a graph chosen uniformly at random from the class $\nocomplexClass(n,m)$ of all graphs without complex components on vertex set $[n]$ with $m$ edges (cf. \Cref{def:random_non_complex}). To that end, we use the following result of Britikov \cite{uni}.

\begin{thm}[\cite{uni}]\label{thm:non_complex}
	Let $m=m(n)\leq n/2+\bigo{n^{2/3}}$ and $G=G(n,m)\ur \erClass(n,m)$ be the \ER\ random graph. Then 
	\begin{align*}
		\liminf_{n \to \infty}~ \prob{G \text{ has no complex component }}>0.
	\end{align*}
\end{thm}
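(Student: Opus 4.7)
The plan is to derive the theorem via the classical analytic-combinatorial route: first obtain an exact formula for $|\nocomplexClass(n,m)|$ from exponential generating functions, and then control its asymptotics by saddle-point analysis near the singularity of the tree function.

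First, I would encode non-complex graphs by bivariate EGFs, marking vertices by $x$ and edges by an auxiliary variable $y$. Let $T(x)$ be the tree function satisfying $T(x) = x e^{T(x)}$, let $U(x) = T(x) - T(x)^2/2$ be the EGF of unrooted labelled trees, and let $V(x) = \frac{1}{2}\log\frac{1}{1-T(x)} - \frac{T(x)}{2} - \frac{T(x)^2}{4}$ be the EGF of connected unicyclic graphs. Since every tree on $k$ vertices carries $k-1$ edges and every unicyclic component on $k$ vertices carries $k$ edges, the bivariate EGF of non-complex graphs is $\exp\bigl(y^{-1} U(xy) + V(xy)\bigr)$. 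Setting $z = xy$ and extracting the coefficient of $x^n y^m$ forces the number of tree components to equal $n-m$, yielding the clean identity
\begin{align*}
|\nocomplexClass(n,m)| = \frac{n!}{(n-m)!}\, [z^n]\Bigl(U(z)^{n-m}\, e^{V(z)}\Bigr).
\end{align*}

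Next, I would perform a saddle-point analysis on the coefficient on the right-hand side. The tree function $T(z)$ has a square-root singularity at $z_0 = e^{-1}$, with the classical expansion $T(z) = 1 - \sqrt{2(1-ez)} + \bigo{1-ez}$ as $z \to z_0^-$. Consequently $U(z) = \tfrac{1}{2} - (1 - ez) + \bigo{(1-ez)^{3/2}}$ and $e^{V(z)} \sim e^{-3/4}\bigl(2(1-ez)\bigr)^{-1/4}$. Setting $k := n - m = n/2 + \bigo{n^{2/3}}$ and writing the coefficient as a contour integral $\frac{1}{2\pi i}\oint U(z)^k e^{V(z)} z^{-n-1}\, dz$, the stationary point of the integrand lies at distance $\Theta\bigl(n^{-2/3}\bigr)$ from $z_0$. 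After the Airy-type rescaling $z = z_0\bigl(1 - s\, n^{-2/3}\bigr)$, the integral localises in a window of width $n^{-2/3}$ around the saddle, and the rescaled integrand converges uniformly on compacts, leading to an asymptotic of the form
\begin{align*}
[z^n]\Bigl(U(z)^{n-m}\, e^{V(z)}\Bigr) = \Theta\Bigl(z_0^{-n}\,(1/2)^{n-m}\, n^{-5/6}\Bigr),
\end{align*}
where the implicit constant is a continuous, strictly positive function of the rescaled parameter $\lambda = n^{-2/3}(m - n/2)$.

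Finally, I would compare with $|\erClass(n,m)| = \binom{\binom{n}{2}}{m}$ using Stirling's formula. The exponential factors $z_0^{-n}$, $(1/2)^{n-m}$, and $n!/(n-m)!$ combine with the Stirling asymptotic of the binomial coefficient in such a way that the ratio $|\nocomplexClass(n,m)|/|\erClass(n,m)|$ converges to a strictly positive continuous function of $\lambda$, bounded away from zero on any compact range of $\lambda$. Since $m \leq n/2 + \bigo{n^{2/3}}$ keeps $\lambda$ bounded from above, this yields $\liminf_{n\to\infty}\prob{G \text{ has no complex component}} > 0$. The main obstacle is the precise saddle-point analysis in the critical window, where the stationary point and the singularity of $T$ coalesce at rate $n^{-2/3}$ and standard singularity analysis breaks down; one must instead work with the uniform Airy-type expansion for coefficients of generating functions with this particular singularity structure, which is the technical heart of Britikov's original proof and of related analyses of the critical window by \Luczak\ and others.
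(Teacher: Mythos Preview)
The paper does not prove this statement at all: \Cref{thm:non_complex} is quoted as a result of Britikov \cite{uni} and is used as a black box. There is therefore no ``paper's own proof'' to compare your proposal against.

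That said, your sketch is a faithful outline of Britikov's original argument (and of the closely related analysis by Janson, Knuth, \Luczak, and Pittel): the exact EGF identity for $|\nocomplexClass(n,m)|$ via the tree function, followed by an Airy-type saddle-point analysis in the critical window where the saddle collides with the square-root singularity of $T$ at rate $n^{-2/3}$. One small point worth tightening: you conclude by saying the limiting ratio is bounded away from zero on any compact range of $\lambda = n^{-2/3}(m-n/2)$, but the hypothesis $m \le n/2 + \bigo{n^{2/3}}$ only bounds $\lambda$ from above, so you must also cover $\lambda \to -\infty$. This is harmless---in the subcritical regime the probability of having no complex component tends to $1$---but it should be stated explicitly to close the argument.
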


\Cref{thm:non_complex} implies that each graph property that holds \whp\ in $G(n,m)$ is also true \whp\ in $U(n,m)$ as long as $m\leq n/2+\bigo{n^{2/3}}$. In particular, we can combine it with \Cref{lem:local_er} to deduce the local limit of $U(n,m)$. 

\begin{coro}\label{coro:local_no_complex}
	Let $m=m(n)\leq n/2+\bigo{n^{2/3}}$ be such that $2m/n\to c\in[0,1]$ and $U=U(n,m)\ur\nocomplexClass(n,m)$. Then $\distributionalLimit{\unbiased{U}}{\gwt{c}}$.
\end{coro}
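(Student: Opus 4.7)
The plan is to transfer the local statistic from Lemma \ref{lem:local_er} to $U(n,m)$ via the conditioning argument and then deduce the desired local weak convergence by taking an expectation over the uniformly chosen root.

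Fix a rooted graph $\detGraph$ and $\radius\in\N$. For the \ER\ random graph $G=G(n,m)$, define the event
\begin{align*}
\cl_\detGraph \defined \setbuilderBig{H\in\erClass(n,m)}{\left|n^{-1}\left|\setbuilder{v\in\vertexSet{H}}{\ball{\radius}{H}{v}\isomorphic\detGraph}\right|-\prob{\ballP{\radius}{\gwt{c}}\isomorphic\detGraph}\right|\leq\varepsilon_n}
\end{align*}
for a suitable sequence $\varepsilon_n=\smallo{1}$ witnessing the $\smallo{1}$ in \Cref{lem:local_er}, so that $\prob{G\in\cl_\detGraph}\to 1$. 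First I would observe that the distribution of $U(n,m)$ coincides with that of $G(n,m)$ conditioned on the event $\class{B}$ that $G$ has no complex component; this is immediate from the uniform choice. By \Cref{thm:non_complex}, $\liminf \prob{\class{B}}>0$, so
\begin{align*}
\prob{U\in\cl_\detGraph}=\condprob{G\in\cl_\detGraph}{G\in\class{B}}\geq 1-\frac{\prob{G\notin\cl_\detGraph}}{\prob{\class{B}}}=1-\smallo{1}.
\end{align*}
Hence the same \whp\ local statistic that holds for $G(n,m)$ also holds for $U(n,m)$.

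Next, I would pass from this \whp\ statement about the empirical ball distribution to the local weak limit by a simple averaging. Writing $\unbiased{U}=\rootedGraph{U}{\root}$ with $\root\ur[n]$, we have
\begin{align*}
\prob{\ball{\radius}{U}{\root}\isomorphic\detGraph}=\expec{n^{-1}\left|\setbuilder{v\in\vertexSet{U}}{\ball{\radius}{U}{v}\isomorphic\detGraph}\right|}.
\end{align*}
The random variable inside the expectation is bounded by $1$ and equals $\prob{\ballP{\radius}{\gwt{c}}\isomorphic\detGraph}+\smallo{1}$ on the event $\left\{U\in\cl_\detGraph\right\}$, which occurs \whp. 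Therefore
\begin{align*}
\prob{\ball{\radius}{U}{\root}\isomorphic\detGraph}=\prob{\ballP{\radius}{\gwt{c}}\isomorphic\detGraph}+\smallo{1}.
\end{align*}
Since $\detGraph$ and $\radius$ were arbitrary, this gives $\distributionalLimit{\unbiased{U}}{\gwt{c}}$.

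The argument is essentially a transfer lemma, so there is no real obstacle; the only point that needs care is ensuring that we only need a \whp\ property of $G(n,m)$ (not a property that holds in expectation) in order to use the positive conditional probability of $\class{B}$ from \Cref{thm:non_complex}. Since \Cref{lem:local_er} delivers precisely a \whp\ statement, the transfer goes through cleanly for every fixed pair $\rootedGraph{\detGraph}{\radius}$.
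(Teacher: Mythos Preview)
Your proof is correct and follows essentially the same approach as the paper: transfer the \whp\ local statistic of \Cref{lem:local_er} from $G(n,m)$ to $U(n,m)$ using that $\prob{\class{B}}$ is bounded away from zero (\Cref{thm:non_complex}), then average over the uniform root to convert the \whp\ count into the required ball probability. The paper compresses the transfer step into a single sentence and writes the averaging as $\prob{\ballP{\radius}{\unbiased{U}}\isomorphic\detGraph}=n^{-1}\expec{X}$, but the logic is identical.
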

\begin{proof}
Let $\detGraph$ be a rooted graph and $\radius\in\N$. To simplify notation we set $\beta:=\prob{\ballP{\radius}{\gwt{c}}\isomorphic\detGraph}$. We consider the random variable
\begin{align*}
X:=\left|\setbuilder{v\in\vertexSet{U}}{\ball{\radius}{U}{v}\isomorphic\detGraph}\right|.
\end{align*}
By \Cref{lem:local_er} and \Cref{thm:non_complex} we have \whp\ $X=\left(1+\smallo{1}\right)\beta n$ and in particular $\expec{X}=\left(1+\smallo{1}\right)\beta n$. Hence, we obtain
\begin{align*}
\prob{\ballP{\radius}{\unbiased{U}}\isomorphic\detGraph}=n^{-1}\expec{X}=\left(1+\smallo{1}\right)\beta,
\end{align*}
which shows the statement.
\end{proof}

Finally, we use \Cref{coro:local_no_complex} to determine the local limit of the non--complex part $\restcomplex{\planargraph}$ of $\planargraph=\planargraph(n,m)$ under reasonable assumptions. Later we will see that in all considered cases $\planargraph$ satisfies these conditions.

\begin{lem}\label{lem:local_non_complex}
Let $\planargraph=\planargraph(n,m)\ur\planarclass(n,m)$ be the random planar graph and $\Restcomplex=\Restcomplex(\planargraph)$ the non--complex part of $\planargraph$ with  $\numberVerticesOutside=\numberVerticesOutside(\planargraph)$ vertices and $\numberEdgesOutside=\numberEdgesOutside(\planargraph)$ edges. Assume that $m=m(n)$ is such that \whp\ $n_U=\smallomega{1}$, $m_U\leq n_U/2+\bigo{hn_U^{2/3}}$ for each $h=h(n)=\smallomega{1}$, and $2m_U/n_U\to c\in [0,1]$. Then $\distributionalLimit{\planargraph_\Restcomplex}{\gwt{c}}$.
\end{lem}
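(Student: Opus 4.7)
The plan is to combine the conditional-random-graph machinery of \Cref{lem:conditional,lem:contiguous1,lem:contiguous2} with \Cref{coro:local_no_complex}, which identifies the local limit of the uniform random non-complex graph $U(\numberVerticesOutside,\numberEdgesOutside)$. First, using the three \whp\ conditions in the hypothesis, I would introduce a subclass $\tilde{\planarclass}(n) \subseteq \planarclass(n,m)$ consisting of those $H \in \planarclass(n,m)$ whose non-complex part $\restcomplex{H}$ satisfies $\numberVerticesOutside(H) \geq g(n)$, $\numberEdgesOutside(H) \leq \numberVerticesOutside(H)/2 + K\,\numberVerticesOutside(H)^{2/3}$, and $|2\numberEdgesOutside(H)/\numberVerticesOutside(H) - c| \leq \delta(n)$, where $g = g(n) = \smallomega{1}$, $K > 0$ is a sufficiently large constant, and $\delta(n) = \smallo{1}$ are chosen so that $\planargraph \in \tilde{\planarclass}$ \whp. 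Letting $\tilde{\planargraph}$ be uniform on $\tilde{\planarclass}(n)$, \Cref{lem:contiguous2}\ref{lem:contiguous2A} gives $\contiguous{\planargraph_{\Restcomplex}}{\tilde{\planargraph}_{\Restcomplex}}$, so by \Cref{lem:contiguous1} it is enough to prove $\distributionalLimit{\tilde{\planargraph}_{\Restcomplex}}{\gwt{c}}$.

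Next, I would invoke \Cref{lem:conditional} for $\tilde{\planarclass}$ with root-set $\rootSet(H) = \vertexSet{\restcomplex{H}}$ and function $\func(H) = (\numberVerticesOutside(H), \numberEdgesOutside(H))$. By the very definition of $\tilde{\planarclass}$, every feasible sequence $\seq = ((\nu_n,\mu_n))_{n \in \N}$ for $(\tilde{\planarclass}, \func)$ satisfies $\nu_n \to \infty$, $\mu_n \leq \nu_n/2 + \bigo{\nu_n^{2/3}}$, and $2\mu_n/\nu_n \to c$. Hence it suffices to establish $\distributionalLimit{\condGraph{\tilde{\planargraph}_{\Restcomplex}}{\seq}}{\gwt{c}}$ for every such feasible $\seq$.

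The decisive step is to identify this conditional distribution as $\unbiased{U(\nu_n,\mu_n)}$ from \Cref{def:random_non_complex}. For this I would condition further on the labelled complex part $\complexgraph = \complexpart{\tilde{\planargraph}}$, i.e.\ on its vertex set $\vertexSet{\complexgraph} \subseteq [n]$ together with its edges. Since any non-complex graph (a disjoint union of trees and unicyclic components) is automatically planar, fixing a planar $\complexgraph$ imposes no further restriction on $\restcomplex{\tilde{\planargraph}}$ beyond having vertex set $[n] \setminus \vertexSet{\complexgraph}$, containing $\mu_n$ edges, and itself being non-complex. Consequently, conditionally on $\complexgraph$ and on $\numberEdgesOutside = \mu_n$, $\restcomplex{\tilde{\planargraph}}$ is uniformly distributed over all non-complex graphs on $[n] \setminus \vertexSet{\complexgraph}$ with $\mu_n$ edges. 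Relabelling these vertices by $[\nu_n]$ and recalling that local rooted-isomorphism classes are label-invariant, the rooted graph $\condGraph{\tilde{\planargraph}_{\Restcomplex}}{\seq}$ agrees in distribution with $\unbiased{U(\nu_n,\mu_n)}$. \Cref{coro:local_no_complex} then yields $\distributionalLimit{\unbiased{U(\nu_n,\mu_n)}}{\gwt{c}}$, and assembling the ingredients through \Cref{lem:conditional,lem:contiguous1,lem:contiguous2} delivers the claim.

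The main obstacle is precisely this decomposition step: verifying cleanly that, conditional on the complex part and on $(\nu_n,\mu_n)$, the non-complex part is a uniform random non-complex graph on the residual labelled vertex set with $\mu_n$ edges. It rests entirely on the fact that non-complex graphs are always planar, so the planarity constraint on $\tilde{\planargraph}$ factorises into a constraint on $\complexgraph$ alone; the remaining steps are routine applications of the machinery built up in \Cref{sec:prelim}.
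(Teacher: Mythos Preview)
Your approach is essentially the same as the paper's: restrict to a subclass of $\planarclass(n,m)$ on which $(\numberVerticesOutside,\numberEdgesOutside)$ is well-behaved, condition on $(\numberVerticesOutside,\numberEdgesOutside)$ via \Cref{lem:conditional}, identify the conditional law of the rooted non-complex part with $\unbiased{U(\nu_n,\mu_n)}$, and invoke \Cref{coro:local_no_complex}. Your justification of the factorisation step (planarity places no constraint on the non-complex part once the complex part is fixed, because every non-complex graph is planar) is in fact more explicit than the paper's, which simply asserts the identification without comment.

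There is one technical slip. You claim one can choose a \emph{fixed} constant $K$ so that \whp\ $\numberEdgesOutside \le \numberVerticesOutside/2 + K\,\numberVerticesOutside^{2/3}$, but the hypothesis only guarantees this with $h(n)$ in place of $K$ for every $h(n)\to\infty$; equivalently, $(\numberEdgesOutside-\numberVerticesOutside/2)/\numberVerticesOutside^{2/3}$ is tight, which is weaker than being \whp\ bounded by a single constant. (For instance, if this ratio were standard normal for every $n$, then $\prob{Z>h(n)}\to 0$ for any $h\to\infty$, yet $\prob{Z>K}$ is a fixed positive number for each $K$.) Since \Cref{coro:local_no_complex} genuinely needs $\mu_n\le\nu_n/2+\bigo{\nu_n^{2/3}}$, this matters. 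The paper handles it by fixing an arbitrary $\delta>0$, choosing $K=K_\delta$ so that $\prob{\planargraph\in\cl}\ge 1-\delta$, proving the desired ball-probability identity up to an additive $\delta$, and then letting $\delta\downarrow 0$. Your route through \Cref{lem:contiguous2}\ref{lem:contiguous2A} requires \whp\ membership, so it does not apply directly; replacing that step by the paper's $\delta$-argument (or, equivalently, running your entire argument for each fixed $\delta$ and passing to the limit) closes the gap, and the rest of your proof goes through unchanged.
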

\begin{proof}
We will use \Cref{lem:conditional}. To that end, let $\cl(n)$ be the subclass of $\planarclass(n,m)$ that contains those graphs $H$ satisfying
\begin{align*}
\numberVerticesOutside(H)&=\smallomega{1},\\
\numberEdgesOutside(H)&\leq \numberVerticesOutside(H)/2+\bigo{\numberVerticesOutside(H)^{2/3}},\\
2\numberEdgesOutside(H)/\numberVerticesOutside(H)&=c+\smallo{1}.
\end{align*}
By assumption, we can choose the implicit constants in the equations above such that $\planargraph\in\cl:=\cup_n\cl(n)$ with a probability of at least $1-\delta$, so as to obtain for each rooted graph $\detGraph$ and $\radius\in\N$
\begin{align}\label{eq:10}
\prob{\ballP{\radius}{\planargraph_U}\isomorphic\detGraph}&=\prob{P\in\cl}\condprob{\ballP{\radius}{\planargraph_U}\isomorphic\detGraph}{P\in\cl}+\prob{P\notin\cl}\condprob{\ballP{\radius}{\planargraph_U}\isomorphic\detGraph}{P\notin\cl}\nonumber
\\
&\leq
\condprob{\ballP{\radius}{\planargraph_U}\isomorphic\detGraph}{P\in\cl}+\delta,
\end{align}
for an arbitrary $\delta>0$. Let $A_U=A_U(n)$ be the random rooted graph such that $A=A(n)\ur\cl(n)$ and given the realisation $H$ of $A$, the root is chosen uniformly at random from $\vertexSet{\restcomplex{H}}$. We define the function $\func$ such that $\func(H)=\left(\numberVerticesOutside(H), \numberEdgesOutside(H)\right)$ for each $H\in\cl$ and let $\seq=\left(\nu_n, \mu_n\right)_{n\in\N}$ be a sequence that is feasible for $(\cl, \func)$. We note that the local structure of $\condGraph{A_U}{\seq}$ is distributed like that of $\unbiased{U\left(\nu_n, \mu_n\right)}$, i.e. for each rooted graph $\detGraph$ and $\radius\in\N$ we have 
\begin{align*}
\prob{\ballP{\radius}{\condGraph{A_U}{\seq}}\isomorphic \detGraph}=\prob{\ballP{\radius}{\unbiased{U\left(\nu_n, \mu_n\right)}}\isomorphic \detGraph}.
\end{align*}
Thus, we obtain by \Cref{coro:local_no_complex} that $\distributionalLimit{\condGraph{A_U}{\seq}}{\gwt{c}}$. Combining that with \Cref{lem:conditional} yields $\distributionalLimit{A_U}{\gwt{c}}$, i.e. $\prob{\ballP{\radius}{A_U}\isomorphic\detGraph}=\prob{\ballP{\radius}{\gwt{c}}\isomorphic\detGraph}+\smallo{1}$. Conditioned on $\planargraph\in\cl$, $\planargraph$ is distributed like $A$. Hence, we have $\condprob{\ballP{\radius}{\planargraph_U}\isomorphic\detGraph}{P\in\cl}=\prob{\ballP{\radius}{A_U}\isomorphic\detGraph}=\prob{\ballP{\radius}{\gwt{c}}\isomorphic\detGraph}+\smallo{1}$. Plugging this in (\ref{eq:10}) yields
\begin{align*}
\prob{\ballP{\radius}{\planargraph_U}\isomorphic\detGraph}
\leq
\prob{\ballP{\radius}{\gwt{c}}\isomorphic\detGraph}+\smallo{1}+\delta.
\end{align*}
As $\delta>0$ is arbitrary, we obtain $\prob{\ballP{\radius}{\planargraph_U}\isomorphic\detGraph}\leq \prob{\ballP{\radius}{\gwt{c}}\isomorphic\detGraph}+\smallo{1}$. Analogously, we can show $\prob{\ballP{\radius}{\planargraph_U}\isomorphic\detGraph}\geq \prob{\ballP{\radius}{\gwt{c}}\isomorphic\detGraph}+\smallo{1}$. Thus, we have $\distributionalLimit{\planargraph_U}{\gwt{c}}$, as desired.
\end{proof}

\section{Local structure of the complex part}\label{sec:local_complex}
In this section we determine the local structure of the complex part $\complexpart{\planargraph}$ of $\planargraph=\planargraph(n,m)$ as described in \Cref{sub:strategy_complex}. We recall that given a graph $H$, the complex part $\complexpart{H}$ is the union of all components of $H$ with at least two cycles. In \Cref{sub:strategy_complex} we saw that there is a close relation between the complex part and random forests. Therefore, we start with the known result that two randomly chosen vertices in a random tree are \lq far away\rq\ from each other.

\begin{thm}[\cite{distance_random_trees}]\label{thm:limit_tree2}
Let $T=T(n)$ be a random tree and $\root=\root(n), \root'=\root'(n)\ur\vertexSet{T}$ be chosen independently and uniformly at random from $\vertexSet{T}$. Then \whp\ $~\distance{T}{\root}{\root'}=\smallomega{n^{1/3}}$.
\end{thm}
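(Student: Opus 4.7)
The plan is to reduce the statement to an exact counting problem on labelled trees and then apply Cayley's formula together with its rooted--forest generalisation.

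First I would eliminate the trivial coincidence $\root = \root'$, which occurs with probability $1/n = o(1)$. On the complementary event, since $(\root, \root')$ is independent of $T$ and the uniform distribution on labelled trees is exchangeable in the vertex labels, the law of $\distance{T}{\root}{\root'}$ coincides, up to an $o(1)$ error, with the law of $\distance{T}{1}{2}$. It therefore suffices to show that, for every constant $C > 0$,
\begin{align*}
\prob{\distance{T}{1}{2} \leq C n^{1/3}} = o(1).
\end{align*}

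Next I would count, for each $k \geq 1$, the number $N_k$ of labelled trees on $[n]$ in which $\distance{T}{1}{2} = k$. Any such tree is uniquely encoded by (i)~an ordered sequence of $k - 1$ distinct internal path vertices, drawn from $[n] \setminus \{1,2\}$, and (ii)~the labelled rooted forest on $[n]$ obtained by removing the $k$ path edges, whose $k+1$ components are rooted at the $k+1$ vertices of the $1$--$2$ path. There are $(n-2)!/(n-k-1)!$ ordered sequences; and by the generalised Cayley formula, the number of labelled rooted forests on $[n]$ with $j$ components having a prescribed $j$-element root set equals $j \cdot n^{n-j-1}$, which in our setting gives $(k+1)\, n^{n-k-2}$ forests. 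Dividing the product by Cayley's total count $n^{n-2}$ of labelled trees and using the crude estimate $(n-2)!/(n-k-1)! \leq n^{k-1}$ would then yield the uniform bound
\begin{align*}
\prob{\distance{T}{1}{2} = k} \;\leq\; \frac{k+1}{n}.
\end{align*}

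A first-moment summation then finishes the argument: for any constant $C > 0$,
\begin{align*}
\prob{\distance{T}{1}{2} \leq C n^{1/3}} \;\leq\; \sum_{k=1}^{\rounddown{C n^{1/3}}} \frac{k+1}{n} \;=\; \bigo{n^{-1/3}} \;=\; o(1).
\end{align*}
The main obstacle I anticipate is the bijective step: one must check carefully that the decomposition into (path sequence, rooted forest) really is a bijection onto its image, so that the generalised Cayley formula is applied with the correct parameter $j = k+1$ and with no double-counting across different values of $k$. Once this combinatorial core is in place, the remainder is essentially bookkeeping and the one-line first-moment estimate above.
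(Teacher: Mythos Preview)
Your argument is correct. The bijection you worry about is genuine: in a tree the $1$--$2$ path is unique, deleting its $k$ edges yields a forest whose $k+1$ components each contain exactly one path vertex, and conversely any such rooted forest together with an ordered sequence of internal path vertices rebuilds a unique tree with $\distance{T}{1}{2}=k$. Your application of the generalised Cayley formula $\left|\forestClass(n,j)\right|=j\,n^{n-j-1}$ with $j=k+1$ is exactly the identity the paper itself invokes elsewhere (e.g.\ in the proof of \Cref{lem:number_vertices_gwt}), and the first-moment bound $\prob{\distance{T}{1}{2}=k}\leq (k+1)/n$ followed by the summation is clean.

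The paper does not prove this statement at all; it is quoted from \cite{distance_random_trees}, where in fact a stronger concentration result around $n^{1/2}$ is established. Your approach is therefore not a variant of the paper's proof but a self-contained elementary argument tailored to the weaker $\smallomega{n^{1/3}}$ bound actually needed downstream. What you gain is independence from the external reference and a short direct computation; what the cited result buys is the sharper order $n^{1/2}$, which the present paper never uses.
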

In \cite{distance_random_trees} a more general version than \Cref{thm:limit_tree2} is actually shown, namely that $\distance{T}{\root}{\root'}$ is concentrated around $n^{1/2}$. Next, we consider the random forest $\forest(n,\ntrees)$ defined as follows.
\begin{definition}\label{def:random_forest}
	Given $n, \ntrees\in\N$, let $\forestClass(n,\ntrees)$ be the class of all forests on vertex set $[n]$ having $\ntrees$ trees as components such that the vertices $1, \ldots, \ntrees$ lie all in different tree components. We denote by $\forest(n,\ntrees)\ur\forestClass(n,\ntrees)$ a forest chosen uniformly at random from $\forestClass(n,\ntrees)$ and call the vertices $1, \ldots, \ntrees$ the {\em roots} of the tree components of $\forest(n,\ntrees)$.
\end{definition}
In our applications the number of tree components will always be \lq small\rq, in other words $t=\smallo{n}$. Thus, a randomly chosen vertex $r\ur\vertexSet{\forest}$ should lie in a \lq large\rq\ tree component, i.e. $\numberVertices{T'}=\smallomega{1}$, where $T'$ is the tree component of $\forest$ containing $r$. Therefore, we expect that the local structures of $T'$ and $\forest$ should be close to that of a random tree. In the following Lemma we will show that this is indeed true. In view of \Cref{thm:limit_tree2} we also expect that the distance between $r$ and the root of $T'$ should be $\smallomega{\numberVertices{T'}^{1/3}}$, so in particular tending to infinity.
\begin{lem}\label{lem:limit_forest}
Let $t=t(n)=\smallo{n}$ and $\forest=\forest(n,t)\ur\forestClass(n,t)$. Moreover, let $\root=\root(n)\ur\vertexSet{\forest}$ and $\root'=\root'(n)\in[t]$ be the root of the tree component in $\forest$ that contains $\root$. Then we have
\begin{enumerate}[label=\normalfont(\roman*)]
\item\label{lem:limit_forest1}
$\distributionalLimit{\unbiased{\forest}}{\skeletonTree}$;
\item\label{lem:limit_forest2}
\whp\ $~\distance{\forest}{\root}{\root'}=\smallomega{1}$.
\end{enumerate}
\end{lem}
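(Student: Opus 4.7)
My approach is to reduce the lemma to the known statements about random trees (\Cref{thm:limit_tree1} and \Cref{thm:limit_tree2}) by conditioning on the partition of $[n]$ into tree components. The key structural observation is that, conditional on the vertex partition $V_1,\ldots,V_t$ of $[n]$ with $i\in V_i$, the tree components of $\forest$ are independent uniform random labelled trees on their vertex sets; this follows from Cayley's formula, which gives $\prod_i|V_i|^{|V_i|-2}$ forests realising a given partition. Consequently, the tree component $T'$ containing $\root$ is, conditional on its vertex set $V(T')$, a uniform random tree on $V(T')$; moreover $\root$ is uniform on $V(T')$, while $\root'$ is the unique (and fixed) vertex of $V(T')\cap[t]$.

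\textbf{Step 1: the component of $\root$ is large.} I would first show that $k:=|V(T')|$ satisfies $k=\smallomega{1}$ \whp. By symmetry across the $t$ roots, the expected number of vertices lying in components of size at most a fixed constant $K$ equals
\begin{align*}
\sum_{j=1}^{K}j\cdot t\cdot\prob{|C_1|=j}~\leq~Kt=\smallo{n},
\end{align*}
where $C_1$ denotes the component of vertex $1$. Hence $\prob{k\le K}\le Kt/n=\smallo{1}$ for every fixed $K$, so $k\to\infty$ in probability.

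\textbf{Step 2: parts (i) and (ii).} For part (i), note that $\ball{\radius}{\forest}{\root}=\ball{\radius}{T'}{\root}$ since $T'$ is the entire component of $\root$. Conditioning on $V(T')$ with $|V(T')|=k$, the pair $(T',\root)$ is distributed, up to relabelling, as $\unbiased{T(k)}$ for a uniform random tree $T(k)$ on $[k]$. By \Cref{thm:limit_tree1} we have $\distributionalLimit{\unbiased{T(k)}}{\skeletonTree}$ as $k\to\infty$, and combined with Step 1 (which allows us to discard the small contribution from $k\le K_0$ and then let $K_0\to\infty$) this yields $\distributionalLimit{\unbiased{\forest}}{\skeletonTree}$, proving part (i). For part (ii), the same conditioning shows that $\dist_{\forest}(\root,\root')=\dist_{T'}(\root,\root')$ is the distance, in a uniform random labelled tree on $V(T')$, between the \emph{fixed} vertex $\root'$ and an \emph{independent uniform} vertex $\root$. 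By label exchangeability this has the same distribution as the distance between two independent uniform vertices in a uniform random tree on $k$ labelled vertices, which by \Cref{thm:limit_tree2} is $\smallomega{k^{1/3}}=\smallomega{1}$ \whp\ (using Step 1 once more).

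\textbf{Main obstacle.} The bulk of the argument is routine once the conditioning is set up correctly. The only subtle point is in part (ii): $\root'$ is the \emph{specific} labelled vertex of $V(T')\cap[t]$, not a uniformly chosen vertex; however, since $T'$ is conditionally uniform on $V(T')$, label exchangeability of the uniform random labelled tree lets me treat $\root'$ as a uniform vertex for the purpose of distance distributions, which is precisely what unlocks \Cref{thm:limit_tree2}. Extracting $k=\smallomega{1}$ from $t=\smallo{n}$ is handled by the simple first-moment bound above.
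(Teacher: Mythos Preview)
Your proposal is correct and follows essentially the same route as the paper: condition on the size (or vertex set) of the tree component $T'$ containing $\root$, observe that $T'$ is then a uniform random labelled tree, and invoke \Cref{thm:limit_tree1} and \Cref{thm:limit_tree2}. The only cosmetic differences are that the paper uses the deterministic pigeonhole bound ``at most $t\sqrt{h}$ vertices lie in components of size $\le\sqrt{h}$'' in place of your first-moment inequality (your bound $Kt$ is in fact also deterministic, so the expectation is unnecessary), and the paper conditions on $\root\notin[t]$ to force $\root\neq\root'$ before applying \Cref{thm:limit_tree2}, whereas your exchangeability argument handles this implicitly (the event $\root=\root'$ has probability $1/k=\smallo{1}$ anyway).
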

\begin{proof}
Without loss of generality, we can assume $\unbiased{\forest}=\rootedGraph{\forest}{\root}$. Let $T'$ be the tree component of $F$ that contains $r$, $h=h(n):=n/t=\smallomega{1}$, and $T(k)$ be the random tree on $k\in \N$ vertices. Conditioned on the event $\numberVertices{T'}=k$, the random rooted tree $\rootedGraph{T'}{\root}$ is distributed like $\unbiased{T(k)}$. Thus, we obtain for each rooted graph $\detGraph$ and $\radius\in \N$
\begin{align}\label{eq:11}
\prob{\ball{\radius}{T'}{\root}\isomorphic\detGraph}&=
\sum_{k=1}^{n}\prob{\numberVertices{T'}=k}\condprob{\ball{\radius}{T'}{\root}\isomorphic\detGraph}{\numberVertices{T'}=k}\nonumber
\\
&=
\sum_{k=1}^{n}\prob{\numberVertices{T'}=k}\prob{\ballP{\radius}{\unbiased{T(k)}}\isomorphic\detGraph}\nonumber
\\
&=
\sum_{k\leq \sqrt{h}}\prob{\numberVertices{T'}=k}\prob{\ballP{\radius}{\unbiased{T(k)}}\isomorphic\detGraph}+
\sum_{k>\sqrt{h}}\prob{\numberVertices{T'}=k}\prob{\ballP{\radius}{\unbiased{T(k)}}\isomorphic\detGraph}.
\end{align}
We note that $F$ has at least $n-t\sqrt{h}=\left(1-\smallo{1}\right)n$ vertices that lie in a tree component with more than $\sqrt{h}$ vertices. Hence, \whp\ $\numberVertices{T'}>\sqrt{h}$, which implies 
\begin{align}\label{eq:12}
\sum_{k\leq \sqrt{h}}\prob{\numberVertices{T'}=k}\prob{\ballP{\radius}{\unbiased{T(k)}}\isomorphic\detGraph}=\smallo{1}.
\end{align}
Due to \Cref{thm:limit_tree1} we have $\prob{\ballP{\radius}{\unbiased{T(k)}}\isomorphic\detGraph}=\prob{\ballP{\radius}{\skeletonTree}\isomorphic\detGraph}+\smallo{1}$ uniformly over all $k>\sqrt{h}$. Hence, we obtain
\begin{align}\label{eq:13}
\sum_{k>\sqrt{h}}\prob{\numberVertices{T'}=k}\prob{\ballP{\radius}{\unbiased{T(k)}}\isomorphic\detGraph}
&=
\big(\prob{\ballP{\radius}{\skeletonTree}\isomorphic\detGraph}+\smallo{1}\big)\sum_{k>\sqrt{h}}\prob{\numberVertices{T'}=k}\nonumber
\\
&=\big(\prob{\ballP{\radius}{\skeletonTree}\isomorphic\detGraph}+\smallo{1}\big)\prob{\numberVertices{T'}>\sqrt{h}}\nonumber
\\
&=
\prob{\ballP{\radius}{\skeletonTree}\isomorphic\detGraph}+\smallo{1}.
\end{align}
Using (\ref{eq:12}) and (\ref{eq:13}) in equation (\ref{eq:11}) yields $\prob{\ball{\radius}{T'}{\root}\isomorphic\detGraph}=\prob{\ballP{\radius}{\skeletonTree}\isomorphic\detGraph}+\smallo{1}$. This shows statement \ref{lem:limit_forest1}, as the local structure of $\unbiased{\forest}=\rootedGraph{\forest}{\root}$ is that of $\rootedGraph{T'}{\root}$.

Next, we prove statement \ref{lem:limit_forest2} in a similar way. Conditioned on the event $\left\{\numberVertices{T'}=k, \root\notin [\ntrees]\right\}$, the distance $\distance{F}{\root}{\root'}$ between $\root$ and the root $\root'$ of $T'$ is distributed like the distance between two randomly chosen distinct vertices in $T(k)$, which is \whp\ larger than $k^{1/3}$ due to \Cref{thm:limit_tree2}. Hence, we obtain uniformly over all $k>\sqrt{h}$
\begin{align*}
\condprob{\distance{F}{\root}{\root'}\geq h^{1/6}}{\numberVertices{T'}=k, \root\notin [\ntrees]}\geq \condprob{\distance{F}{\root}{\root'}\geq k^{1/3}}{\numberVertices{T'}=k, \root\notin [\ntrees]}=1-\smallo{1}.
\end{align*}
Using that yields
\begin{align*}
\prob{\distance{F}{\root}{\root'}\geq h^{1/6}}
&\geq \prob{\distance{F}{\root}{\root'}\geq h^{1/6}, \root\notin [\ntrees]}
\\
&=\sum_{k=1}^{n}\prob{\numberVertices{T'}=k, \root\notin [\ntrees]}\condprob{\distance{F}{\root}{\root'}\geq h^{1/6}}{\numberVertices{T'}=k, \root\notin [\ntrees]}
\\
&\geq
\sum_{k>\sqrt{h}}\prob{\numberVertices{T'}=k, \root\notin [\ntrees]}\condprob{\distance{F}{\root}{\root'}\geq h^{1/6}}{\numberVertices{T'}=k, \root\notin [\ntrees]}
\\
&=
\left(1-\smallo{1}\right)\prob{\numberVertices{T'}>\sqrt{h}, \root\notin [\ntrees]}
\\
&=1-\smallo{1},
\end{align*}
where we used in the last equality that \whp\ $\numberVertices{T'}>\sqrt{h}$ and $\prob{\root\notin [\ntrees]}=\left(n-t\right)/n=1-\smallo{1}$.
This proves statement \ref{lem:limit_forest2}.
\end{proof}

We recall that given a core $C$ and $q\in\N$, we denote by $Q=Q(C,q)$ a graph chosen uniformly at random from the class of all complex graphs with core $C$ and vertex set $[q]$. Furthermore, we can construct $Q$ by choosing a forest $\forest=\forest(q, \numberVertices{C})$ and replacing each vertex $v$ in $C$ by the tree component of $\forest$ which is rooted at $v$. Assuming $\numberVertices{C}=\smallo{q}$ we can use \Cref{lem:limit_forest}. Hence, a randomly chosen vertex $r\ur\vertexSet{Q}$ will typically lie \lq far away\rq\ from all vertices in $C$. Using that we will show that the local limit of $Q$ coincides with that of $\forest$.
\begin{lem}\label{lem:local_random_complex}
For each $n\in\N$, let $C=C(n)$ be a core, $q=q(n)\in\N$, and $Q=Q(C,q)$ be the random complex part with core $C$ and vertex set $[q]$. If $\numberVertices{C}=\smallo{q}$, then $\distributionalLimit{\unbiased{Q}}{\skeletonTree}$.
\end{lem}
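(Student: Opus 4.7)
The plan is to realise $Q = Q(C,q)$ as a function of the random forest $F = F(q, v(C))$ via the construction recalled in \Cref{sub:strategy_complex}: identify $V(Q) = V(F) = [q]$ in such a way that $[v(C)] = V(C)$, and take $E(Q) = E(F) \cup E(C)$, so that each $v \in V(C)$ simultaneously serves as the root of its tree component $T_v$ of $F$ and as an endpoint of the core edges of $C$. Under this coupling the random rooted graphs $\unbiased{Q}$ and $\unbiased{F}$ share the same root $\root \ur [q]$; I write $T$ for the tree component of $F$ containing $\root$ and $\root_T \in [v(C)]$ for its root.

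The heart of the argument is a simple geometric observation. Every edge of $Q$ that is not an edge of $F$ has both its endpoints in $V(C)$, so any path in $Q$ that leaves the tree component $T_v$ starting from some $x \in T_v$ must first pass through $v$. Consequently, for every fixed $\radius \in \N$,
\begin{align*}
\{\distance{F}{\root}{\root_T} > \radius\} \subseteq \{\ball{\radius}{Q}{\root} = \ball{\radius}{F}{\root}\}.
\end{align*}
Since $v(C) = \smallo{q}$, \Cref{lem:limit_forest}\ref{lem:limit_forest2} gives \whp\ $\distance{F}{\root}{\root_T} = \smallomega{1}$, and in particular the event on the left--hand side holds \whp. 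It follows that for every fixed rooted graph $\detGraph$ and every fixed $\radius \in \N$,
\begin{align*}
\prob{\ballP{\radius}{\unbiased{Q}} \isomorphic \detGraph} = \prob{\ballP{\radius}{\unbiased{F}} \isomorphic \detGraph} + \smallo{1},
\end{align*}
which is precisely $\contiguous{\unbiased{Q}}{\unbiased{F}}$.

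To finish, I would combine this contiguity with \Cref{lem:limit_forest}\ref{lem:limit_forest1}, which provides $\distributionalLimit{\unbiased{F}}{\skeletonTree}$, and then invoke \Cref{lem:contiguous1} to transfer the local limit to $\unbiased{Q}$, yielding $\distributionalLimit{\unbiased{Q}}{\skeletonTree}$. The only step requiring any care is the geometric observation above: one has to check that once $\root$ is at $F$--distance larger than $\radius$ from $\root_T$, no core vertex is reachable from $\root$ in $Q$ within distance $\radius$---any such path would first have to enter $V(C)$ through some tree root, and the closest tree root to $\root$ in $Q$ is $\root_T$, reached at distance $\distance{F}{\root}{\root_T}$---so the core edges contribute nothing inside the $\radius$--ball and the balls in $Q$ and in $F$ indeed coincide.
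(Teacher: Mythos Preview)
Your proof is correct and follows essentially the same route as the paper's: couple $Q$ with the random forest $F=F(q,v(C))$, use \Cref{lem:limit_forest}\ref{lem:limit_forest2} to conclude that \whp\ $\ball{\radius}{Q}{\root}=\ball{\radius}{F}{\root}$, deduce $\contiguous{\unbiased{Q}}{\unbiased{F}}$, and then transfer the limit via \Cref{lem:limit_forest}\ref{lem:limit_forest1} and \Cref{lem:contiguous1}. The only difference is that you spell out the geometric justification for the ball equality (that any $Q$--path leaving $T_{\root_T}$ must pass through $\root_T$, whence $\distance{Q}{\root}{\root_T}=\distance{F}{\root}{\root_T}$) more explicitly than the paper does.
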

\begin{proof}
We assume that $\unbiased{Q}=\rootedGraph{Q}{r}$, i.e. $r\ur\vertexSet{Q}$, and that $Q$ can be constructed by choosing a random forest $\forest=\forest(q, \numberVertices{C})$ and replacing each vertex $v$ in $C$ by the tree component of $\forest$ rooted at $v$. Due to \Cref{lem:limit_forest}\ref{lem:limit_forest2} \whp\ the distance from $r$ to the root of the tree component in $F$ containing $r$ is larger than a fixed constant $\radius\in\N$. Thus, we get \whp\ $\ball{\radius}{Q}{\root}=\ball{\radius}{\forest}{\root}$, which implies $\contiguous{\unbiased{Q}}{\unbiased{\forest}}$. Combining that with \Cref{lem:contiguous1} and \Cref{lem:limit_forest}\ref{lem:limit_forest1} yields the statement.
\end{proof}

In all our applications the core $\core{\planargraph}$ will be \lq small\rq\ compared to the complex part $\complexpart{\planargraph}$, i.e. \whp\ $\numberVertices{\core{\planargraph}}=\smallo{\numberVertices{\complexpart{\planargraph}}}$. In such a case we can apply \Cref{lem:local_random_complex} to deduce the local structure of $\complexpart{\planargraph}$.
\begin{lem}\label{lem:local_complex}
Let $\planargraph=\planargraph(n,m)\ur\planarclass(n,m)$ be the random planar graph and $Q=\complexpart{\planargraph}$ the complex part of $\planargraph$. Assume that $m=m(n)$ is such that \whp\ $\numberVertices{\core{\planargraph}}=\smallo{\numberVertices{\complexpart{\planargraph}}}$. Then $\distributionalLimit{\planargraph_Q}{\skeletonTree}$.
\end{lem}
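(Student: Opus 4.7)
The plan is to mirror the proof of \Cref{lem:local_non_complex} and reduce the claim to \Cref{lem:local_random_complex} via the conditional-random-rooted-graph machinery of \Cref{lem:conditional}, followed by a standard $\delta \to 0$ argument to transfer the conclusion from a high-probability subclass back to $\planargraph$ itself.

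Concretely, fix $\delta > 0$ and let $\cl = \cl(\delta) \subseteq \planarclass(n,m)$ be the subclass of graphs $H$ with $\numberVertices{\core{H}} \leq \phi(n) \cdot \numberVertices{\complexpart{H}}$ for a slowly decaying $\phi = \phi(n) = \smallo{1}$ chosen so that $\prob{\planargraph \in \cl} \geq 1 - \delta$ for all large $n$; the hypothesis guarantees such a $\phi$ exists. Set $A = A(n) \ur \cl(n)$ and define $\func(H) := \left(\core{H}, \numberVertices{\complexpart{H}}\right)$. Any sequence $\seq = (C_n, q_n)_{n \in \N}$ feasible for $(\cl, \func)$ then satisfies $\numberVertices{C_n} = \smallo{q_n}$. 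Conditioning on $\func(A) = (C_n, q_n)$, the complex part of $A$ is, up to relabelling of its vertex set, uniformly distributed among all complex graphs with core $C_n$ on $q_n$ vertices: any such graph arises from its core by attaching rooted trees, and attaching trees to a planar core preserves planarity, so the planarity constraint imposes no restriction on the attachment step. Hence the $\ell$-ball distribution of $\condGraph{A_Q}{\seq}$ coincides with that of $\unbiased{Q(C_n, q_n)}$, and \Cref{lem:local_random_complex} yields $\distributionalLimit{\condGraph{A_Q}{\seq}}{\skeletonTree}$. Invoking \Cref{lem:conditional} upgrades this to $\distributionalLimit{A_Q}{\skeletonTree}$.

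Finally, for any rooted graph $W$ and $\ell \in \N$, splitting on $\{\planargraph \in \cl\}$ and noting that, conditional on $\planargraph \in \cl$, $\planargraph$ is distributed exactly like $A$, gives
\begin{align*}
\prob{\ballP{\ell}{\planargraph_Q} \isomorphic W} \leq \prob{\ballP{\ell}{A_Q} \isomorphic W} + \delta,
\end{align*}
together with the analogous lower bound; since $\distributionalLimit{A_Q}{\skeletonTree}$ and $\delta > 0$ was arbitrary, we conclude $\distributionalLimit{\planargraph_Q}{\skeletonTree}$. The main step needing care is the distributional identity between $\condGraph{A_Q}{\seq}$ and $\unbiased{Q(C_n, q_n)}$: one has to verify that the uniform measure on $\planarclass(n,m)$, conditioned on the core and on the order of the complex part, pushes forward to the uniform measure on complex graphs with the prescribed core on a labelled vertex set of the prescribed size. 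This is the sole place where planarity enters (benignly) rather than being inherited from an earlier lemma, and it is essentially a bookkeeping check that the tree-attachment description of the complex part decouples uniformly from the rest of the model.
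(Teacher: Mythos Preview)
Your proof is correct and follows essentially the same route as the paper: restrict to a high-probability subclass where $\numberVertices{\core{H}}=\smallo{\numberVertices{\complexpart{H}}}$, condition on $\func(H)=(\core{H},\numberVertices{\complexpart{H}})$, identify the conditioned complex part with $\unbiased{Q(C_n,q_n)}$, apply \Cref{lem:local_random_complex}, and then \Cref{lem:conditional}. The only cosmetic difference is the last step: the paper invokes \Cref{lem:contiguous2}\ref{lem:contiguous2A} together with \Cref{lem:contiguous1} to pass from $A_Q$ back to $\planargraph_Q$, whereas you unwrap this into the explicit $\delta$-splitting argument (as in the proof of \Cref{lem:local_non_complex}); these are equivalent, and your added justification for the distributional identity between $\condGraph{A_Q}{\seq}$ and $\unbiased{Q(C_n,q_n)}$ fills in a point the paper leaves implicit.
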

\begin{proof}
We will use \Cref{lem:conditional}. Let $\cl(n)$ be the subclass of $\planarclass(n,m)$ containing those graphs $H$ satisfying $\numberVertices{\core{H}}=\smallo{\numberVertices{\complexpart{H}}}$. By assumption we have \whp\ $\planargraph\in \cl:=\cup_n\cl(n)$. Let $A_Q=A_Q(n)$ be the random rooted graph such that $A=A(n)\ur\cl(n)$ and given the realisation $H$ of $A$, the root is chosen uniformly at random from $\vertexSet{\complexpart{H}}$. We define the function $\func$ such that $\func(H)=\left(\core{H}, \numberVertices{\complexpart{H}}\right)$ for each $H\in\cl$ and let $\seq=\left(C_n, q_n\right)_{n\in\N}$ be a sequence that is feasible for $\left(\cl, \func\right)$. The local structure of $\condGraph{A_Q}{\seq}$ is distributed like that of $\unbiased{Q(C_n, q_n)}$, i.e. for each rooted graph $\detGraph$ and $\radius\in\N$ we have
\begin{align*}
	\prob{\ballP{\radius}{\condGraph{A_Q}{\seq}}\isomorphic \detGraph}=\prob{\ballP{\radius}{\unbiased{Q\left(C_n, q_n\right)}}\isomorphic \detGraph}.
\end{align*}
Hence, we obtain $\distributionalLimit{\condGraph{A_Q}{\seq}}{\skeletonTree}$ by \Cref{lem:local_random_complex}. Together with \Cref{lem:conditional} this implies $\distributionalLimit{A_Q}{\skeletonTree}$. Due to \Cref{lem:contiguous2}\ref{lem:contiguous2A} and the fact that \whp\ $\planargraph\in \cl$ we have $\contiguous{\planargraph_Q}{A_Q}$. Thus, we get $\distributionalLimit{\planargraph_Q}{\skeletonTree}$ by \Cref{lem:contiguous1}, as desired.
\end{proof}

\section{Proofs of main results}\label{sec:proofs}
Throughout this section, let $\planargraph=\planargraph(n,m)\ur \planarclass(n,m)$ be the random planar graph.
\proofof{thm:main1}
Let $G=G(n,m)\ur\erClass(n,m)$ be the \ER\ random graph. Due to \Cref{thm:non_complex} we have
\begin{align*}
	\liminf_{n \to \infty}~ \prob{G \text{ is planar }}>0.
\end{align*}
Thus, each graph property that holds \whp\ in $G$, is also true \whp\ in $P$. In particular, \Cref{lem:local_er} remains true if we replace $G$ by $P$, which proves the statement. \qed

\subsection{Proof of \Cref{thm:main2,thm:main3,thm:main4}}\label{sub:main_proofs}
We prove \Cref{thm:main2,thm:main3,thm:main4} simultaneously. Let $\Largestcomponent=\largestcomponent{\planargraph}$ be the largest component of $P$ and $Q=\complexpart{\planargraph}$ the complex part of $\planargraph$. Using the notation from \Cref{def:R_rooted} we can rewrite statements \ref{thm:main2A} and \ref{thm:main2B} to $\distributionalLimit{\planargraph_{\Largestcomponent}}{\skeletonTree}$ and $\distributionalLimit{\planargraph_{\Rest}}{\gwt{1}}$, respectively. Let $\largestcomponent{Q}$ be the largest component of $Q$ and let $\rest{Q}=Q\setminus\largestcomponent{Q}$. Due to \Cref{thm:internal_structure}\ref{thm:internal_structure6} we have \whp\ $\Largestcomponent=\largestcomponent{Q}$. Hence, \whp\ $\symmetricDifference{\vertexSet{\Largestcomponent}}{\vertexSet{Q}}=\vertexSet{\rest{Q}}$. Combining that with \Cref{thm:internal_structure}\ref{thm:internal_structure7} yields $\left|\symmetricDifference{\vertexSet{\Largestcomponent}}{\vertexSet{Q}}\right|=\smallo{\numberVertices{Q}}$. Thus, we obtain by \Cref{lem:contiguous2}\ref{lem:contiguous2B}
\begin{align}\label{eq:4}
\contiguous{\planargraph_\Largestcomponent}{\planargraph_Q}.
\end{align}
Due to \Cref{thm:internal_structure}\ref{thm:internal_structure2} we have \whp\ $\numberVertices{\core{\planargraph}}=\smallo{\numberVertices{\complexpart{\planargraph}}}$ and therefore we can apply \Cref{lem:local_complex} to get $\distributionalLimit{\planargraph_Q}{\skeletonTree}$. Together with (\ref{eq:4}) and \Cref{lem:contiguous1} this yields $\distributionalLimit{\planargraph_L}{\skeletonTree}$, which shows statement \ref{thm:main2A} of \Cref{thm:main2,thm:main3,thm:main4}.

We prove \ref{thm:main2B} similarly. To that end, let $\Restcomplex=\restcomplex{\planargraph}=\planargraph\setminus Q$ be the non--complex part of $\planargraph$ and let $\Rest=\planargraph\setminus\Largestcomponent$. We have \whp\ $\symmetricDifference{\vertexSet{\Rest}}{\vertexSet{\Restcomplex}}=\vertexSet{\rest{Q}}$, as \whp\ $\vertexSet{\Rest}=\vertexSet{\Restcomplex}\cup\vertexSet{\rest{Q}}$ by \Cref{thm:internal_structure}\ref{thm:internal_structure6}. Together with \Cref{thm:internal_structure}\ref{thm:internal_structure8} this implies $\left|\symmetricDifference{\vertexSet{\Rest}}{\vertexSet{\Restcomplex}}\right|=\smallo{\numberVertices{\Restcomplex}}$. Hence, we have $\contiguous{\planargraph_\Rest}{\planargraph_U}$ by \Cref{lem:contiguous2}\ref{lem:contiguous2B}. Next, we combine \Cref{thm:internal_structure}\ref{thm:internal_structure4}, \ref{thm:internal_structure5} and \Cref{lem:local_non_complex} to get $\distributionalLimit{\planargraph_U}{\gwt{1}}$. Together with \Cref{lem:contiguous1} this implies $\distributionalLimit{\planargraph_\Rest}{\gwt{1}}$, which proves \ref{thm:main2B} of \Cref{thm:main2,thm:main3,thm:main4}.

Due to \Cref{thm:internal_structure}\ref{thm:internal_structure3} we have \whp\ $\numberVertices{\Largestcomponent}=\smallo{n}$, $\numberVertices{\Largestcomponent}=\left(c-1+\smallo{1}\right)n$, and $\numberVertices{\Largestcomponent}=\left(1-\smallo{1}\right)n$ if $m$ is as in \Cref{thm:main2}, \Cref{thm:main3}, and \Cref{thm:main4}, respectively. Thus, \ref{thm:main2C} of \Cref{thm:main2,thm:main3,thm:main4} follows by combining \Cref{lem:linear_combination} with statements \ref{thm:main2A} and \ref{thm:main2B}. \qed

\section{Local structure of the core and kernel}\label{sec:local_core_kernel}
Let $\planargraph=\planargraph(n,m)\ur\planarclass(n,m)$ be the random planar graph and we recall that the core $C=\core{\planargraph}$ of $\planargraph$ is the maximal subgraph of the complex part $\complexpart{\planargraph}$ of minimum degree at least two. Furthermore, we obtain the kernel $K=\kernel{\planargraph}$ of $\planargraph$ by replacing all maximal paths in $\core{\planargraph}$ having only internal vertices of degree two by an edge between the endpoints of the path. In this section we consider the local structure of $\planargraph$ around a root which is chosen uniformly at random from $\vertexSet{C}$ and $\vertexSet{K}$, respectively. More precisely, we determine the local limits of the random rooted graphs $\planargraph_C$ and $\planargraph_K$.

If $m\leq n/2+\bigo{n^{2/3}}$, then $\liminf_{n \to \infty}\prob{\vertexSet{C}=\emptyset}=\liminf_{n \to \infty}\prob{\vertexSet{K}=\emptyset}>0$ by \Cref{thm:non_complex}. Hence, we will restrict our considerations to the cases where $m$ is as in \Cref{thm:main2,thm:main3,thm:main4}. We start with the following definition, which provides a generalised version of the Skeleton tree $\skeletonTree$.
\begin{definition}
For $k\in\N_0:=\N\cup\left\{0\right\}$ we denote by $\infinitepath{k}$ the rooted graph consisting of a root and $k$ infinite paths starting at the root, i.e. $\infinitepath{k}$ is the unique rooted tree such that the root has degree $k$ and all other vertices have degree two. Moreover, the {\em Skeleton tree with $k$ rays}, denoted by $\skeletonTreeRays{k}$, is the random rooted graph which is obtained by replacing each vertex of $\infinitepath{k}$ by an independent \GW\ tree $\gwt{1}$ with offspring distribution $\poisson{1}$.  
\end{definition}
\begin{figure}[t]
	\begin{tikzpicture}[scale=1, line width=0.4mm, every node/.style={circle,fill=gray, inner sep=0, minimum size=0.22cm}]
		\node (0) at (0,0) [draw,rectangle]  {};
	%	\node (l0) at (0,-0.4) [draw=none, fill=none]  {$\root_0$};
		\node (11) at (-0.7,0.7) [draw]  {};
	%	\node (l11) at (-0.7,0.3) [draw=none, fill=none]  {$\root_{1,1}$};
		\node (12) at (-1.4,1.4) [draw]  {};
	%	\node (l12) at (-1.85,1.4) [draw=none, fill=none]  {$\root_{1,2}$};
		\node (13) at (-2.1,2.1) [draw]  {};
	%	\node (l13) at (-1.65,2.1) [draw=none, fill=none]  {$\root_{1,3}$};
		\node (14) at (-2.6,2.6) [draw=none, fill=none]  {};
		\node (21) at (0,1) [draw]  {};
	%	\node (l21) at (-0.35,1.2) [draw=none, fill=none]  {$\root_{2,1}$};
		\node (22) at (0,2) [draw]  {};
	%	\node (l22) at (-0.45,2) [draw=none, fill=none]  {$\root_{2,2}$};
		\node (23) at (0,3) [draw]  {};
	%	\node (l23) at (-0.45,3) [draw=none, fill=none]  {$\root_{2,3}$};
		\node (24) at (0,3.8) [draw=none, fill=none]  {};
		\node (31) at (0.7,0.7) [draw]  {};
	%	\node (l31) at (0.6,1) [draw=none, fill=none]  {$\root_{3,1}$};
		\node (32) at (1.4,1.4) [draw]  {};
	%	\node (l32) at (1.3,1.7) [draw=none, fill=none]  {$\root_{3,2}$};
		\node (33) at (2.1,2.1) [draw]  {};
	%	\node (l33) at (2,2.4) [draw=none, fill=none]  {$\root_{3,3}$};
		\node (34) at (2.6,2.6) [draw=none, fill=none]  {};		
		\draw[-] (0) to (11);
		\draw[-] (11) to (12);
		\draw[-] (12) to (13);
		\draw[->] (13) to (14);
		\draw[-] (0) to (21);
		\draw[-] (21) to (22);
		\draw[-] (22) to (23);
		\draw[->] (23) to (24);
		\draw[-] (0) to (31);
		\draw[-] (31) to (32);
		\draw[-] (32) to (33);
		\draw[->] (33) to (34);
		
		\node (0a) at (0.8,0) [draw,fill=none]  {};
		\draw[-] (0) to (0a);
		
		\node (31a) at (1.3,0.2) [draw,fill=none]  {};
		\node (31b) at (1.5,0.6) [draw,fill=none]  {};
		\node (31c) at (1.4,1) [draw,fill=none]  {};
		\node (31d) at (2.1,1.4) [draw,fill=none]  {};
		\node (31e) at (2.1,0.9) [draw,fill=none]  {};
		\node (31f) at (2.1,0.5) [draw,fill=none]  {};
		\node (31g) at (2.1,0.1) [draw,fill=none]  {};
		\node (31h) at (2.7,1.6) [draw,fill=none]  {};
		\node (31i) at (2.7,1.2) [draw,fill=none]  {};
		\node (31j) at (2.7,0.1) [draw,fill=none]  {};
		\node (31k) at (3.3,1.5) [draw,fill=none]  {};
		\node (31l) at (3.3,1.2) [draw,fill=none]  {};
		\node (31m) at (3.3,0.9) [draw,fill=none]  {};
		\node (31n) at (3.3,0.5) [draw,fill=none]  {};
		\node (31o) at (3.3,0) [draw,fill=none]  {};
		\node (31p) at (3.9,1.5) [draw,fill=none]  {};
		\node (31q) at (3.9,1.2) [draw,fill=none]  {};
		\node (31r) at (3.9,0.7) [draw,fill=none]  {};
		\node (31s) at (3.9,0.3) [draw,fill=none]  {};
		\node (31t) at (4.5,1.2) [draw,fill=none]  {};
		\node (31u) at (4.5,0.3) [draw,fill=none]  {};
		\node (31v) at (5.1,1.4) [draw,fill=none]  {};
		\node (31w) at (5.1,1) [draw,fill=none]  {};
		\node (31x) at (5.7,1) [draw,fill=none]  {};
		\draw[-] (31) to (31a);
		\draw[-] (31) to (31b);
		\draw[-] (31) to (31c);
		\draw[-] (31a) to (31g);
		\draw[-] (31b) to (31f);
		\draw[-] (31c) to (31d);
		\draw[-] (31c) to (31e);
		\draw[-] (31d) to (31h);
		\draw[-] (31d) to (31i);
		\draw[-] (31g) to (31j);
		\draw[-] (31i) to (31k);
		\draw[-] (31i) to (31l);
		\draw[-] (31i) to (31m);
		\draw[-] (31j) to (31n);
		\draw[-] (31j) to (31o);
		\draw[-] (31k) to (31p);
		\draw[-] (31l) to (31q);
		\draw[-] (31n) to (31r);
		\draw[-] (31n) to (31s);
		\draw[-] (31q) to (31t);
		\draw[-] (31s) to (31u);
		\draw[-] (31t) to (31v);
		\draw[-] (31t) to (31w);
		\draw[-] (31w) to (31x);
		
		\node (33a) at (2.7,2.2) [draw,fill=none]  {};
		\node (33b) at (3.3,2.5) [draw,fill=none]  {};
		\node (33c) at (3.3,2) [draw,fill=none]  {};
		\node (33d) at (3.9,2.5) [draw,fill=none]  {};
		\draw[-] (33) to (33a);	
		\draw[-] (33a) to (33b);	
		\draw[-] (33a) to (33c);	
		\draw[-] (33b) to (33d);
		
		\node (22a) at (0.6,1.9) [draw,fill=none]  {};	
		\node (22b) at (0.6,2.3) [draw,fill=none]  {};
		\node (22c) at (1.2,2.3) [draw,fill=none]  {};
		\draw[-] (22) to (22a);	
		\draw[-] (22) to (22b);	
		\draw[-] (22b) to (22c);
		
		\node (11a) at (-1.3,0.3) [draw,fill=none]  {};
		\node (11b) at (-1.3,0.9) [draw,fill=none]  {};	
		\node (11c) at (-1.9,0.1) [draw,fill=none]  {};	
		\node (11d) at (-1.9,0.45) [draw,fill=none]  {};	
		\node (11e) at (-1.9,0.9) [draw,fill=none]  {};	
		\node (11f) at (-2.5,0.1) [draw,fill=none]  {};	
		\node (11g) at (-2.5,0.9) [draw,fill=none]  {};	
		\draw[-] (11) to (11a);
		\draw[-] (11) to (11b);	
		\draw[-] (11a) to (11c);
		\draw[-] (11a) to (11d);	
		\draw[-] (11b) to (11e);
		\draw[-] (11c) to (11f);
		\draw[-] (11e) to (11g);
		
		\node (13a) at (-2.7,2.1) [draw,fill=none]  {};	
		\node (13b) at (-3.3,2.1) [draw,fill=none]  {};	
		\node (13c) at (-3.9,1.8) [draw,fill=none]  {};	
		\node (13d) at (-3.9,2.4) [draw,fill=none]  {};	
		\node (13e) at (-4.5,2.4) [draw,fill=none]  {};	
		\node (13f) at (-4.5,2) [draw,fill=none]  {};	
		\node (13g) at (-4.5,1.6) [draw,fill=none]  {};	
		\node (13h) at (-5.1,2.4) [draw,fill=none]  {};	
		\node (13i) at (-5.1,1.6) [draw,fill=none]  {};	
		\draw[-] (13) to (13a);
		\draw[-] (13a) to (13b);
		\draw[-] (13b) to (13c);
		\draw[-] (13b) to (13d);
		\draw[-] (13d) to (13e);
		\draw[-] (13c) to (13f);
		\draw[-] (13c) to (13g);
		\draw[-] (13e) to (13h);
		\draw[-] (13g) to (13i);
		
		\node (A) at (-8,0) [draw,rectangle]  {};
		\node (B) at (-8,0.8) [draw]  {};
		\node (C) at (-8,1.6) [draw]  {};
		\node (D) at (-8,2.4) [draw]  {};
		\node (E) at (-8.4,0.6) [draw]  {};
		\node (F) at (-8.8,1.2) [draw]  {};
		\node (G) at (-9.2,1.8) [draw]  {};
		\node (H) at (-7.6,0.6) [draw]  {};
		\node (I) at (-7.2,1.2) [draw]  {};
		\node (J) at (-6.8,1.8) [draw]  {};
		\node (K) at (-8,3.05) [draw=none,fill=none]  {};
		\node (L) at (-9.525,2.2875) [draw=none,fill=none]  {};
		\node (M) at (-6.475,2.2875) [draw=none,fill=none]  {};
		
		\draw[-] (A) to (B);
		\draw[-] (B) to (C);
		\draw[-] (C) to (D);
		\draw[-] (A) to (E);
		\draw[-] (E) to (F);
		\draw[-] (F) to (G);
		\draw[-] (A) to (H);
		\draw[-] (H) to (I);
		\draw[-] (I) to (J);
		\draw[->] (D) to (K);
		\draw[->] (G) to (L);
		\draw[->] (J) to (M);					
	\end{tikzpicture}
	\caption{The rooted graph $\infinitepath{3}$ on the left--hand side and the Skeleton tree $\skeletonTreeRays{3}$ with $k=3$ rays on the right--hand side.}
	\label{fig:skeleton_tree_rays}
\end{figure}
We refer to \Cref{fig:skeleton_tree_rays} for an illustration of $\infinitepath{3}$ and the Skeleton tree $\skeletonTreeRays{3}$ with $k=3$ rays. Now we can state the main result of this section.
\begin{thm}\label{thm:local_core}
Let $\planargraph=\planargraph(n,m)\ur\planarclass(n,m)$, $C=\core{\planargraph}$ be the core of $\planargraph$, and $K=\kernel{\planargraph}$ the kernel. Suppose that $m=m(n)$ lies in one of the three regimes where $m=n/2+s$ with $s=s(n)>0$ satisfying $s=o(n)$ and $s^3n^{-2}\to\infty$, $m=\alpha n/2$ with $\alpha=\alpha(n)$ tending to a constant in $(1,2)$, or $m=n+\smallo{n}$ with $m\leq n+\smallo{n\left(\log n\right)^{-2/3}}$. Then $\distributionalLimit{\planargraph_C}{\skeletonTreeRays{2}}$ and $\distributionalLimit{\planargraph_K}{\skeletonTreeRays{3}}$.
\end{thm}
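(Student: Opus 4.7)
The plan is to fit Theorem~\ref{thm:local_core} into the framework used for Theorems~\ref{thm:main2}--\ref{thm:main4} in Section~\ref{sec:proofs}. By \Cref{lem:conditional} together with \Cref{thm:internal_structure}, it suffices to work under the conditioning that the kernel $K$, the core size $v(C) = v_C$, and the complex-part size $v(Q) = q$ are fixed and satisfy the asymptotics $v(K) = o(v_C)$, $e(K) = (3/2 + o(1)) v(K)$ (so in particular $e(K) = o(v_C)$) and $v_C = o(q)$. Under this conditioning, the core $C$ arises from $K$ by distributing $v_C - v(K)$ subdivision vertices uniformly over the $e(K)$ kernel edges, and the complex part $Q = Q(C, q)$ is obtained from $C$ by attaching a uniform random forest $F = F(q, v_C) \in \forestClass(q, v_C)$ at the core vertices (\Cref{def:random_forest}).

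\textbf{Part 1: geometry inside the core.} Because $v(K) = o(v_C)$, a uniform $r \in V(C)$ is \whp\ a degree-$2$ subdivision vertex of $C$. Among the $e(K)$ subdivided kernel edges, only $O(\ell \cdot e(K)) = o(v_C)$ subdivision vertices live in paths of length at most $2\ell + 1$, so a uniform subdivision vertex is \whp\ sitting in a long subdivision at distance $> \ell$ from every kernel vertex; hence $\ball{\ell}{C}{r}$ is \whp\ a path of length $2\ell$ centred at $r$. For a uniform $r' \in V(K)$, the identity $\sum_v \degree{v}{K} = 2 e(K) = (3 + o(1)) v(K)$ combined with $\delta(K) \geq 3$ forces $\degree{r'}{K} = 3$ \whp\ (potential loops and multi-edges at $r'$ are negligible); combined with the same subdivision-length argument, $\ball{\ell}{C}{r'}$ is \whp\ a star of three length-$\ell$ arms meeting at $r'$.

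\textbf{Part 2: trees at the core roots.} The key probabilistic claim is that for every fixed $L$ and every $L$ distinct core vertices $r_1, \dots, r_L$, the random rooted trees $\rootedGraph{T_{r_i}}{r_i}$ attached in $F(q, v_C)$ jointly converge in the local sense to $L$ independent copies of $\gwt{1}$. Using the generalised Cayley formula $|\forestClass(q, v_C)| = v_C \, q^{\,q - v_C - 1}$ together with Stirling, one checks that
\[
\prob{|T_{r_1}| = s_1, \dots, |T_{r_L}| = s_L} \;\longrightarrow\; \prod_{i=1}^{L} \frac{s_i^{\,s_i - 1} e^{-s_i}}{s_i!}
\]
as $v_C \to \infty$ with $v_C/q \to 0$; the limit is a product of $L$ independent Borel distributions, which is exactly the joint law of the total progenies of $L$ independent $\gwt{1}$ trees. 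Conditionally on the sizes, each $T_{r_i}$ is a uniform labeled rooted tree on $s_i$ vertices, which is precisely $\gwt{1}$ conditioned on total progeny $s_i$; integrating against the Borel prior recovers, independently at each $r_i$, an unconditioned $\gwt{1}$ at its root.

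\textbf{Part 3: assembly.} Combining Parts 1 and 2 yields the local structure. In the core case, $\ball{\ell}{\planargraph}{r}$ is \whp\ a path of length $2\ell$ with independent $\gwt{1}$ trees hanging off each of its $2\ell + 1$ vertices (truncated to distance $\ell$ from $r$), which is exactly $\ballP{\ell}{\skeletonTreeRays{2}}$. In the kernel case, $\ball{\ell}{\planargraph}{r'}$ is \whp\ a star of three length-$\ell$ arms with independent $\gwt{1}$ trees attached, i.e.\ $\ballP{\ell}{\skeletonTreeRays{3}}$. As this identification holds for every fixed rooted graph $W$ and every $\ell$, the theorem follows via \Cref{lem:conditional} and \Cref{lem:contiguous2}.

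I expect the principal obstacle to be Part 2, specifically the joint convergence of the tree sizes to $L$ independent Borel variables. The one-root convergence is a clean Cayley--Stirling exercise, but the asymptotic independence across the $L$ marked roots requires exploiting $v_C \to \infty$ so that any $O(1)$ collection of roots is a vanishing fraction of the $v_C$ root slots and therefore cannot interact through the conditioning "total forest size $= q$" (heuristically, the "excess mass" is absorbed by the remaining $v_C - L$ roots). Once this joint Borel limit is in place, the identification "Borel $+$ uniform labeled rooted tree of that size $=$ unconditioned $\gwt{1}$" is immediate from the standard size-biasing description of conditioned Galton--Watson trees, and the rest is book-keeping in the spirit of \Cref{lem:local_random_complex,lem:local_complex}.
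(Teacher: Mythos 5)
Your high-level outline matches the paper's: condition on the kernel $K$, core size, and complex-part size via \Cref{lem:conditional}; show $\distributionalLimit{\unbiased{C}}{\infinitepath{2}}$ and $\distributionalLimit{C_K}{\infinitepath{3}}$ for the random core $C(K,k)$; then attach trees from the random forest $F(q, v_C)$ and show they behave like independent $\gwt{1}$'s. Your Part 2 is essentially the paper's argument (generalised Cayley formula plus Stirling for the joint Borel limit, then the identification of $T(k)$ with $\gwt{1}$ conditioned on size). But there is a genuine gap in Part 1, in the kernel case.

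For the core case, a uniform $r\in V(C)$ lands at distance $>\ell$ from all of $V(K)$ by a direct counting argument: there are at most $v(K)+2\ell\, e(K) = o(v(C))$ vertices of $C$ within distance $\ell$ of a kernel vertex, deterministically, regardless of how the subdivision mass is distributed. This is fine (though your phrasing ``subdivision vertices living in paths of length at most $2\ell+1$'' is not the right characterisation of the bad set). However, for a uniform $r'\in V(K)$, the goal is to show that all three edges of $K$ incident to $r'$ have subdivision number at least $\ell$, and you assert this follows ``combined with the same subdivision-length argument.'' It does not. The counting bound gives no control over how many kernel edges have small subdivision number: a priori it is consistent with $e(K) = o(v(C))$ and $\delta(K)\geq 3$ that, say, half the kernel edges have subdivision number $0$, in which case a constant fraction of kernel vertices would be bad. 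What is needed is a statement about the \emph{distribution} of the subdivision number of a fixed edge in the random core $C(K,k)$: for each fixed $e\in E(K)$, \whp\ $\subdivisionNumber{C}{e}=\smallomega{1}$ when $e(K)=\smallo{k}$. The paper establishes exactly this in \Cref{lem:subdivision_number} by a non-trivial switching (double-counting) argument comparing the classes $\{H : \subdivisionNumber{H}{e}=i\}$ and $\{H : \subdivisionNumber{H}{e}=i+1\}$, and then applies it to the three edges incident to $r_K$ after noting that $r_K$ can be fixed before $C$ is sampled. Without this lemma or a substitute, your Part 1 does not go through for the kernel case. (You flag Part 2's joint independence as the principal obstacle; in fact that part is a clean computation, and the real obstacle is precisely the subdivision-number lemma you omitted.)
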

Before providing a proof of \Cref{thm:local_core}, we relate this result to the local limits of the non--complex part $\Restcomplex=\restcomplex{\planargraph}$ of $\planargraph$ and the complex part $Q=\complexpart{\planargraph}$. In \Cref{sub:main_proofs} we showed $\distributionalLimit{\planargraph_U}{\gwt{1}}$ and  $\distributionalLimit{\planargraph_Q}{\skeletonTree}$. By definition, $\gwt{1}$ is distributed like $\skeletonTreeRays{0}$ and the Skeleton tree $\skeletonTree$ like $\skeletonTreeRays{1}$. Thus, we have the following \lq sequence\rq\ of local limits
\begin{align*}
\distributionalLimit{\planargraph_U}{\skeletonTreeRays{0}};\\
\distributionalLimit{\planargraph_Q}{\skeletonTreeRays{1}};\\
\distributionalLimit{\planargraph_C}{\skeletonTreeRays{2}};\\ \distributionalLimit{\planargraph_K}{\skeletonTreeRays{3}}.
\end{align*}

In the remaining part of this section we prove \Cref{thm:local_core}. We will restrict our considerations to the complex part $Q$, as the local limits of $\planargraph_C$ and $\planargraph_K$ are completely determined by $Q$. We recall that the complex part $Q$ can be obtained by replacing each vertex of the core $C$ by a rooted tree. This motivates to proceed in two proof steps, which mirror the definition of $\skeletonTreeRays{k}$ via $\infinitepath{k}$. More precisely, we will first consider only the core $C$ and will show that $\distributionalLimit{\unbiased{ C}}{\infinitepath{2}}$ and $\distributionalLimit{C_K}{\infinitepath{3}}$ (see \Cref{lem:core_local}). Then in the second step we will prove that the trees attached to the vertices in $C$ to obtain $Q$ behave asymptotically like independent \GW\ trees with common offspring distribution $\poisson{1}$ (see \Cref{lem:limit_trees_forest}). 

Instead of directly analysing the local structure of $C$, we will first determine the local structure of a random core with a fixed kernel and a fixed number of vertices. To that end, we introduce the following definition. 

\begin{definition}\label{def:random_core}
	Let $K$ be a kernel and $k\in\N$. The random core $C(K,k)$ is a graph chosen uniformly at random from the class of all cores having $K$ as its kernel and $\numberVertices{K}+k$ vertices. 
\end{definition}

We note that $C(K,k)$ can be obtained by randomly subdividing the edges of $K$ with $k$ additional vertices. In our applications the number of edges in $K$ will be much smaller than $k$ (see \Cref{thm:internal_structure}\ref{thm:internal_structure1}). Therefore, we expect that an edge $e\in\edgeSet{K}$ will be subdivided by \lq many\rq\ vertices. In the following we make that more precise by introducing the subdivision number of an edge and showing that it is \lq large\rq.

\begin{definition}
Let $C$ be a given core with kernel $K=\kernel{C}$. To obtain $C$ from $K$ we have to subdivide each edge of $K$ with a certain number of vertices. For a fixed edge $e\in\edgeSet{K}$ we call this number of vertices the {\em subdivision number} $\subdivisionNumber{C}{e}$.
\end{definition}

\begin{lem}\label{lem:subdivision_number}
For each $n\in\N$, let $K=K(n)$ be a kernel, $k=k(n)\in\N$, $e=e(n)\in\edgeSet{K}$, and $C=C(n)=C(K,k)$ the random core as defined in \Cref{def:random_core}. If $\numberEdges{K}=\smallo{k}$, then \whp\ $\subdivisionNumber{C}{e}=\smallomega{1}$.
\end{lem}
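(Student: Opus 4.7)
The plan is to show that the marginal distribution of $\subdivisionNumber{C}{e}$ is supported, with probability $1-\smallo{1}$, on values tending to infinity. Set $M := \numberEdges{K}$, so that $\sum_{e'\in\edgeSet{K}}\subdivisionNumber{C}{e'}=k$. A core with kernel $K$ on $\numberVertices{K}+k$ vertices is determined by a subdivision profile $(s_{e'})$ summing to $k$, together with an ordered assignment of the $k$ new labels to the subdivision positions along each edge. When every edge of $K$ is a non-loop, the multinomial $\binom{k}{s_{e_1},\ldots,s_{e_M}}$ of assignments and the $\prod_{e'}s_{e'}!$ orderings combine to give exactly $k!$ cores per profile, so the profile distribution is \emph{uniform} on the compositions of $k$ into $M$ non-negative parts. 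Loops and parallel edges contribute only bounded correction factors (reflection symmetries halve the count of orderings along subdivided loops, and the simplicity of $C$ imposes $s_{e'}\geq 2$ at loops and mild restrictions at parallel-edge bundles); these do not alter the asymptotic analysis below.

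Under the (near-)uniform composition distribution, a stars-and-bars computation gives
\begin{align*}
\prob{\subdivisionNumber{C}{e}=s}~\leq~ C\cdot\frac{\binom{k-s+M-2}{M-2}}{\binom{k+M-1}{M-1}}~=~ C\cdot\frac{M-1}{k+M-1}\prod_{i=0}^{s-1}\frac{k-i}{k+M-2-i}~\leq~ C\cdot\frac{M}{k}
\end{align*}
for some absolute constant $C$, uniformly in $s$, where the last inequality uses that every factor in the product is at most one. Since $M=\smallo{k}$, I would choose $T=T(n):=\rounddown{(k/M)^{1/2}}$, so that $T\to\infty$ and $T\cdot M/k\to 0$. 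Summing then yields
\begin{align*}
\prob{\subdivisionNumber{C}{e}\leq T}~\leq~(T+1)\cdot C\cdot\frac{M}{k}~=~\smallo{1},
\end{align*}
hence \whp\ $\subdivisionNumber{C}{e}>T=\smallomega{1}$, as claimed.

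The main obstacle is the bookkeeping needed to justify the reduction to the uniform composition distribution when $K$ has loops or parallel edges: one must check that the extra constraints ($s_{e'}\geq 2$ at loops, restrictions on parallel-edge bundles) and the reflection symmetry at subdivided loops contribute only bounded multiplicative factors to the number of cores per profile. Since each such factor depends only on whether individual $s_{e'}$ satisfy simple lower bounds but not on their exact values, they affect only the normalization of $\prob{\subdivisionNumber{C}{e}=s}$ (absorbed into the constant $C$ above) and leave the decay in $s$ intact; with this verified, the argument proceeds as above.
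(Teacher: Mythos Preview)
Your approach is genuinely different from the paper's and ultimately reaches the same bound $\prob{\subdivisionNumber{C}{e}=s}=O(M/k)$, but the route is distinct: you compute the profile distribution directly via stars-and-bars, whereas the paper uses a switching argument. In the paper's proof one compares $|\mathcal{C}_i|$ with $|\mathcal{C}_{i+1}|$ by moving a single subdivision vertex from another edge onto $e$; this yields $|\mathcal{C}_i|/|\mathcal{C}_{i+1}|\leq\exp\bigl(O(M/k)\bigr)$ for $i\leq N:=\lfloor k/M\rfloor$, and iterating plus summing gives $|\mathcal{C}_j|=O(|\mathcal{C}|/N)$. The virtue of switching is that it never needs to know how many cores realise a given profile, so loops and parallel edges cause no trouble.

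Your argument is clean and correct when $K$ is simple, but the multigraph case is handled imprecisely. Two points deserve care. First, your claim of ``$k!$ cores per profile'' for loopless $K$ fails already when $K$ has parallel edges: with two parallel edges between $u,v$ and $k=8$, the unordered profile $\{4,4\}$ yields $8!/2$ cores while $\{3,5\}$ yields $8!$. The fix is to work with \emph{edge-labelled subdivisions} (equivalently, ordered profiles): there the count per ordered profile really is $k!/2^{|L|}$, and since every core corresponds to exactly $\prod_B m(B)!\cdot 2^{|L|}$ such subdivisions (a number depending only on $K$), the uniform measure on cores matches the uniform measure on valid ordered profiles. So your stars-and-bars formula is right, but not for the reason you give; the symmetry factors from parallel bundles are not ``bounded'' --- they can be enormous --- but they are \emph{constant across profiles} and hence cancel. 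Second, the validity constraints are not purely lower bounds (at parallel bundles the condition is ``at most one $s_{e'}=0$''), and in any case the lower-bound shifts at loops can total $2|L|$, which need not be $O(1)$. The right way to absorb them is to note that imposing the stronger constraint $s_{e'}\geq 2$ everywhere still lower-bounds the denominator, reducing the problem to compositions of $k':=k-2M$; since $M=\smallo{k}$ one has $k'\sim k$ and the bound $\prob{\subdivisionNumber{C}{e}=s}\leq(M-1)/(k'+M-1)\leq 2M/k$ follows for large $n$.

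With these repairs your proof goes through. The trade-off: your route gives an explicit closed-form probability and is arguably more transparent once the multigraph bookkeeping is done correctly, while the paper's switching argument is shorter and entirely sidesteps that bookkeeping.
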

\begin{proof}
We denote by $\mathcal{C}$ the class of all cores having $K$ as its kernel and $\numberVertices{K}+k$ vertices, i.e. $C(K,k)$ is chosen uniformly at random from $\mathcal{C}$. Furthermore, for $i\in\N_0$ let $\mathcal{C}_i\subseteq\mathcal{C}$ be the subclass containing all cores $H\in \mathcal{C}$ for which $\subdivisionNumber{H}{e}=i$. First we aim to find an upper bound for $\left|\mathcal{C}_i\right|/\left|\mathcal{C}_{i+1}\right|$ using a double counting argument. To that end, let $i\in\N_0$ and $H\in\mathcal{C}_i$ be given. Then we can build a graph $H'\in\mathcal{C}_{i+1}$ via the following construction. We start by picking a vertex $v\in\vertexSet{H}\setminus\vertexSet{K}$ which does not lie on $e$, but on an edge $f\in\edgeSet{K}$ with $\subdivisionNumber{H}{f}\geq 3$. Then we choose an edge $e'\in\edgeSet{H}$ which is a \lq part\rq\ of $e$ and move the vertex $v$ to the edge $e'$ (see also \Cref{fig:double_counting}). We note that we have at least $\left(k-i-2\numberEdges{K}\right)$ different choices for $v$ and precisely $(i+1)$ for $e'$. Considering the reverse operation we obtain that for a fixed graph $H'\in\mathcal{C}_{i+1}$ there are at most $(i+1)\left(\numberEdges{K}+k-i-2\right)$ graphs $H\in\mathcal{C}_i$ which can be transformed to $H'$ via the above operation. Thus, we obtain
\begin{align*}
\frac{\left|\mathcal{C}_i\right|}{\left|\mathcal{C}_{i+1}\right|}\leq \frac{(i+1)\left(\numberEdges{K}+k-i-2\right)}{\left(k-i-2\numberEdges{K}\right)(i+1)}\leq 1+\frac{3\numberEdges{K}}{k-i-2\numberEdges{K}}\leq\exp\left(\frac{3\numberEdges{K}}{k-i-2\numberEdges{K}}\right).
\end{align*}
Combining that with the assumption $\numberEdges{K}=\smallo{k}$ we obtain for $n$ large enough and all $i\leq N=N(n):=\lfloor k/\numberEdges{K}\rfloor$
\begin{align}\label{eq:22}
\frac{\left|\mathcal{C}_i\right|}{\left|\mathcal{C}_{i+1}\right|}\leq\exp\left(\frac{8\numberEdges{K}}{k}\right).
\end{align}
For fixed $j\in\N_0$ inequality (\ref{eq:22}) implies that for all $i\in\N$ with $j\leq i\leq N$ we have
\begin{align}\label{eq:23}
\frac{\left|\mathcal{C}_j\right|}{\left|\mathcal{C}_i\right|}=\prod_{i'=j}^{i-1}\frac{\left|\mathcal{C}_{i'}\right|}{\left|\mathcal{C}_{i'+1}\right|}\leq \exp\left(\frac{8\numberEdges{K}(i-j)}{k}\right)\leq  \exp\left(\frac{8\numberEdges{K}N}{k}\right) \leq e^8,
\end{align}
where we used in the last inequality that $N=\lfloor k/\numberEdges{K}\rfloor$.
Summing up (\ref{eq:23}) for all $i\in\N$ with $j\leq i\leq N$ yields  
\begin{align*}
	(N-j+1)\left|\mathcal{C}_j\right| \leq e^8\sum_{i=j}^{N}\left|\mathcal{C}_i\right|=\bigo{\left|\mathcal{C}\right|},
\end{align*}
as $\sum_{i=j}^{N}\left|\mathcal{C}_i\right|\leq \left|\mathcal{C}\right|$. Combining that with the fact $N-j+1=\smallomega{1}$, we obtain $\prob{\subdivisionNumber{C}{e}=j}=\left|\mathcal{C}_j\right|/\left|\mathcal{C}\right|=\smallo{1}$. Thus, we have $\prob{\subdivisionNumber{C}{e}\leq j}=\smallo{1}$ for each fixed $j\in \N_0$, which shows the statement.
\end{proof}
\begin{figure}[t]
	\begin{tikzpicture}[scale=1, line width=0.4mm, every node/.style={circle, fill=gray, inner sep=0, minimum size=0.22cm}, decoration=brace]
		\clip (-0.3cm,-0.7cm) rectangle (15.5cm,2.2cm);
		\node (1) at (0,0) [draw,rectangle]  {};
		\node (2) at (3,0) [draw,rectangle]  {};
		\node (3) at (4.8,0) [draw,rectangle]  {};
		\node (4) at (1.5,2) [draw,rectangle]  {};
		\node (5) at (1.5,0) [draw]  {};
		\node (6) at (0.5,2/3) [draw]  {};
		\node (7) at (1,4/3) [draw]  {};
		\node (8) at (2.25,1) [draw]  {};
		\node (9) at (1.875,1.5) [draw,fill=none]  {};
		\node (9a) at (1.6,1.5) [draw=none,fill=none]  {$v$};
		\node (10) at (2.625,0.5) [draw]  {};
		\node (11) at (3.9,0) [draw]  {};
		
		\draw[-] (1) to (5);
		\draw[-] (2) to (5);
		\draw[-] (1) to (6);
		\draw[-] (6) to (7);
		\draw[-] (4) to (7);
		\draw[-] (2) to (10);
		\draw[-] (4) to (9);
		\draw[-] (8) to (10);
		\draw[-] (8) to (9);
		\draw[-] (2) to (11);
		\draw[-] (3) to (11);	
		\draw [out=100,in=180,looseness=1](1) to  (4);
		\node (12) at (-0.02,0.7) [draw]  {};
		\node (13) at (0.225,1.325) [draw]  {};
		\node (14) at (0.72,1.82) [draw]  {};
		\draw [out=0,in=90,looseness=50](3) to  (3);
		\node (15) at (5.8,0.11) [draw]  {};	
		\node (16) at (6.5,0.8) [draw]  {};
		\node (17) at (5.8,1.65) [draw]  {};
		\node (18) at (4.9,1) [draw]  {};
		\draw[decorate, yshift=-1.5ex] (3,0) -- node[below=1ex,draw=none,fill=none] {$e$} (0,0);
		\draw[decorate, yshift=1ex]  (1.5,0) -- node[above=0.4ex,draw=none,fill=none] {$e'$}  (3,0);
		\draw[decorate, yshift=1ex, xshift=1ex]  (1.5,2) -- node[right=0.6ex, draw=none,fill=none] {$f$}  (3,0);
		\node (19) at (7.6,0.9) [circle, fill=none]  {\Huge $\rightarrow$};

		\node (1A) at (8.8,0) [draw,rectangle]  {};
		\node (2A) at (11.8,0) [draw,rectangle]  {};
		\node (3A) at (13.6,0) [draw,rectangle]  {};
		\node (4A) at (10.3,2) [draw,rectangle]  {};
		\node (5A) at (10.3,0) [draw]  {};
		\node (6A) at (9.3,2/3) [draw]  {};
		\node (7A) at (9.8,4/3) [draw]  {};
		\node (8A) at (11.05,1) [draw]  {};
		\node (9A) at (11.05,0) [draw,fill=none]  {};
		\node (9aA) at (11.05,-0.3) [draw=none,fill=none]  {$v$};
		\node (10A) at (11.425,0.5) [draw]  {};
		\node (11A) at (12.7,0) [draw]  {};
		
		\draw[-] (1A) to (5A);
		\draw[-] (5A) to (9A);
		\draw[-] (1A) to (6A);
		\draw[-] (6A) to (7A);
		\draw[-] (4A) to (7A);
		\draw[-] (2A) to (10A);
		\draw[-] (4A) to (8A);
		\draw[-] (8A) to (10A);
		\draw[-] (2A) to (9A);
		\draw[-] (2A) to (11A);
		\draw[-] (3A) to (11A);	
		\draw [out=100,in=180,looseness=1](1A) to  (4A);
		\node (12A) at (8.78,0.7) [draw]  {};
		\node (13A) at (9.025,1.325) [draw]  {};
		\node (14A) at (9.52,1.82) [draw]  {};
		\draw [out=0,in=90,looseness=50](3A) to  (3A);
		\node (15A) at (14.6,0.11) [draw]  {};	
		\node (16A) at (15.3,0.8) [draw]  {};
		\node (17A) at (14.6,1.65) [draw]  {};
		\node (18A) at (13.7,1) [draw]  {};	
	\end{tikzpicture}
	\caption{The construction used in the double counting argument in the proof of \Cref{lem:subdivision_number}. The vertices lying in the kernel are marked by a square, the others by a circle.}
	\label{fig:double_counting}
\end{figure}
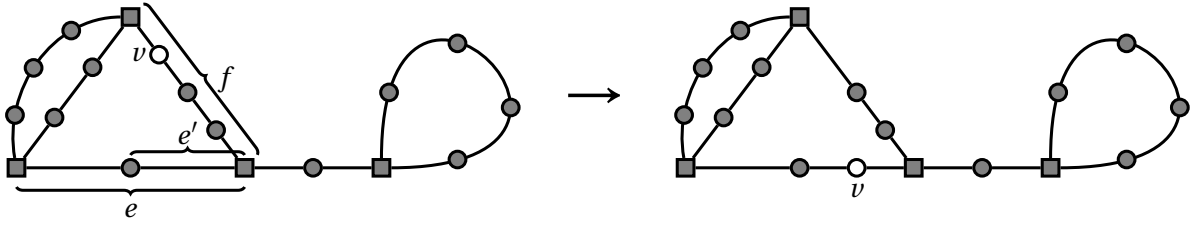
In our applications a vertex $v\in\vertexSet{K}$ will typically have degree three in $K$ (see \Cref{thm:internal_structure}\ref{thm:internal_structure9}). Then in the process of obtaining $C(K,k)$ the three edges of $K$ incident to $v$ will be subdivided by \lq many\rq\ vertices due to \Cref{lem:subdivision_number}. Hence, the local limit of $C(K,k)$ around $v$ should be $\infinitepath{3}$. Moreover, $K$ will typically be quite \lq small\rq\ compared to $k$ (see \Cref{thm:internal_structure}\ref{thm:internal_structure1}). Thus, \lq most\rq\ of the vertices in $C(K,k)$ have degree two, which will imply that the local limit of $C(K,k)$ around a randomly chosen vertex is $\infinitepath{2}$.
\begin{lem}\label{lem:random_core_local}
	For each $n\in\N$, let $K=K(n)$ be a kernel, $k=k(n)\in\N$, and $C=C(K,k)$ the random core as defined in \Cref{def:random_core}. If $\numberEdges{K}=\smallo{k}$ and $\numberEdges{K}=\left(3/2+\smallo{1}\right)\numberVertices{K}$, then $\distributionalLimit{\unbiased{C}}{\infinitepath{2}}$ and $\distributionalLimit{C_K}{\infinitepath{3}}$.
\end{lem}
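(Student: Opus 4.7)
The plan is to handle both convergences by the same scheme: classify each vertex as \emph{good} or \emph{bad} according to whether its ball of radius $\ell$ already has the target shape, and then show that bad vertices form a vanishing fraction of the root set. Since $\infinitepath{2}$ and $\infinitepath{3}$ are deterministic rooted graphs, this probability of agreement is all that local weak convergence demands.

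For $\distributionalLimit{\unbiased{C}}{\infinitepath{2}}$, I will first note that the two hypotheses imply $\numberVertices{K}=(2/3+\smallo{1})\numberEdges{K}=\smallo{k}$, so $|V(C)|=\numberVertices{K}+k=(1+\smallo{1})k$. For a fixed $\ell$, the ball $\ball{\ell}{C}{v}$ is isomorphic to $\ball{\ell}{\infinitepath{2}}{\root}$ (a path of length $2\ell$ centred at $v$) precisely when $v$ is a subdivision vertex on some edge $e\in E(K)$ lying at distance at least $\ell+1$ from both endpoints of the subdivided arc of $e$. All remaining vertices are bad: the $\numberVertices{K}$ vertices of $K$, plus at most $2\ell$ subdivision vertices per edge (those within distance $\ell$ of one endpoint). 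This gives the deterministic bound
\[
\#\{\text{bad vertices}\} \;\leq\; \numberVertices{K} + 2\ell\,\numberEdges{K} \;=\; \smallo{k},
\]
so a uniform $r\ur V(C)$ is good with probability $1-\smallo{1}$.

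For $\distributionalLimit{C_K}{\infinitepath{3}}$ I will call $v\in V(K)$ good (at radius $\ell$) if $v$ has degree exactly $3$ in $K$ and every edge of $K$ incident to $v$ has subdivision number at least $2\ell$. For such $v$ the three length-$\ell$ subdivided arms emanating from $v$ in $C$ are vertex-disjoint, so $\ball{\ell}{C}{v}\isomorphic\ball{\ell}{\infinitepath{3}}{\root}$; the threshold $2\ell$ rather than $\ell$ is chosen so that even if an incident edge is a loop at $v$, its two sides cannot meet within distance $\ell$. Bounding the bad vertices splits into two parts. First, the degree-sum formula together with $\numberEdges{K}=(3/2+\smallo{1})\numberVertices{K}$ and minimum degree $\geq 3$ in $K$ gives at most $2\numberEdges{K}-3\numberVertices{K}=\smallo{\numberVertices{K}}$ vertices of degree $>3$. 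Second, \Cref{lem:subdivision_number} yields $\prob{S_e<2\ell}=\smallo{1}$ for each $e\in E(K)$, and by the exchangeability of the subdivision profile $(S_e)_{e\in E(K)}$ (the random core $C(K,k)$ corresponds to a uniform weak composition of $k$ into $\numberEdges{K}$ parts) this bound is uniform in $e$. Linearity then gives $\expec{|\{e:S_e<2\ell\}|}=\smallo{\numberEdges{K}}$, and by Markov $|\{e:S_e<2\ell\}|=\smallo{\numberEdges{K}}$ whp. Since each such edge contributes at most two endpoints, at most $\smallo{\numberEdges{K}}=\smallo{\numberVertices{K}}$ vertices of $K$ lie on a low-subdivision edge. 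Altogether the number of bad vertices is $\smallo{\numberVertices{K}}$, so $r\ur V(K)$ is good with probability $1-\smallo{1}$.

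The main obstacle will be the passage from the single-edge statement of \Cref{lem:subdivision_number} to a bound on the expected number of edges with small subdivision number; this is resolved cleanly by exchangeability of the weak-composition distribution of $(S_e)_{e\in E(K)}$, which forces the $\smallo{1}$ bound on $\prob{S_e<2\ell}$ to be uniform in $e$. A subsidiary subtlety is the use of the threshold $2\ell$ instead of $\ell$, dictated by the possible presence of loops at the sampled kernel vertex; everything else reduces to the degree-sum count and the deterministic upper bound $\numberVertices{K}+O(\numberEdges{K})=\smallo{k}$.
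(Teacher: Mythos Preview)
Your argument for $\distributionalLimit{\unbiased{C}}{\infinitepath{2}}$ is essentially the paper's: bound the vertices whose $\ell$-ball meets $V(K)$ by $\numberVertices{K}+2\ell\,\numberEdges{K}=\smallo{k}$.

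For $\distributionalLimit{C_K}{\infinitepath{3}}$ your route diverges. Since $K$ is deterministic, the paper samples $r_K\ur V(K)$ \emph{first}, notes that \whp\ $\deg_K(r_K)=3$ by the degree-sum argument, and then applies \Cref{lem:subdivision_number} directly to each of the (at most three) edges incident to the now-fixed $r_K$; a union bound over these three edges already gives that \whp\ all of them have subdivision number exceeding any fixed threshold. You instead establish the global statement that \whp\ only $\smallo{\numberEdges{K}}$ edges have small subdivision number (via exchangeability, linearity of expectation, and Markov), and convert this into a bound on bad kernel vertices. Both are correct; the paper's argument is shorter because it never needs the global edge count, while yours yields the stronger quenched statement that for typical $C$ almost all $v\in V(K)$ are good simultaneously.

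One small correction: your threshold $S_e\ge 2\ell$ is off by one for loops. If $e$ is a loop at $v$ with $S_e=2\ell$, the subdivided cycle has $2\ell+1$ vertices, all of which lie at distance $\le\ell$ from $v$, so $\ball{\ell}{C}{v}$ contains the full cycle rather than two disjoint arms. Requiring $S_e\ge 2\ell+1$ (or simply $S_e>2\ell$) fixes this and changes nothing else in your argument.
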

\begin{proof}
We assume that $\unbiased{C}=\rootedGraph{C}{r_C}$, i.e. $r_C\ur\vertexSet{C}$, and let $\radius\in\N$. We note that the core $C$ arises from the kernel $K$ by subdividing edges. Thus, there are at most $2\radius\numberEdges{K}$ vertices $v\in\vertexSet{C}\setminus\vertexSet{K}$ such that $\vertexSet{\ball{\radius}{C}{v}}\cap\vertexSet{K} \neq\emptyset$. Hence,
\begin{align*}
	\prob{\vertexSet{\ball{\radius}{C}{r_C}}\cap\vertexSet{K}\neq \emptyset}\leq\frac{\numberVertices{K}+2\radius\numberEdges{K}}{\numberVertices{C}}=\smallo{1},
\end{align*}
as $\numberEdges{K}=\smallo{\numberVertices{C}}$ and $\numberEdges{K}=\left(3/2+\smallo{1}\right)\numberVertices{K}$.
This shows $\distributionalLimit{\unbiased{C}}{\infinitepath{2}}$.

Next, we suppose that $C_K=\rootedGraph{C}{r_K}$, i.e. $r_K\ur\vertexSet{K}$. As each realisation of $C$ has $K$ as its kernel, we can assume that we first pick $r_K$ and then independently choose the realisation of $C$. We note that \whp\ the degree of $r_K$ in $K$ is three, since $\numberEdges{K}=\left(3/2+\smallo{1}\right)\numberVertices{K}$ and the minimum degree of $K$ is at least three. Furthermore, for each edge $e\in \edgeSet{K}$ we have \whp\ $\subdivisionNumber{C}{e}=\smallomega{1}$ by \Cref{lem:subdivision_number}. In particular, this is true for all edges that are incident to $r_K$. Hence, we obtain $\distributionalLimit{C_K}{\infinitepath{3}}$, which finishes the proof.
\end{proof}

Next, we will transfer the results of \Cref{lem:random_core_local} to the core $\core{\planargraph}$ of the random planar graph $\planargraph$ by using the concept of conditional random rooted graphs and \Cref{lem:conditional}.
\begin{lem}\label{lem:core_local}
	Let $\planargraph=\planargraph(n,m)\ur\planarclass(n,m)$, $C=\core{\planargraph}$ be the core of $\planargraph$, and $K=\kernel{\planargraph}$ the kernel. Suppose that $m=m(n)$ lies in one of the three regimes of \Cref{thm:main2,thm:main3,thm:main4}. Then $\distributionalLimit{\unbiased{C}}{\infinitepath{2}}$ and $\distributionalLimit{C_K}{\infinitepath{3}}$.
\end{lem}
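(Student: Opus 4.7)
The plan is to mirror the proof of \Cref{lem:local_complex}, combining the conditional-random-rooted-graph machinery of \Cref{lem:conditional} with the local limits of $C(K,k)$ established in \Cref{lem:random_core_local}. I will treat both statements in parallel, since they differ only in the choice of root set.

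First I would let $\cl(n) \subseteq \planarclass(n,m)$ consist of those graphs $H$ simultaneously satisfying
\begin{align*}
\numberEdges{\kernel{H}} &= \smallo{\numberVertices{\core{H}}},\\
\numberEdges{\kernel{H}} &= (3/2 + \smallo{1})\numberVertices{\kernel{H}},
\end{align*}
so that by \Cref{thm:internal_structure}\ref{thm:internal_structure1}\ref{thm:internal_structure9} we have $\planargraph \in \cl := \bigcup_n \cl(n)$ \whp. Introduce the random rooted graphs $A_C$ and $A_K$ where $A = A(n) \ur \cl(n)$ and the root is drawn uniformly from $\vertexSet{\core{A}}$ and $\vertexSet{\kernel{A}}$, respectively, and define the function $\func : \cl \to \imageSet$ by $\func(H) := (\kernel{H}, \numberVertices{\core{H}})$.

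The key observation I need is that for every sequence $\seq = (K_n, \nu_n)_{n \in \N}$ feasible for $(\cl, \func)$, the random rooted graphs $\condGraph{A_C}{\seq}$ and $\condGraph{A_K}{\seq}$ have the same local distribution as $\unbiased{C(K_n, k_n)}$ and $C(K_n, k_n)_{K_n}$ respectively, where $k_n := \nu_n - \numberVertices{K_n}$. This holds because, once we fix the kernel and the number of core vertices, the number of labelled planar completions of a given core (attaching labelled rooted trees to form a complex part and then building a non-complex part on the remaining vertex labels) depends only on $\numberVertices{\core{H}}$ and on the available label counts, not on the internal structure of the core; hence the core is uniformly distributed among cores with kernel $K_n$ and $\nu_n$ vertices.

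Given this, \Cref{lem:random_core_local} yields $\distributionalLimit{\condGraph{A_C}{\seq}}{\infinitepath{2}}$ and $\distributionalLimit{\condGraph{A_K}{\seq}}{\infinitepath{3}}$ for every feasible $\seq$, since the hypotheses of that lemma hold along $\seq$ by construction of $\cl$. Applying \Cref{lem:conditional} upgrades these to $\distributionalLimit{A_C}{\infinitepath{2}}$ and $\distributionalLimit{A_K}{\infinitepath{3}}$. Finally, because \whp\ $\planargraph \in \cl$, \Cref{lem:contiguous2}\ref{lem:contiguous2A} gives $\contiguous{\planargraph_C}{A_C}$ and $\contiguous{\planargraph_K}{A_K}$, and \Cref{lem:contiguous1} delivers both claimed local limits. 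The main point requiring care is the bookkeeping behind the uniformity claim in the previous paragraph; the rest is a mechanical assembly of results already in place.
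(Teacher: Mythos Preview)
Your overall architecture matches the paper's proof almost exactly: restrict to the ``good'' subclass $\cl$ using \Cref{thm:internal_structure}\ref{thm:internal_structure1}\ref{thm:internal_structure9}, condition on the pair $(\kernel{H},\numberVertices{\core{H}})$, invoke \Cref{lem:random_core_local} along every feasible sequence, and finish with \Cref{lem:conditional} plus \Cref{lem:contiguous2}\ref{lem:contiguous2A} and \Cref{lem:contiguous1}. Your extra sentence justifying why the conditional core is uniformly distributed (the number of planar completions of a given core depends only on $\numberVertices{\core{H}}$) is a nice point the paper leaves implicit.

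There is, however, a genuine slip in the choice of ambient graph. As you have defined them, $A_C$ and $A_K$ are the \emph{full planar graph} $A$ rooted at a core (respectively kernel) vertex, in line with \Cref{def:R_rooted}. The ball around such a root in $A$ contains the trees hanging off the core, so $\condGraph{A_C}{\seq}$ does \emph{not} have the local distribution of $\unbiased{C(K_n,k_n)}$; that claim is simply false. Moreover, your final contiguity $\contiguous{\planargraph_C}{A_C}$ concerns $\planargraph_C$, which is again $\planargraph$ rooted at a core vertex, whereas the lemma is about $\unbiased{C}=\unbiased{\core{\planargraph}}$, the \emph{core itself} as the ambient graph. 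As written, your argument would head towards \Cref{thm:local_core} (limits $\skeletonTreeRays{2}$ and $\skeletonTreeRays{3}$), not the present lemma (limits $\infinitepath{2}$ and $\infinitepath{3}$).

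The fix is minimal: work with the random rooted graphs $\unbiased{\core{A}}$ and $\core{A}_K$ (the core of $A$, rooted in $\vertexSet{\core{A}}$ and $\vertexSet{\kernel{A}}$ respectively), exactly as the paper does. Then the conditional identification with $\unbiased{C(K_n,k_n)}$ and $C(K_n,k_n)_{K_n}$ is correct, and the final contiguity step reads $\contiguous{\unbiased{C}}{\unbiased{\core{A}}}$ and $\contiguous{C_K}{\core{A}_K}$, which is what the lemma needs.
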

\begin{proof}
We will use \Cref{lem:conditional}. To that end, let $\cl(n)$ be the subclass of $\planarclass(n,m)$ containing those graphs $H$ satisfying $\numberEdges{\kernel{H}}=\smallo{\numberVertices{\core{H}}}$ and $\numberEdges{\kernel{H}}=\left(3/2+\smallo{1}\right)\numberVertices{\kernel{H}}$. By \Cref{thm:internal_structure}\ref{thm:internal_structure1} and \ref{thm:internal_structure9} we have \whp\ $\planargraph\in \cl:=\cup_n\cl(n)$. Let $A=A(n)\ur\cl(n)$ and we define the function $\func$ such that $\func(H)=\big(\kernel{H}, \numberVertices{\core{H}}-\numberVertices{\kernel{H}}\big)$ for each $H\in\cl$. Let $\seq=\left(K_n, k_n\right)_{n\in\N}$ be a sequence that is feasible for $\left(\cl, \func\right)$. Moreover, let $\core{A}_C$ and $\core{A}_K$ be the random rooted graphs where we first pick the realisation of $\core{A}$ and then independently choose the root uniformly at random from $\vertexSet{\core{A}}$ and $\vertexSet{\kernel{A}}$, respectively. The local structures of $\condGraph{\core{A}_C}{\seq}$ and $\condGraph{\core{A}_K}{\seq}$ are distributed like that of $\unbiased{C(K_n, k_n)}$ and $C(K_n, k_n)_K$, respectively. Thus, we obtain by \Cref{lem:random_core_local,lem:conditional} that $\distributionalLimit{\unbiased{\core{A}}}{\infinitepath{2}}$ and $\distributionalLimit{\core{A}_K}{\infinitepath{3}}$. Using \Cref{lem:contiguous2}\ref{lem:contiguous2A} and the fact that \whp\ $\planargraph\in \cl$ we obtain $\contiguous{\unbiased{C}}{\unbiased{\core{A}}}$ and $\contiguous{C_K}{\core{A}_K}$. Thus, the statement follows by \Cref{lem:contiguous1}.
\end{proof}

In the second main part of the proof of \Cref{thm:local_core} we consider the trees which we attach to the vertices of the core $\core{\planargraph}$ to obtain the complex part $\complexpart{\planargraph}$. More precisely, we analyse the components of the random forest $\forest(n,\ntrees)$, which is a graph chosen uniformly at random from the class $\forestClass(n,\ntrees)$ of all forests on vertex set $[n]$ having $\ntrees$ trees as components such that the vertices $1, \ldots, \ntrees$ lie all in different tree components (cf. \Cref{def:random_forest}). We will show that if we consider only a fixed number of tree components of $\forest(n,\ntrees)$, then these trees behave asymptotically like independent \GW\ trees with common offspring distribution $\poisson{1}$ (see \Cref{lem:limit_trees_forest}). To that end, we start by proving that these trees have the \lq right\rq\ number of vertices, which is given by \Cref{thm:total_progeny}.
\begin{lem}\label{lem:number_vertices_gwt}
Let $\ntrees=\ntrees(n)=\smallo{n}$ be such that $t=\smallomega{1}$, $N\in\N$, $k_1, \ldots, k_N\in\N$, and $\forest=\forest(n,\ntrees)\ur\forestClass(n,\ntrees)$. For $i\in[\ntrees]$ we denote by $F_i$ the tree component of $\forest$ containing vertex $i$. Then we have
\begin{align*}
\prob{\forall i\in[N]:\numberVertices{F_i}=k_i }=\left(1+\smallo{1}\right)\prod_{i=1}^{N}e^{-k_i}k_i^{k_i-1}/k_i!.
\end{align*}
\end{lem}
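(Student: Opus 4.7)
The plan is to compute $\prob{\forall i \in [N]:\numberVertices{F_i}=k_i}$ explicitly by enumerating feasible forests via the generalised Cayley formula, then simplify asymptotically. Recall that the number of rooted forests on $[n]$ with $t$ trees whose roots are the specified vertices $1,\dots,t$ equals $t\,n^{n-t-1}$, so $|\forestClass(n,t)|=t\,n^{n-t-1}$. To count forests in which $\numberVertices{F_i}=k_i$ for every $i\in[N]$, I would decompose the construction into three independent steps: (a) choose, for each $i\in[N]$, the $k_i-1$ non-root vertices of $F_i$ from $[n]\setminus[t]$; (b) for each $i\in[N]$, choose a tree on these $k_i$ labelled vertices rooted at $i$, of which Cayley's formula gives $k_i^{k_i-2}$; (c) choose an arbitrary rooted forest on the remaining $n-K$ vertices (with $K:=\sum_{i=1}^N k_i$) with the specified roots $N+1,\dots,t$, contributing $(t-N)(n-K)^{n-K-t+N-1}$ possibilities.

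Multiplying these factors and dividing by $|\forestClass(n,t)|$, the desired probability equals
\begin{align*}
\frac{(n-t)!}{(n-t-K+N)!\,\prod_{i=1}^N(k_i-1)!}\cdot\prod_{i=1}^{N}k_i^{k_i-2}\cdot\frac{(t-N)(n-K)^{n-K-t+N-1}}{t\,n^{n-t-1}}.
\end{align*}
The key algebraic identity is that the purely combinatorial part rearranges as
\begin{align*}
\prod_{i=1}^{N}\frac{k_i^{k_i-2}}{(k_i-1)!}=\prod_{i=1}^{N}\frac{k_i^{k_i-1}}{k_i!},
\end{align*}
which is exactly the target expression without the $e^{-k_i}$ factors. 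So it remains to show that the analytic residual
\begin{align*}
R(n):=\frac{(n-t)!}{(n-t-K+N)!}\cdot\frac{(t-N)(n-K)^{n-K-t+N-1}}{t\,n^{n-t-1}}
\end{align*}
satisfies $R(n)=(1+\smallo{1})e^{-K}$.

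For the asymptotic evaluation, I would use that $K$ and $N$ are fixed while $t=\smallomega{1}$ and $t=\smallo{n}$. The falling factorial $(n-t)!/(n-t-K+N)!$ is a product of $K-N$ terms each of the form $n-t-\bigo{1}$, hence equals $n^{K-N}(1+\smallo{1})$. The ratio $(t-N)/t$ tends to $1$ since $t\to\infty$. For the main contribution, write
\begin{align*}
\frac{(n-K)^{n-K-t+N-1}}{n^{n-t-1}}=\left(1-\frac{K}{n}\right)^{n-t-1}(n-K)^{N-K},
\end{align*}
where $(1-K/n)^{n-t-1}\to e^{-K}$ because $t=\smallo{n}$ and $K$ is fixed, and $(n-K)^{N-K}=n^{N-K}(1+\smallo{1})$. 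The powers of $n$ cancel: $n^{K-N}\cdot n^{N-K}=1$, leaving $R(n)=(1+\smallo{1})e^{-K}$. Combining this with the combinatorial identity above yields the claim.

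The computation is essentially routine bookkeeping; the only mildly delicate point is handling the $(1-K/n)^{n-t-1}\to e^{-K}$ limit with the correct uniformity in $t$, which is why the assumption $t=\smallo{n}$ (not merely $t\leq n$) is needed. The assumption $t=\smallomega{1}$ is used solely to guarantee $(t-N)/t\to 1$ for arbitrary fixed $N$.
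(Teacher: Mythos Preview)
Your proposal is correct and follows essentially the same route as the paper: both count the admissible forests by choosing the $k_i-1$ non-root vertices for each $F_i$ from $[n]\setminus[t]$, placing a Cayley tree on each set, and filling in the remaining $(t-N)$-rooted forest via the generalised Cayley formula, then simplifying asymptotically. Your asymptotic bookkeeping (isolating the falling factorial, the $(t-N)/t$ factor, and the $(1-K/n)^{n-t-1}\to e^{-K}$ contribution) is spelled out a bit more explicitly than in the paper, but the argument is the same.
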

\begin{proof}
Every realisation of $\forest$ such that $\numberVertices{F_i}=k_i$ for each $i\in[N]$ can be constructed as follows. First we choose pairwise disjoint subsets $S_1, \ldots, S_N\subseteq [n]\setminus[t]$ such that $\left|S_i\right|=k_i-1$ for all $i\in[N]$. Then we pick a tree on vertex set $S_i\cup \left\{i\right\}$ for each $i\in[N]$. Finally, we add a forest on vertex set $[n]\setminus\left(\bigcup_{i\in[N]}S_i\cup\left\{i\right\}\right)$ with $(\ntrees-N)$ tree components such that the vertices $N+1, \ldots, t$ lie all in different tree components. Thus, we obtain with $K_i:=\sum_{j=1}^{i}k_j$
\begin{align*}
\prob{\forall i\in[N]:\numberVertices{F_i}=k_i}&=\left(\prod_{i=1}^{N}\binom{n-\ntrees-K_{i-1}}{k_i-1}k_i^{k_i-2}\right)\cdot (\ntrees-N)\left(n-K_N\right)^{n-K_N-\ntrees+N-1}/\left(\ntrees n^{n-t-1}\right)
\\
&=\left(1+\smallo{1}\right)\left(\prod_{i=1}^{N}n^{k_i-1}/\left(k_i-1\right)!\cdot  k_i^{k_i-2}\right)n^{n-K_N-\ntrees+N-1}e^{-K_N}/n^{n-\ntrees-1}
\\
&=\left(1+\smallo{1}\right)\prod_{i=1}^{N}e^{-k_i}k_i^{k_i-1}/k_i!,
\end{align*}
as desired.
\end{proof}
In the proof of \Cref{lem:limit_trees_forest} we will use the following two known facts.
\begin{thm}[{\cite[Theorem 3.16]{vdH}}]\label{thm:total_progeny}
Let $\gwt{1}$ be a \GW\ tree with offspring distribution $\poisson{1}$. Then for each $k\in\N$
\begin{align*}
	\prob{\numberVertices{\gwt{1}}=k}=e^{-k}k^{k-1}/k!.
\end{align*}
\end{thm}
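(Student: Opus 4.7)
The plan is to use the Otter--Dwass hitting-time formula together with the fact that sums of i.i.d.\ Poisson variables are again Poisson. Let $\xi_1, \xi_2, \ldots$ be i.i.d.\ $\poisson{1}$ random variables and set $S_k := \sum_{i=1}^{k}\xi_i$. The first (and main) step would be to establish the identity
\begin{align*}
\prob{\numberVertices{\gwt{1}} = k} = \frac{1}{k}\,\prob{S_k = k-1}.
\end{align*}

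I would obtain this by exploring $\gwt{1}$ in depth-first order and letting $\xi_i$ be the number of children of the $i$-th vertex visited: the number of \emph{active} (discovered but unexplored) vertices after step $j$ equals $S_j - j + 1$, so the tree has exactly $k$ vertices if and only if $S_j \geq j$ for all $j < k$ and $S_k = k-1$. The cycle lemma then says that among the $k$ cyclic rotations of any sequence $(x_1,\ldots,x_k)$ of nonnegative integers with $\sum_i x_i = k-1$, exactly one satisfies this nonnegativity-with-first-passage condition; summing over all such sequences and dividing by $k$ yields the displayed identity.

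The second step is a routine computation: since the sum of $k$ i.i.d.\ $\poisson{1}$ variables is $\poisson{k}$,
\begin{align*}
\prob{S_k = k-1} = e^{-k}\frac{k^{k-1}}{(k-1)!},
\end{align*}
and dividing by $k$ gives $\prob{\numberVertices{\gwt{1}} = k} = e^{-k}k^{k-1}/k!$, as required.

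The main obstacle is establishing the Otter--Dwass identity; the cycle lemma step is classical but requires care in translating between the walk picture and the genealogical picture. Two alternative routes are available if one prefers to avoid the exploration argument: Lagrange inversion applied to the functional equation $\phi(s) = s\,e^{\phi(s)-1}$ satisfied by the probability generating function of the total progeny; or a direct enumerative proof using Cayley's formula, exploiting the fact that the number of rooted labelled trees on $[k]$ is $k^{k-1}$ and that each such tree $T$ is realised by $\gwt{1}$ (up to relabelling children) with probability $e^{-k}\prod_{v} 1/\mathrm{ch}(v)!$, where $\mathrm{ch}(v)$ is the number of children of $v$ in $T$; summing these weights over all rooted labelled trees on $[k]$ recovers the claimed formula.
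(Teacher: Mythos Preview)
The paper does not prove this theorem; it is quoted verbatim from \cite[Theorem 3.16]{vdH} and used as a black box in the proof of \Cref{lem:limit_trees_forest}. Your proposal therefore goes beyond what the paper does.

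That said, your argument is correct and is essentially the standard proof one finds in the cited reference: the depth-first exploration gives a bijection between realisations of the tree and lattice paths with increments $\xi_i-1$, the event $\{\numberVertices{\gwt{1}}=k\}$ becomes a first-passage event for the associated walk, and the cycle lemma converts this into $\frac{1}{k}\prob{S_k=k-1}$. The Poisson computation is then immediate. Your two alternative routes (Lagrange inversion on $\phi(s)=s\,e^{\phi(s)-1}$, or the enumerative argument via Cayley's formula) are also valid; the Cayley route is in fact very close in spirit to how the paper uses \Cref{thm:total_progeny} together with \Cref{thm:cayley_gwt_tree}, since those two statements combined are equivalent to the weighted count of rooted labelled trees you describe.
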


\begin{thm}[\cite{continuum_random_tree}]\label{thm:cayley_gwt_tree}
Let $k\in\N$ be fixed, $T=T(k)$ the random tree on vertex set $[k]$, and $\gwt{1}$ the \GW\ tree with offspring distribution $\poisson{1}$. Then for each rooted graph $\detGraph$
\begin{align*}
\prob{\unbiased{T}\isomorphic\detGraph}=\condprob{\gwt{1}\isomorphic\detGraph}{\numberVertices{\gwt{1}}=k}.
\end{align*} 
\end{thm}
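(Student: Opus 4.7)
The plan is to verify the identity by direct computation of both sides for a fixed rooted graph $(H, r)$ on $k$ vertices, noting that both sides vanish trivially otherwise. The crucial combinatorial quantity appearing on both sides is $a := \lvert\mathrm{Aut}(H, r)\rvert$, the size of the root-preserving automorphism group of $H$ viewed as an unlabelled rooted tree; I will show that both sides equal $k!/(a \cdot k^{k-1})$.

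For the left side, Cayley's formula gives $k^{k-2}$ labelled trees on $[k]$, and together with a uniform choice of root this yields $k^{k-1}$ equiprobable rooted labelled trees. Counting bijections $V(H) \to [k]$ sending $r$ to the chosen root label and quotienting by $\mathrm{Aut}(H, r)$ shows that exactly $k!/a$ of these rooted labelled trees are isomorphic (as rooted graphs) to $(H, r)$. Hence $\prob{\unbiased{T} \isomorphic (H, r)} = k!/(a \cdot k^{k-1})$.

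For the right side, realise $\gwt{1}$ as a random plane tree in which each vertex independently has a $\poisson{1}$ number of children in a specified order. A specific plane tree with child-degree sequence $(d^+(v))_v$ then has probability $\prod_v e^{-1}/d^+(v)! = e^{-k}\prod_v 1/d^+(v)!$, using $\sum_v d^+(v) = k-1$. The number of distinct plane-tree embeddings of $(H, r)$ is $\prod_v d^+(v)!/a$, by orbit--stabiliser with $\mathrm{Aut}(H, r)$ acting freely on the set of orderings of children at each vertex. Summing yields $\prob{\gwt{1} \isomorphic (H, r)} = e^{-k}/a$, and dividing by $\prob{\numberVertices{\gwt{1}} = k} = e^{-k}k^{k-1}/k!$ from Theorem~\ref{thm:total_progeny} gives $k!/(a \cdot k^{k-1})$, matching the left side.

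The main obstacle is purely combinatorial bookkeeping: one needs to justify that $\mathrm{Aut}(H, r)$ acts freely on the set of orderings of children (so that orbit--stabiliser produces the claimed count of plane embeddings) and that the number of labellings on $[k]$ inducing the unlabelled rooted tree $H$ is $k!/a$. Both follow from the standard fact that a root-preserving automorphism of a tree is determined by its action layer by layer, and the factor $a$ ultimately cancels between the two sides without ever needing to be computed for any particular $H$.
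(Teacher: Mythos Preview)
The paper does not give its own proof of this statement; it is quoted as a known result from \cite{continuum_random_tree}. Your direct computation is correct and self-contained: both sides equal $k!/(a\,k^{k-1})$ with $a=\lvert\mathrm{Aut}(H,r)\rvert$, the count $k!/a$ of rooted labelled trees on $[k]$ of shape $(H,r)$ and the count $\prod_v d^{+}(v)!/a$ of plane embeddings of $(H,r)$ both follow from the free action of $\mathrm{Aut}(H,r)$ (on labellings and on child-orderings, respectively), and freeness on child-orderings is exactly the layer-by-layer argument you sketch.
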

\Cref{thm:cayley_gwt_tree} says that $T(k)$, considered as a rooted graph, is distributed like a \GW\ tree $\gwt{1}$ conditioned on the event that $\gwt{1}$ has $k$ vertices. Now we can determine the asymptotic behaviour of the components of the random forest $F(n,\ntrees)$.
\begin{lem}\label{lem:limit_trees_forest}
Let $\ntrees=\ntrees(n)=\smallo{n}$ be such that $t=\smallomega{1}$, $N\in\N$, $\detGraph_1, \ldots, \detGraph_N$ rooted graphs, and $\forest=\forest(n,\ntrees)\ur\forestClass(n,\ntrees)$. For $i\in[\ntrees]$ we denote by $F_i$ the tree component of $\forest$ containing vertex $i$. Then we have
\begin{align*}
	\prob{\forall i\in[N]:\rootedGraph{F_i}{i}\isomorphic \detGraph_i }=\left(1+\smallo{1}\right)\prod_{i=1}^{N}\prob{\gwt{1}\isomorphic\detGraph_i}.
\end{align*}
\end{lem}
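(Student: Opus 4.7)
The plan is to decompose by conditioning on the component sizes $\numberVertices{F_i}=k_i$ for $i\in[N]$, identify each rooted component with a size-conditioned Galton--Watson tree via \Cref{thm:cayley_gwt_tree}, and then remove the conditioning using \Cref{lem:number_vertices_gwt}. First I would observe that, since $\forest(n,\ntrees)$ is uniform in $\forestClass(n,\ntrees)$, conditional on $\numberVertices{F_i}=k_i$ for all $i\in[N]$ together with the vertex partition, the trees $F_1,\ldots,F_N$ are conditionally independent and each $F_i$ is a uniformly random labelled tree on its own vertex set. Since the rooted isomorphism type of $\rootedGraph{F_i}{i}$ depends only on $k_i$ by relabelling symmetry, it is distributed as $\unbiased{T(k_i)}$, where $T(k_i)$ is the uniformly random tree on $k_i$ labelled vertices. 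Hence \Cref{thm:cayley_gwt_tree} yields
\begin{align*}
\condprob{\forall i\in[N]:\rootedGraph{F_i}{i}\isomorphic \detGraph_i}{\forall i\in[N]:\numberVertices{F_i}=k_i}=\prod_{i=1}^{N}\condprob{\gwt{1}\isomorphic \detGraph_i}{\numberVertices{\gwt{1}}=k_i}.
\end{align*}

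Writing $p_i:=\prob{\gwt{1}\isomorphic \detGraph_i}$ and summing over $(k_1,\ldots,k_N)$, I obtain
\begin{align*}
\prob{\forall i\in[N]:\rootedGraph{F_i}{i}\isomorphic \detGraph_i}=\sum_{k_1,\ldots,k_N}\prob{\forall i:\numberVertices{F_i}=k_i}\prod_{i=1}^{N}\condprob{\gwt{1}\isomorphic \detGraph_i}{\numberVertices{\gwt{1}}=k_i},
\end{align*}
whereas the target $\prod_{i=1}^{N} p_i$ equals the same expression with $\prob{\forall i:\numberVertices{F_i}=k_i}$ replaced by $\prod_{i=1}^{N}\prob{\numberVertices{\gwt{1}}=k_i}$. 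By \Cref{lem:number_vertices_gwt} and \Cref{thm:total_progeny} these two weights agree up to a factor $1+\smallo{1}$, which would immediately conclude the argument were the error uniform in the $k_i$.

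The main obstacle is exactly that the $(1+\smallo{1})$ in \Cref{lem:number_vertices_gwt} is only uniform over bounded $k_i$: the factor $(1-K_N/n)^{n-K_N-\ntrees+N-1}\approx e^{-K_N}$ degrades once $K_N$ grows with $n$. To deal with this I would truncate. Fix $\varepsilon>0$; since $\gwt{1}$ is almost surely finite by \Cref{thm:total_progeny}, there is a constant $M=M(\varepsilon)$ with $\prob{\numberVertices{\gwt{1}}>M}<\varepsilon/N$. Split the sum over $(k_1,\ldots,k_N)$ into the good region $\{k_i\leq M\text{ for all }i\}$ and its complement. On the good region \Cref{lem:number_vertices_gwt} applies with a uniform $(1+\smallo{1})$ factor, and the good contribution factors as
\begin{align*}
(1+\smallo{1})\prod_{i=1}^{N}\sum_{k_i\leq M}\prob{\numberVertices{\gwt{1}}=k_i}\condprob{\gwt{1}\isomorphic \detGraph_i}{\numberVertices{\gwt{1}}=k_i}=(1+\smallo{1})\prod_{i=1}^{N}\left(p_i-\bigo{\varepsilon}\right).
\end{align*}
The complementary region is bounded above by $\sum_{i=1}^{N}\prob{\numberVertices{F_i}>M}$, and applying \Cref{lem:number_vertices_gwt} in the single-tree case $N=1$ with $M$ fixed gives $\prob{\numberVertices{F_i}>M}\to\prob{\numberVertices{\gwt{1}}>M}<\varepsilon/N$ for $n$ large, so this contribution is $\bigo{\varepsilon}$. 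Taking $n\to\infty$ and then $\varepsilon\to 0$ yields $\prob{\forall i:\rootedGraph{F_i}{i}\isomorphic \detGraph_i}=(1+\smallo{1})\prod_{i=1}^{N} p_i$, as required.
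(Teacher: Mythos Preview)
Your argument is correct and follows the same skeleton as the paper's proof (condition on the sizes, identify each $\rootedGraph{F_i}{i}$ with a size-conditioned $\gwt{1}$ via \Cref{thm:cayley_gwt_tree}, then use \Cref{lem:number_vertices_gwt} and \Cref{thm:total_progeny}). However, you introduce a summation over $(k_1,\ldots,k_N)$ and then a truncation step that are entirely unnecessary: the rooted graphs $\detGraph_1,\ldots,\detGraph_N$ are \emph{fixed}, so the event $\rootedGraph{F_i}{i}\isomorphic\detGraph_i$ forces $\numberVertices{F_i}=\numberVertices{\detGraph_i}=:k_i$, a fixed constant. Equivalently, in your sum the factor $\condprob{\gwt{1}\isomorphic\detGraph_i}{\numberVertices{\gwt{1}}=k_i}$ vanishes unless $k_i=\numberVertices{\detGraph_i}$, so only a single term survives. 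The paper simply writes
\[
\prob{\forall i:\rootedGraph{F_i}{i}\isomorphic \detGraph_i}=\prob{\forall i:\numberVertices{F_i}=k_i}\cdot\condprob{\forall i:\rootedGraph{F_i}{i}\isomorphic \detGraph_i}{\forall i:\numberVertices{F_i}=k_i}
\]
with these fixed $k_i$, and since the $k_i$ are bounded, \Cref{lem:number_vertices_gwt} applies directly with no uniformity issue. Your truncation is harmless but superfluous; recognising that the sum collapses is exactly what the paper exploits.
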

\begin{proof}
Let $k_1, \ldots, k_N$ be the number of vertices in the rooted graphs $\detGraph_1, \ldots, \detGraph_N$, respectively. Using \Cref{lem:number_vertices_gwt} we obtain
\begin{align}\label{eq:20}
\prob{\forall i\in[N]:\rootedGraph{F_i}{i}\isomorphic \detGraph_i}&=\prob{\forall i\in[N]:\numberVertices{F_i}=k_i}~\condprob{\forall i\in[N]:\rootedGraph{F_i}{i}\isomorphic \detGraph_i}{\forall i\in[N]:\numberVertices{F_i}=k_i}\nonumber
\\
&=\left(1+\smallo{1}\right)\left(\prod_{i=1}^{N}e^{-k_i}k_i^{k_i-1}/k_i!\right)~\condprob{\forall i\in[N]:\rootedGraph{F_i}{i}\isomorphic \detGraph_i}{\forall i\in[N]:\numberVertices{F_i}=k_i}.
\end{align}
Conditioned on the event $\left\{\forall i\in[N]:\numberVertices{F_i}=k_i\right\}$ the tree components $F_1, \ldots, F_N$ are independent and distributed like random trees $T(k_1), \ldots, T(k_N)$, respectively. Thus, we obtain by \Cref{thm:cayley_gwt_tree}
\begin{align}\label{eq:19}
\condprob{\forall i\in[N]:\rootedGraph{F_i}{i}\isomorphic \detGraph_i}{\forall i\in[N]:\numberVertices{F_i}=k_i}&=\prod_{i=1}^{N}\prob{\unbiased{T(k_i)}\isomorphic \detGraph_i}\nonumber
\\
&=\prod_{i=1}^{N}\condprob{\gwt{1}\isomorphic\detGraph_i}{\numberVertices{\gwt{1}}=k_i}\nonumber
\\
&=\prod_{i=1}^{N}\prob{\gwt{1}\isomorphic\detGraph_i}/\prob{\numberVertices{\gwt{1}}=k_i}.
\end{align}
Using \Cref{thm:total_progeny} in (\ref{eq:19}) we obtain
\begin{align*}
\condprob{\forall i\in[N]:\rootedGraph{F_i}{i}\isomorphic \detGraph_i}{\forall i\in[N]:\numberVertices{F_i}=k_i}=\prod_{i=1}^{N}\prob{\gwt{1}\isomorphic\detGraph_i}e^{k_i}k_i^{-k_i+1}k_i!.
\end{align*}
Plugging in that in (\ref{eq:20}) yields
\begin{align*}
\prob{\forall i\in[N]:\rootedGraph{F_i}{i}\isomorphic \detGraph_i}&=\left(1+\smallo{1}\right)\left(\prod_{i=1}^{N}e^{-k_i}k_i^{k_i-1}/k_i!\right)~\left(\prod_{i=1}^{N}\prob{\gwt{1}\isomorphic\detGraph_i}e^{k_i}k_i^{-k_i+1}k_i!\right)\\
&=\left(1+\smallo{1}\right)\prod_{i=1}^{N}\prob{\gwt{1}\isomorphic\detGraph_i},
\end{align*}
which finishes the proof.
\end{proof}

In \Cref{lem:core_local} we obtained the local structure of the core $\core{\planargraph}$ and in \Cref{lem:limit_trees_forest} the asymptotic behaviour of the trees which we attach to $\core{\planargraph}$. Combining these two statements we can prove \Cref{thm:local_core} by first considering the random complex part $Q(C,q)$ as defined in \Cref{def:random_complex} and then using the concept of conditional random rooted graphs.
\begin{lem}\label{lem:local_core_complex}
	For each $n\in\N$, let $C=C(n)$ be a core with kernel $K=\kernel{C}$, $q=q(n)\in\N$, and $Q=Q(C,q)$ be the random complex part with core $C$ and vertex set $[q]$. Suppose that $\numberEdges{K}=\smallo{\numberVertices{C}}$, $\numberVertices{C}=\smallo{q}$, $\distributionalLimit{\unbiased{C}}{\infinitepath{2}}$, and $\distributionalLimit{C_K}{\infinitepath{3}}$. Then we have $\distributionalLimit{Q_C}{\skeletonTreeRays{2}}$ and $\distributionalLimit{Q_K}{\skeletonTreeRays{3}}$.
\end{lem}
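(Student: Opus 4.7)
My plan is to reduce both statements to a common decomposition of the ball in $Q$ around a core vertex and then combine the two input limits via conditioning. Recall that $Q=Q(C,q)$ is built from a uniform random forest $F=F(q,\numberVertices{C})\in\forestClass(q,\numberVertices{C})$ by identifying the $i$-th root of $F$ with the $i$-th core vertex, so each tree component $T_v$ meets the rest of $Q$ only at its root $v\in\vertexSet{C}$. Consequently every shortest path in $Q$ between two core vertices lies entirely in $C$, and for every $r\in\vertexSet{C}$ and $\radius\in\N$ we obtain
\[
\ball{\radius}{Q}{r}\;=\;\ball{\radius}{C}{r}\,\cup\,\bigcup_{v\in\vertexSet{\ball{\radius}{C}{r}}}\ball{\radius-\distance{C}{r}{v}}{T_v}{v}.
\]
This is exactly the shape of $\ball{\radius}{\skeletonTreeRays{k}}{\root}$ for $k\in\{2,3\}$: a finite ball in $\infinitepath{k}$ together with an independent truncated $\gwt{1}$ rooted at each of its vertices.

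For $Q_C$ I would fix a rooted target $\detGraph$ and $\radius\in\N$ and split $\prob{\ball{\radius}{Q}{r_C}\isomorphic\detGraph}$ by total probability according to the isomorphism type $B'$ of the core ball $\ball{\radius}{C}{r_C}$. The hypothesis $\distributionalLimit{\unbiased{C}}{\infinitepath{2}}$ concentrates the mass asymptotically on $B'=\ball{\radius}{\infinitepath{2}}{\root}$, a path on $2\radius+1$ vertices; on this event $\ball{\radius}{Q}{r_C}$ is determined by the $2\radius+1$ attached trees, which by \Cref{lem:limit_trees_forest} are asymptotically independent copies of $\gwt{1}$. Multiplying the two factors produces $\prob{\ball{\radius}{\skeletonTreeRays{2}}{\root}\isomorphic\detGraph}$, which is the desired limit. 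The argument for $\distributionalLimit{Q_K}{\skeletonTreeRays{3}}$ is word-for-word the same after replacing $\unbiased{C}$ by $C_K$, $\infinitepath{2}$ by $\infinitepath{3}$, and drawing the root from $\vertexSet{K}\subseteq\vertexSet{C}$; the decomposition still applies because every kernel vertex is a core vertex, hence the root of some tree of $F$.

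The step I expect to be the most delicate is splicing the two sources of randomness cleanly. \Cref{lem:limit_trees_forest} is stated for the \emph{specific} core vertices $1,\ldots,N$, whereas in our application the $N=2\radius+1$ (or $3\radius+1$) core vertices inside $\ball{\radius}{C}{r_C}$ are random and depend on $r_C$. I would handle this by invoking the symmetry of the law of $F$ on $\forestClass(q,\numberVertices{C})$ under permutations of the root label set $[\numberVertices{C}]$: for any distinct $v_1,\ldots,v_N\in\vertexSet{C}$ the joint law of $(T_{v_1},\ldots,T_{v_N})$ equals the joint law of $(T_1,\ldots,T_N)$, so \Cref{lem:limit_trees_forest} transfers verbatim. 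Combined with the observation that, by the local limit on $C$, only a bounded list of isomorphism types $B'$ contributes more than $\smallo{1}$ to the sum, the whole argument collapses to a routine finite summation of products of the two factors.
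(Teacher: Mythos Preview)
Your proposal is correct and follows essentially the same route as the paper: decompose $\ball{\radius}{Q}{r}$ into the core ball $\ball{\radius}{C}{r}$ with attached tree pieces, use the assumed local limits $\distributionalLimit{\unbiased{C}}{\infinitepath{2}}$ and $\distributionalLimit{C_K}{\infinitepath{3}}$ to pin down the shape of the core ball whp, and then apply \Cref{lem:limit_trees_forest} to the bounded number of attached trees. Your treatment is in fact slightly more explicit than the paper's, which glosses over the permutation-symmetry argument you spell out for transferring \Cref{lem:limit_trees_forest} from the fixed roots $1,\ldots,N$ to the random core vertices in $\ball{\radius}{C}{r_C}$; the only small addition you should make is to note (as the paper does) that $\numberEdges{K}=\smallo{\numberVertices{C}}$ forces $\numberVertices{C}=\smallomega{1}$, which is needed to invoke \Cref{lem:limit_trees_forest}.
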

\begin{proof}
We assume that $Q_C=\rootedGraph{Q}{r_C}$, i.e. $r_C\ur\vertexSet{C}$, and let $\radius\in\N$. We recall that $Q$ can be constructed by choosing a random forest $\forest=\forest(q, \numberVertices{C})$ and replacing each vertex $v$ in $C$ by the tree component of $\forest$ rooted at $v$. We can assume that we first pick the root $\root_C$ and then independently the forest $F$. Then $\ball{\radius}{Q}{\root_C}$ depends only on those trees of $F$ which are attached to vertices in $\ball{\radius}{C}{\root_C}$. Due to the assumption $\distributionalLimit{Q_C}{\skeletonTreeRays{2}}$ \whp\ $\ball{\radius}{C}{\root_C}$ is a graph consisting of two paths of length $\radius$ starting at $\root_C$. If this is the case, then $\numberVertices{\ball{\radius}{C}{\root_C}}=2\radius+1$ and we can apply \Cref{lem:limit_trees_forest} to the $2\radius+1$ trees attached to the vertices in 
$\vertexSet{\ball{\radius}{C}{\root_C}}$, since $\numberVertices{C}=\smallo{q}$ and the assumption $\numberEdges{\kernel{C}}=\smallo{\numberVertices{C}}$ implies $\numberVertices{C}=\smallomega{1}$. Hence, these trees behave asymptotically like independent \GW\ trees $\gwt{1}$ with offspring distribution $\poisson{1}$, which implies $\distributionalLimit{Q_C}{\skeletonTreeRays{2}}$. The second statement $\distributionalLimit{Q_K}{\skeletonTreeRays{3}}$ follows analogously.
\end{proof}

\proofofW{thm:local_core}
In order to use \Cref{lem:conditional}, let $\cl(n)$ be the subclass of $\planarclass(n,m)$ containing those graphs $H$ satisfying $\numberEdges{\kernel{H}}=\smallo{\numberVertices{\core{H}}}$, $\numberVertices{\core{H}}=\smallo{\numberVertices{\complexpart{H}}}$, and $\distributionalLimit{\unbiased{\core{H}}}{\infinitepath{2}}$. Due to \Cref{thm:internal_structure}\ref{thm:internal_structure1}, \ref{thm:internal_structure2} and \Cref{lem:core_local} we have \whp\ $\planargraph\in \cl:=\cup_n\cl(n)$. Let $A_C=A_C(n)$ be the random rooted graph where we first pick $A=A(n)\ur\cl(n)$ and given the realisation $H$ of $A$, the root is chosen uniformly at random from the vertex set of the core $\core{H}$. We define the function $\func$ such that $\func(H)=\left(\core{H}, \numberVertices{\complexpart{H}}\right)$ for each $H\in\cl$ and let $\seq=\left(C_n, q_n\right)_{n\in\N}$ be a sequence that is feasible for $\left(\cl, \func\right)$. The local structure of $\condGraph{A_C}{\seq}$ is distributed like that of $Q(C_n, q_n)_{C_n}$, which is the graph $Q(C_n, q_n)$, as defined in \Cref{def:random_complex}, rooted at a randomly chosen vertex from the core $C_n$. Hence, we obtain $\distributionalLimit{\condGraph{A_C}{\seq}}{\skeletonTreeRays{2}}$ by \Cref{lem:local_core_complex}, which implies  $\distributionalLimit{A_C}{\skeletonTreeRays{2}}$ due to \Cref{lem:conditional}. Using \Cref{lem:contiguous2}\ref{lem:contiguous2A} and the fact that \whp\ $\planargraph\in \cl$ we obtain $\contiguous{\planargraph_C}{A_C}$. Thus, we have $\distributionalLimit{\planargraph_C}{\skeletonTreeRays{2}}$ by \Cref{lem:contiguous1}. The assertion $\distributionalLimit{\planargraph_K}{\skeletonTreeRays{3}}$ can be shown analogously. \qed

\section{Discussion}\label{sec:discussion}
The only properties of $\planargraph(n,m)$ used in our proofs are the statements on the internal structure from \Cref{thm:internal_structure}. Kang, Moßhammer, and Sprüssel \cite{surface} proved that \Cref{thm:internal_structure} holds also for many other graph classes, including cactus graphs, series--parallel graphs, and graphs embeddable on an orientable surface of genus $g\in\N_0$ (see also \cite[Section 4]{cycles}). Combining the generalised version of \Cref{thm:internal_structure} and analogous proofs of \Cref{thm:main1,thm:main2,thm:main3,thm:main4,thm:local_core}, one can show the following result, which answers a question of Kang, Moßhammer, and Sprüssel \cite[Question 8.7]{surface}.
\begin{thm}
	\Cref{thm:main1,thm:main2,thm:main3,thm:main4,thm:local_core} are also true for the class of cactus graphs, the class of series--parallel graphs, and the class of graphs embeddable on an orientable surface of genus $g\in \N_0$.
\end{thm}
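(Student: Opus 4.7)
My plan is to observe that the entire argument in \Cref{sec:local_er_non_complex,sec:local_complex,sec:proofs,sec:local_core_kernel} is class--agnostic: the only place where planarity is genuinely used is \Cref{thm:internal_structure}, and every other ingredient is a generic combinatorial statement about random forests, trees, cores, and kernels (\Cref{lem:limit_forest,lem:subdivision_number,lem:random_core_local,lem:limit_trees_forest}), about the \ER{} random graph and about $U(n,m)$ (\Cref{lem:local_er,coro:local_no_complex}), or an abstract transfer lemma for random rooted graphs (\Cref{lem:linear_combination,lem:conditional,lem:contiguous1,lem:contiguous2}). Accordingly, the proof splits naturally into two parts: first, quote from \cite{surface} the generalised form of \Cref{thm:internal_structure} for each of the three target classes; second, rerun the proofs of \Cref{thm:main1,thm:main2,thm:main3,thm:main4,thm:local_core} with $\planarclass$ replaced by the target class $\mathcal{A}$.

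For \Cref{thm:main1} a small separate step is required. The original proof used that $\liminf \prob{G(n,m)\in\planarclass}>0$, which followed from \Cref{thm:non_complex} because every graph without complex components is planar. For the generalisation I would verify that cactus graphs, series--parallel graphs, and graphs embeddable on an orientable surface of fixed genus $g$ all contain every graph with no complex component (indeed, they contain all forests and all unicyclic graphs, which exhausts the non--complex graphs). Consequently $\liminf \prob{G(n,m)\in\mathcal{A}}>0$ in the subcritical regime, and the verbatim Britikov--style transfer yields the \GW{} tree as the local limit of $A(n,m)\ur\mathcal{A}(n,m)$. For \Cref{thm:main2,thm:main3,thm:main4,thm:local_core} the proofs of \Cref{sub:main_proofs} and \Cref{sec:local_core_kernel} go through unchanged, invoking the generalised \Cref{thm:internal_structure} at exactly the same places \ref{thm:internal_structure1}--\ref{thm:internal_structure9} as in the present text, and applying \Cref{lem:local_complex,lem:local_non_complex,lem:linear_combination} as before.

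The main obstacle, and the only non--automatic step, is to ensure that \Cref{lem:local_random_complex,lem:local_core_complex} remain valid when we restrict to graphs in $\mathcal{A}$. These lemmas implicitly rely on the fact that, conditional on its core $C$ and on its vertex count $q$, the complex part of the random graph is uniformly distributed over all complex graphs on $[q]$ with core $C$. This requires the class $\mathcal{A}$ to be closed under the operation of attaching rooted forests to the vertices of a core, and conversely every such attachment must yield a graph in $\mathcal{A}$. Both properties are easy to verify for cactus graphs (pendant--tree attachment preserves the cactus property), for series--parallel graphs (tree attachment preserves series--parallel decomposability), and for graphs of genus at most $g$ (a rooted tree can be drawn in any face incident to the attachment vertex without increasing the genus). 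Once this closure property is recorded, the random complex part conditioned on its core and vertex count is indeed uniform in the appropriate subclass, and the transfer of \Cref{lem:local_random_complex,lem:local_core_complex,lem:local_complex} is complete; combining these with the generalised \Cref{thm:internal_structure} yields the theorem.
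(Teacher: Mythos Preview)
Your proposal is correct and follows the same approach as the paper, which simply states that the only class--specific input is \Cref{thm:internal_structure} (generalised in \cite{surface}) and that the remaining proofs go through verbatim. You are in fact more thorough than the paper: you explicitly isolate the two implicit closure properties---that every non--complex graph lies in $\mathcal{A}$ (needed both for \Cref{thm:main1} via \Cref{thm:non_complex} and for the conditioning in \Cref{lem:local_non_complex}), and that attaching rooted forests to a core in $\mathcal{A}$ stays in $\mathcal{A}$ (needed so that the complex part conditioned on its core is genuinely distributed as $Q(C,q)$ in \Cref{lem:local_complex,lem:local_core_complex})---which the paper leaves tacit.
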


\bibliographystyle{plain}
\bibliography{kang-missethan-local-planar}
\newpage
\appendix
\section{Proof of \Cref{lem:local_er}}\label{app:local_er}
In order to show \Cref{lem:local_er} we will consider plane trees, i.e. rooted unlabelled trees in which the children of each vertex are distinguishable. For instance, we can think that such a tree is embedded in the plane and the children of a vertex are ordered from \lq left\rq\ to \lq right\rq. We note that a rooted labelled tree $\rootedGraph{H}{\root}$ corresponds to a plane tree $\planeTree$ in a canonical way: The root of $\planeTree$ is $\root$ and the children of a vertex are ordered according to their labels in $\rootedGraph{H}{\root}$, e.g. in ascending order. Finally, we \lq forget\rq\ the labels in $H$ to obtain $\planeTree$. We refer to \Cref{fig:plane_tree} for an illustration of connecting a rooted labelled tree to a plane tree. Given a plane tree $\planeTree$, a rooted graph $\rootedGraph{H}{\root}$, and $\radius\in\N$, we will write in the following $\ball{\radius}{H}{\root}=\planeTree$ to express that $\ball{\radius}{H}{\root}$ is a tree and the corresponding plane tree equals $\planeTree$. 

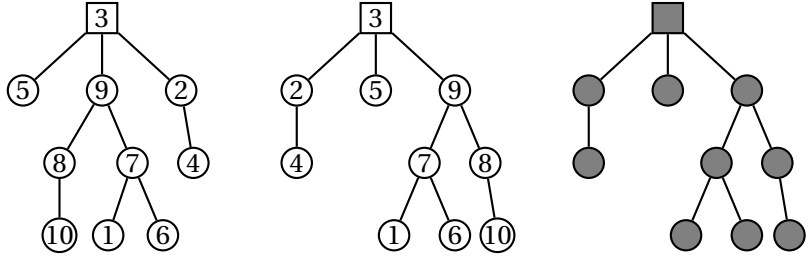
\begin{figure}[t]
	\centering
	\begin{tikzpicture}[thick, every node/.style={circle, inner sep=0, minimum size=0.4cm}, scale=0.8]
		
		\node (A) at (0,0) [draw,rectangle]  {3};
		\node (B) at (-1.3,-1.2) [draw]  {5};
		\node (C) at (0,-1.2) [draw]  {9};
		\node (D) at (1.3,-1.2) [draw]  {2};
		\node (E) at (-0.7,-2.4) [draw]  {8};
		\node (F) at (0.5,-2.4) [draw]  {7};
		\node (G) at (1.5,-2.4) [draw]  {4};
		\node (H) at (-0.7,-3.6) [draw]  {10};
		\node (I) at (0.1,-3.6) [draw]  {1};
		\node (J) at (1,-3.6) [draw]  {6};
		\draw[-] (A) to (B);
		\draw[-] (A) to (C);
		\draw[-] (A) to (D);
		\draw[-] (C) to (E);
		\draw[-] (C) to (F);
		\draw[-] (D) to (G);
		\draw[-] (E) to (H);
		\draw[-] (F) to (I);
		\draw[-] (F) to (J);
				
		\node (A1) at (4.5,0) [draw,rectangle]  {3};
		\node (B1) at (4.5,-1.2) [draw]  {5};
		\node (C1) at (5.8,-1.2) [draw]  {9};
		\node (D1) at (3.2,-1.2) [draw]  {2};
		\node (E1) at (6.3,-2.4) [draw]  {8};
		\node (F1) at (5.3,-2.4) [draw]  {7};
		\node (G1) at (3.2,-2.4) [draw]  {4};
		\node (H1) at (6.5,-3.6) [draw]  {10};
		\node (I1) at (4.8,-3.6) [draw]  {1};
		\node (J1) at (5.8,-3.6) [draw]  {6};
		\draw[-] (A1) to (B1);
		\draw[-] (A1) to (C1);
		\draw[-] (A1) to (D1);
		\draw[-] (C1) to (E1);
		\draw[-] (C1) to (F1);
		\draw[-] (D1) to (G1);
		\draw[-] (E1) to (H1);
		\draw[-] (F1) to (I1);
		\draw[-] (F1) to (J1);
		
		\node (A2) at (9.3,0) [draw,fill=gray,rectangle]  {};
		\node (B2) at (9.3,-1.2) [draw,fill=gray]  {};
		\node (C2) at (10.6,-1.2) [draw,fill=gray]  {};
		\node (D2) at (8,-1.2) [draw,fill=gray]  {};
		\node (E2) at (11.1,-2.4) [draw,fill=gray]  {};
		\node (F2) at (10.1,-2.4) [draw,fill=gray]  {};
		\node (G2) at (8,-2.4) [draw,fill=gray]  {};
		\node (H2) at (11.3,-3.6) [draw,fill=gray]  {};
		\node (I2) at (9.6,-3.6) [draw,fill=gray]  {};
		\node (J2) at (10.6,-3.6) [draw,fill=gray]  {};
		\draw[-] (A2) to (B2);
		\draw[-] (A2) to (C2);
		\draw[-] (A2) to (D2);
		\draw[-] (C2) to (E2);
		\draw[-] (C2) to (F2);
		\draw[-] (D2) to (G2);
		\draw[-] (E2) to (H2);
		\draw[-] (F2) to (I2);
		\draw[-] (F2) to (J2);
	\end{tikzpicture} 
	\caption{A rooted labelled tree $H$ with root $3$ on the left--hand side together with the corresponding plane tree on the right--hand side. The tree in the middle reflects $H$ after ordering the children of each vertex according to their labels.}
	\label{fig:plane_tree}
\end{figure}

A \GW\ tree $\gwt{c}$ with offspring distribution $\poisson{c}$ can also be considered as a plane tree if we build $\gwt{c}$ via a branching process: We start by revealing the number of children of the root $v_0$ by drawing a random variable $X_0$ according to the $\poisson{c}$--distribution. Then we let $v_1, \ldots, v_{X_0}$ be the children of $v_0$ and determine the number of children of $v_1, \ldots, v_{X_0}$ one after another by drawing i.i.d. $\poisson{c}$--distributed random variables. Next, we sequentially choose the number of children of these new vertices and so on. Now it suffices to show the following modified version of \Cref{lem:local_er}. Roughly speaking, it is easier to show \Cref{lem:local_er2} than proving \Cref{lem:local_er} directly, because considering plane trees breaks some possible symmetries in a rooted unlabelled tree. 
\begin{lem}\label{lem:local_er2}
	Let $m=m(n)=\alpha n/2$ where $\alpha=\alpha(n)$ tends to a constant $c\geq 0$, $\planeTree$ be a plane tree, $\radius\in\N$, and $G=G(n,m)\ur\erClass(n,m)$. Then \whp
	\begin{align*}
		n^{-1}\cdot\Big|\setbuilderBig{v\in\vertexSet{G}}{\ball{\radius}{G}{v}=\planeTree}\Big|=\left(1+\smallo{1}\right)\prob{\ballP{\radius}{\gwt{c}}=\planeTree},
	\end{align*}
where the \GW\ tree $\gwt{c}$ is considered as a plane tree.
\end{lem}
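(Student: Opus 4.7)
The plan is to apply the first/second moment method to $X := \Big|\setbuilderBig{v\in\vertexSet{G}}{\ball{\radius}{G}{v}=\planeTree}\Big|$. Setting $p := \prob{\ballP{\radius}{\gwt{c}}=\planeTree}$, Chebyshev's inequality reduces the target $X=(1+\smallo{1})pn$ \whp\ to verifying $\expec{X} = (1+\smallo{1})pn$ and $\variance{X} = \smallo{n^2}$. Throughout I write $k := \numberVertices{\planeTree}$, let $e_I$ denote the number of vertices of $\planeTree$ at depth strictly less than $\radius$, and let $d(w)$ denote the number of children of $w\in\vertexSet{\planeTree}$.

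For the first moment, fix $v$ and decompose $\prob{\ball{\radius}{G}{v}=\planeTree}$ as a sum over label sets $S\subseteq[n]\setminus\{v\}$ of size $k-1$ and rooted labelled trees $T$ on $\{v\}\cup S$ with root $v$ whose canonical plane tree equals $\planeTree$. A standard counting argument -- labelled trees with a fixed underlying rooted unlabelled shape $H$ are uniformly distributed among the plane trees of shape $H$, and the ratio of the two counts $(k-1)!/|\mathrm{Aut}(H)|$ and $\prod_w d(w)!/|\mathrm{Aut}(H)|$ is $(k-1)!/\prod_w d(w)!$ -- shows that for each fixed $S$ there are exactly $(k-1)!/\prod_w d(w)!$ such $T$. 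For each $T$, the event $\ball{\radius}{G(n,m)}{v}=T$ requires the $k-1$ tree edges present, the $\binom{k}{2}-(k-1)$ non-tree pairs within $V(T)$ absent, and the $e_I(n-k)$ pairs between $V_I(T)$ and $[n]\setminus V(T)$ absent (otherwise extra vertices enter the ball), giving the hypergeometric probability
\begin{align*}
\binom{\binom{n}{2}-\binom{k}{2}-e_I(n-k)}{m-(k-1)}\bigg/\binom{\binom{n}{2}}{m} \;=\; (1+\smallo{1})(c/n)^{k-1}e^{-ce_I},
\end{align*}
using $2m/n\to c$. Combining the three factors ($n$ choices of $v$; $\binom{n-1}{k-1}(k-1)!/\prod_w d(w)!$ labelled trees per $v$; the per-tree probability) and applying the identity $\sum_{w\in V_I}d(w)=k-1$ yields
\begin{align*}
\expec{X} \;=\; (1+\smallo{1})n\cdot\frac{c^{k-1}e^{-ce_I}}{\prod_w d(w)!} \;=\; (1+\smallo{1})n\cdot\prod_{w\in V_I}\frac{e^{-c}c^{d(w)}}{d(w)!} \;=\; (1+\smallo{1})pn,
\end{align*}
the last equality being the branching-process formula for $p$.

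For the second moment, split ordered pairs $(u,v)$ with $u\neq v$ in $\expec{X^2}=\sum_{u,v}\prob{\ball{\radius}{G}{u}=\planeTree,\,\ball{\radius}{G}{v}=\planeTree}$ according to whether $\distance{G}{u}{v}>2\radius$ (\emph{far}) or $\leq 2\radius$ (\emph{close}). A separate easy first-moment bound, counting walks of length at most $2\radius$ in $G$ from a fixed vertex, shows the expected number of vertices within $G$-distance $2\radius$ of a given vertex is $\bigo{1}$; hence $\sum_{u\neq v}\prob{\distance{G}{u}{v}\leq 2\radius}=\bigo{n}$, and the close pairs contribute at most $\bigo{n}=\smallo{\expec{X}^2}$. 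For far pairs the two vertex sets $V(\ball{\radius}{G}{u})$ and $V(\ball{\radius}{G}{v})$ are disjoint, the defining edge-constraints of the two events agree on any overlap (both forbid the same crossing edges), and the same hypergeometric analysis applied to the two balls simultaneously gives joint probability $(1+\smallo{1})p^2$; summed over the $(1+\smallo{1})n^2$ far ordered pairs this contributes $(1+\smallo{1})\expec{X}^2$. Hence $\variance{X}=\smallo{\expec{X}^2}$ and Chebyshev's inequality concludes.

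The principal obstacle is the combinatorial bookkeeping in the first moment: the factor $(k-1)!/\prod_w d(w)!$ counting labelled trees on the random-graph side must cancel precisely against the $\prod_w d(w)!$ in the denominators of the Galton--Watson Poisson weights, leaving the clean $c^{k-1}e^{-ce_I}/\prod_w d(w)!$. This matching is exactly why the statement is formulated using plane trees -- which carry no nontrivial plane-tree automorphisms -- rather than rooted unlabelled trees, whose automorphism groups $|\mathrm{Aut}(H)|$ would have to be tracked through the computation. Everything else is routine; alternatively one may bypass the $G(n,m)$ hypergeometric asymptotics by passing to $G(n,p)$ with $p=m/\binom{n}{2}$ via standard contiguity, after which the independence of edges in $G(n,p)$ makes the factorisation of joint probabilities on disjoint edge sets immediate.
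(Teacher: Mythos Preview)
Your proof is correct and follows essentially the same first/second moment approach as the paper. The only cosmetic difference is in the first-moment count: you pick the full label set $S$ and then use an automorphism argument to obtain the factor $(k-1)!/\prod_w d(w)!$, whereas the paper builds the ball layer by layer, choosing the neighbour set of each internal vertex in turn and getting $\prod_i \binom{n-1-D_{i-1}}{d_i}$ directly; both routes yield the same $(1+o(1))n^{v(Z)-1}/\prod_w d(w)!$, and the remaining hypergeometric and second-moment computations match the paper's.
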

\begin{proof}
	To simplify notation, we set for $v\in\vertexSet{G}$
\begin{align*}
X_v:=
\begin{cases}
	1  & \text{~~if~~} \ball{\radius}{G}{v}=\planeTree,\\
	0  & \text{~~otherwise~~}.
\end{cases}	
\end{align*}
We also define $X:=\sum_{v}X_v$ and $\gamma:=\prob{\ballP{\radius}{\gwt{c}}=\planeTree}$. Using these notations we can rewrite the statement as \whp\ $X=\left(1+\smallo{1}\right)\gamma n$. Let $v_1, \ldots, v_{\numberVertices{\planeTree}}$ be the vertices of $\planeTree$ in the order they are found by a breadth--first algorithm which starts at the root of $\planeTree$ and chooses the order of the children according to their order in $\planeTree$. If $\distance{\planeTree}{v_1}{v_{\numberVertices{\planeTree}}}>\radius$, then we obviously have $X=\gamma=0$. Thus, we can restrict our considerations to the case $\distance{\planeTree}{v_1}{v_{\numberVertices{\planeTree}}}\leq\radius$. Now let $k\in[{\numberVertices{\planeTree}}]$ be such that $\distance{\planeTree}{v_1}{v_i}<\radius$ for all $i\leq k$ and $\distance{\planeTree}{v_1}{v_i}=\radius$ if $i>k$. For each $i\in[k]$ we denote by $d_i$ the number of children of $v_i$ in $\planeTree$, i.e. $d_1=\degree{v_1}{\planeTree}$ and $d_i=\degree{v_i}{\planeTree}-1$ for $i\geq 2$, where $\degree{v_i}{\planeTree}$ is the degree of $v_i$ in $\planeTree$. By revealing the number of children of each vertex in the \GW\ tree $\gwt{c}$ sequentially, we obtain
\begin{align}\label{eq:16}
\gamma=\prod_{i=1}^{k}\frac{e^{-c}c^{d_i}}{d_i!}=e^{-ck}c^{\numberVertices{\planeTree}-1}\prod_{i=1}^{k}\frac{1}{d_i!},
\end{align}
where we used in the last equality that $\sum_{i=1}^{k}d_i=\numberVertices{\planeTree}-1$.

In the next step, we show \whp\ $X=\left(1+\smallo{1}\right)\gamma n$ by the first and second moment method. To that end, it suffices to prove $\prob{X_v=1}=\left(1+\smallo{1}\right)\gamma$ for each $v\in\vertexSet{G}$ and $\prob{X_v=X_w=1}=\left(1+\smallo{1}\right)\gamma^2$ for all $v\neq w$. For the former one we fix a vertex $v\in[\vertexSet{G}]$ and estimate the number of graphs $H\in \mathcal{G}(n,m)$ with $\ball{\radius}{H}{v}=\planeTree$. Each such graph can be constructed by first creating $\ball{\radius}{H}{v}$ and then choosing the edges outside $\ball{\radius}{H}{v}$. To obtain $\ball{\radius}{H}{v}$ we choose sets of neighbours one after another. More precisely, we start by picking a subset $S_1\subseteq[n]\setminus\left\{v\right\}$ of size $d_1$ for the neighbourhood of $v$. Then we choose $S_2\subseteq[n]\setminus\left(\left\{v\right\}\cup S_1\right)$ of size $d_2$ for the neighbours of the next vertex and so on. We note that given the set of neighbours for a vertex the ordering of them in the plane tree $\ball{\radius}{H}{v}$ is already determined by the labels of the neighbours. Hence, the number of different ways of choosing the sets $S_1, \ldots, S_k$ of neighbours and thereby building $\ball{\radius}{H}{v}$ such that $\ball{\radius}{H}{v}=\planeTree$ is
\begin{align}\label{eq:14}
\prod_{i=1}^{k} \binom{n-1-D_{i-1}}{d_i}=\left(1+\smallo{1}\right)n^{\numberVertices{\planeTree}-1}\prod_{i=1}^{k}\frac{1}{d_i!},
\end{align}
where $D_i:=\sum_{j=1}^{i}d_{j}$ and $D_k=\numberVertices{\planeTree}-1$. Now we assume that $B=\ball{\radius}{H}{v}$ is already built and we want to choose the set $E$ of edges outside $B$. Let $B'$ be the set of vertices $w\in\vertexSet{B}$ with $\distance{B}{v}{w}<\radius$. We note that $|B'|=k$ and the endpoints of the edges in $E$ lie in $[n]\setminus B'$ but not both endpoints of an edge can lie in $\vertexSet{B}\setminus B'$.  
Thus, the number of possible choices of $E$ is
\begin{align}\label{eq:15}
\binom{\binom{n-k}{2}-\binom{\numberVertices{\planeTree}-k}{2}}{m-\numberVertices{\planeTree}+1}=\left(1+\smallo{1}\right)e^{-ck}\left(c/n\right)^{\numberVertices{\planeTree}-1}\binom{\binom{n}{2}}{m},
\end{align}
where the equality follows by a standard asymptotic computation. Combining (\ref{eq:14}) and (\ref{eq:15}) yields
\begin{align}
\Big|\setbuilder{H\in\mathcal{G}(n,m)}{\ball{\radius}{H}{v}=\planeTree}\Big|=\left(1+\smallo{1}\right)e^{-ck}c^{\numberVertices{\planeTree}-1}\binom{\binom{n}{2}}{m}\prod_{i=1}^{k}\frac{1}{d_i!}.
\end{align}
Together with (\ref{eq:16}) this implies
\begin{align}\label{eq:17}
\prob{X_v=1}=\frac{\left|\setbuilder{H\in\mathcal{G}(n,m)}{\ball{\radius}{H}{v}=\planeTree}\right|}{\left|\mathcal{G}(n,m)\right|}=\left(1+\smallo{1}\right)\gamma.
\end{align}

In the next step, we show $\prob{X_v=X_w=1}=\left(1+\smallo{1}\right)\gamma^2$ for all $v\neq w$. To that end, we start by estimating the number of graphs $H\in\mathcal{G}(n,m)$ which satisfy $\ball{\radius}{H}{v}=\ball{\radius}{H}{w}=\planeTree$ and $\ball{\radius}{H}{v}\cap\ball{\radius}{H}{w}=\emptyset$, i.e. $\ball{\radius}{H}{v}$ and $\ball{\radius}{H}{w}$ share no vertex and considered as plane trees they coincide both with $\planeTree$. Each such graph can be constructed by choosing sequentially $\ball{\radius}{H}{v}$, $\ball{\radius}{H}{w}$, and then the remaining edges. Analogously to (\ref{eq:14}) and (\ref{eq:15}), the possible number of doing that is
\begin{align}\label{eq:18}
&\Big|\setbuilder{H\in\mathcal{G}(n,m)}{\ball{\radius}{H}{v}=\ball{\radius}{H}{w}=\planeTree, \ball{\radius}{H}{v}\cap\ball{\radius}{H}{w}=\emptyset}\Big|\nonumber
\\&=\prod_{i=1}^{k} \binom{n-2-D_{i-1}}{d_i}\cdot \prod_{i=1}^{k} \binom{n-1-\numberVertices{\planeTree}-D_{i-1}}{d_i}\cdot\binom{\binom{n-2k}{2}-2\binom{\numberVertices{\planeTree}-k}{2}}{m-2\numberVertices{\planeTree}+2}\nonumber
\\
&=\left(1+\smallo{1}\right)\gamma^2\binom{\binom{n}{2}}{m}.
\end{align}
Next, we prove that the number of graphs $H\in\mathcal{G}(n,m)$ with $\ball{\radius}{H}{v}\cap\ball{\radius}{H}{w}\neq\emptyset$ is \lq small\rq, which will imply together with (\ref{eq:18}) that $\prob{X_v=X_w=1}=\left(1+\smallo{1}\right)\gamma^2$. To that end, let $i\in\N$ be fixed. Each graph $H\in\mathcal{G}(n,m)$ with $\distance{H}{v}{w}=i$ can be constructed (not necessarily in a unique way) by choosing first a path of length $i$ from $v$ to $w$ and then picking the $m-i$ remaining edges. Hence, we obtain
\begin{align*}
\Big|\setbuilder{H\in\mathcal{G}(n,m)}{\distance{H}{v}{w}=i}\Big|\leq (n-2)(n-3)\ldots(n-i)\binom{\binom{n}{2}-i+1}{m-i}=\bigo{n^{-1}}\binom{\binom{n}{2}}{m}.
\end{align*}
In particular, this implies
\begin{align*}
	\Big|\setbuilder{H\in\mathcal{G}(n,m)}{\ball{\radius}{H}{v}\cap\ball{\radius}{H}{w}\neq\emptyset}\Big|=\bigo{n^{-1}}\binom{\binom{n}{2}}{m}.
\end{align*}
Combining that with (\ref{eq:18}) yields
\begin{align*}
\Big|\setbuilder{H\in\mathcal{G}(n,m)}{\ball{\radius}{H}{v}=\ball{\radius}{H}{w}=\planeTree}\Big|=\left(1+\smallo{1}\right)\gamma^2\binom{\binom{n}{2}}{m}.
\end{align*}
Finally, this implies
\begin{align}\label{eq:1}
\prob{X_v=X_w=1}=
\frac{\left|\setbuilder{H\in\mathcal{G}(n,m)}{\ball{\radius}{H}{v}=\ball{\radius}{H}{w}=\planeTree}\right|}{\left|\mathcal{G}(n,m)\right|}=\left(1+\smallo{1}\right)\gamma^2.
\end{align}
Thus, the statement follows by (\ref{eq:17}), (\ref{eq:1}), and the first and second moment method.
\end{proof}

\end{document}